\documentclass{amsart}
\usepackage{amsmath}
\usepackage{amssymb}
\usepackage{amsthm}
\usepackage{a4wide}
\usepackage{extarrows}
\usepackage{url}
\usepackage{xcolor}
\usepackage[british]{babel}

\newtheorem{theorem}{Theorem}[section]
\newtheorem{lemma}[theorem]{Lemma}
\newtheorem{defn}[theorem]{Definition}
\newtheorem{corollary}[theorem]{Corollary}
\newtheorem{proposition}[theorem]{Proposition}
\newtheorem{remark}[theorem]{Remark}

\newtheorem{Question}[]{Question}

\newtheorem{thmx}{Theorem}

\def\aff{\mathop\mathrm{aff}\nolimits}

\def\conv{\mathop\mathrm{conv}\nolimits}

\def\cl{\mathop\mathrm{cl}\nolimits}

\def\s{\mathbb{S}}
\def\K{\mathcal{K}}

\def\R{\mathbb{R}}

\def\H{\mathcal{H}}

\def\Cc{h} 
\def\Cs{\operatorname{Chg}} 
\newcommand{\dlat}{\mathrm{d}}
\newcommand{\sfe}{{\mathbb S}^{n-1}}

\newcommand{\bigdotcup}[1][0pt]{\mathaccent\cdot{}\kern-#1\bigcup}

\numberwithin{equation}{section}

\title{Canal Classes and Cheeger Sets}
\date\today

\author{Nico Lombardi}
\address{Via C. Cattaneo, 12, 50019, Sesto Fiorentino, Firenze, Italy}
\email{nico.lombardi.1990@gmail.com}

\author{Christian Richter}
\address{Institute of Mathematics, Friedrich Schiller University of Jena, Ernst-Abbe-Platz 2, 07743 Jena, Germany}
\email{christian.richter@uni-jena.de}
 
\author{Eugenia Saor\'\i n G\'omez}
\address{ALTA Institute for Algebra, Geometry, Topology and their Applications, University of Bremen, 28334 Bremen, Germany}
\email{esaoring@uni-bremen.de}
	
\keywords{Convex body, orthogonal projection, Cheeger set, canal class, isometric type inequality, linear Brunn-Minkowski type ineqality.}

\subjclass{Primary 52A40; secondary 52A15, 52A20, 52A38.}

\begin{document}

\begin{abstract}
Giannopoulos, Hartzoulaki and Paouris asked in \cite{GHP} whether the best ratio between volume and surface area of convex bodies sharing a given orthogonal projection onto a fixed hyperplane is attained in the limit by a cylinder over the given projection. The answer to the question is known to be negative. In this paper, we prove a characterization of the positive answer in dimension $3$, using the Cheeger set of the common projection. A partial characterization is given in higher dimensions. We also prove that certain canal classes of convex bodies provide families of convex bodies satisfying a closely related inequality for a similar ratio.
\end{abstract}

\maketitle

\section{Introduction and main results}

In this work, we investigate when the following question posed in \cite{GHP} has a positive answer. The question is known to have, in general, a negative answer \cite{FGM}. We prove a complete characterization of a positive answer in the three-dimensional case and provide partial equivalent statements in the $n$-dimensional case.

\begin{Question}[{\cite{GHP}}]\label{Q1}
    Let $u \in \sfe$ be a direction and let $C \subset u^\perp$ be an $(n-1)$-dimensional convex body, i.e., a compact and convex subset of $\R^n$. 
    Is the supremum of $\frac{\H^n(K)}{\H^{n-1}(\partial K)}$ over all convex bodies $K$ with $P_{u^\perp}(K)=C$, i.e., whose projection onto $u^\perp$ is precisely $C$, attained for right cylinders over $C\subset u^\perp$ (of infinite height)?
\end{Question} 
 Here, $\H^k(S)$ denote the $k$-dimensional (outer) Hausdorff measure of $S\subset \R^n$, $k=0,\ldots,n$, and $\sfe$ denotes the unit sphere. The dimension of a convex body is the dimension of its affine hull, i.e., the smallest affine subspace of $\R^n$ in which the convex body is contained. Further, for $u\in \sfe$, the hyperplane through the origin orthogonal to $u$ is denoted by $u^\perp$. We will denote by $\K^n$ the space of all nonempty compact and convex subsets (convex bodies) of $\R^n$, and 
by $\K^n_n$ the set of all convex bodies having interior points, i.e., full-dimensional convex bodies. We denote by $P_{u^\perp}(K)\subset u^\perp$ the orthogonal projection of $K$ onto $u^\perp$. Observe that the convex body $P_{u^\perp}(K)$ has a dimension strictly less than $n$.

The family of convex bodies having the same projection $C\subset u^\perp$ onto a given hyperplane $u^\perp$, $u\in\sfe$, is known as \textit{canal class} of $C$, see \cite[Section 7.7]{Sch}. For our purposes in this paper, we will consider only full-dimensional convex bodies in the canal class, setting thus
$\K_C:=\{ K\in \K^n_n:\ P_{u^\perp}(K)=C\}$ to be the canal class with respect to $u\in\sfe$ and $C\subset u^\perp$. With this terminology, Question~\ref{Q1} reads as follows.

\setcounter{Question}{0}
\begin{Question}[reformulated]
Let $u\in\sfe$ and let $C\subset u^\perp\subset \R^n$ be an $(n-1)$-dimensional convex body. Does it hold that
\begin{equation}\label{Question 1}
\sup\left\{ \frac{\H^n(K)}{\H^{n-1}(\partial K)}:\ K\in \K_C \right\}=\lim_{h\rightarrow \infty}\frac{\H^n(C_h)}{\H^{n-1}(\partial C_h)}
\end{equation}
where $C_h$ 
is the orthogonal cylinder in $\R^n$ over $C$ with height $h$?
\end{Question}

In the present paper, we give explicit examples in $\R^n$, $n \ge 3$, that provide negative answers to Question~\ref{Q1}. The examples happen to coincide with a construction given in \cite[Proposition 4.1]{AAFO} with a different purpose. After the question is negatively answered, we focus on the possibility to find and characterize families of convex bodies in $\R^n$ satisfying \eqref{Question 1}.

We observe that the right-hand side of \eqref{Question 1} coincides with $\frac{\H^{n-1}(C)}{\H^{n-2}(\partial C)}$ (see Section \ref{s:2} for details). This links Question \ref{Q1} intrinsically to the following inequality 
\begin{equation}\label{ineq Q2}
\frac{\H^n(K)}{\H^{n-1}(\partial K)}\leq \frac{\H^{n-1}(P_{u^\perp}(K))}{\H^{n-2}(\partial P_{u^\perp}(K))}.
\end{equation}

We stress that in Question~\ref{Q1} we denoted by $\partial K$ and $\partial C$ the relative boundaries of $K$ and $C$. Taking the latter and \eqref{ineq Q2} into account, we are forced to consider only full-dimensional convex bodies in the canal class, so that all the ratios above make sense.

We notice that in \cite{FGM}, the authors prove that inequality \eqref{ineq Q2}
fails for particular $K\in\K^n$ and $u\in\sfe$, which implies that the answer to Question \ref{Q1} is negative for certain $(n-1)$-dimensional convex bodies $C\subset u^\perp$, although this fact is not explicitly mentioned in \cite{FGM}.
Considering the intimate connection between inequality \eqref{ineq Q2} and Question~\ref{Q1}, together with 
 
\begin{equation}\label{e: connection to Cheeger}\lim_{h\rightarrow \infty}\frac{\H^n(C_h)}{\H^{n-1}(\partial C_h)}=\frac{\H^{n-1}(C)}{\H^{n-2}(\partial C)},
\end{equation}
the investigation of \eqref{Question 1} is naturally connected to the Cheeger set of the convex body $C$, i.e., the Cheeger set of the $(n-1)$-dimensional convex body, which is the projection onto $u^\perp$ of all convex bodies considered in a canal class.

The Cheeger constant of a convex body $K\in\K^n$ is defined as
\[
\Cc(K):=\sup\left\{ \frac{\H^{\dim K}(C)}{\H^{\dim K-1}(\partial C)}:\ C\subset K,\ C \text{ closed},\ \H^{\dim K-1}(\partial C)>0 \right\},
\]
where $1\leq \dim K$. 
Note that if $\dim K=k$, then $\Cc(K) \ge \frac{\H^k(K)}{\H^{k-1}(\partial K)} > 0$. We call a closed subset $C \subset K$ such that $\Cc(K)=\frac{\H^k(C)}{\H^{k-1}(\partial C)}$, if it exists, a Cheeger set of $K$.  Notice that the closedness assumption is not always assumed in the literature.
The Cheeger set of a convex body $K$ is uniquely determined and will be denoted by $\Cs(K)$, see Section \ref{s:2} for further details and references on the subject.

Within this context, we prove the following result.

\begin{thmx}\label{th: bounded by Cheeger}
Let $n\geq 2$, and let $u\in \sfe$. Let further $C\subset u^{\perp}$ be an $(n-1)$-dimensional convex body. Then the inequalities
\begin{equation}\label{ineq: bounded by Cheeger}
    \frac{\H^n(L)}{\H^{n-1}(\partial L)} < \sup\left\{ \frac{\H^n(K)}{\H^{n-1}(\partial K)}:\ K\in \K_C \right\}\leq \frac{\H^{n-1}(\Cs(C))}{\H^{n-2}(\partial\Cs(C))}
\end{equation}
hold for every $L \in \K_C$.

Moreover, for every $n \ge 3$, there exists an $(n-1)$-dimensional convex body $C \in \K^n$ such that the second inequality in \eqref{ineq: bounded by Cheeger} is strict.
\end{thmx}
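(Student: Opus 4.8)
The three assertions are proved separately; the two inequalities are short, and the existence statement is the substantial point.

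\textbf{The strict first inequality.} The plan is to show that no $L\in\K_C$ is extremal. After an isometry assume $u=e_n$ and $u^\perp=\R^{n-1}$; then each $L\in\K_C$ is the region between a convex function $f\colon C\to\R$ (its lower boundary) and a concave function $g\colon C\to\R$ (its upper boundary), with $f\le g$ on $C$ and $f<g$ on $\inter C$ by full-dimensionality. Put $G:=g-f$ (nonnegative, concave, positive on $\inter C$) and set
\[
M:=\{(x,t)\in\R^{n-1}\times\R:\ x\in C,\ |t|\le G(x)\}.
\]
Then $M$ is a full-dimensional compact convex set with $P_{u^\perp}(M)=C$, so $M\in\K_C$, and $\H^n(M)=\int_C 2G=2\H^n(L)$. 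Writing any such body's boundary, up to an $\H^{n-1}$-null set, as the two graphs over $C$ together with a ``vertical'' part over $\relbd C$ of $\H^{n-1}$-measure $\int_{\relbd C}(g-f)\,d\H^{n-2}$, one obtains
\[
\H^{n-1}(\partial L)=\int_C\Big(\sqrt{1+|\nabla g|^2}+\sqrt{1+|\nabla f|^2}\Big)+\int_{\relbd C}(g-f),\qquad \H^{n-1}(\partial M)=2\int_C\sqrt{1+|\nabla G|^2}+2\int_{\relbd C}(g-f).
\]
The pointwise inequality $\sqrt{1+|a-b|^2}<\sqrt{1+|a|^2}+\sqrt{1+|b|^2}$, valid for all $a,b\in\R^{n-1}$ (apply the triangle inequality to $(1,a),(0,b)\in\R^n$ and use $|b|<\sqrt{1+|b|^2}$), integrated over $C$ gives $\H^{n-1}(\partial M)<2\,\H^{n-1}(\partial L)$, hence $\frac{\H^n(M)}{\H^{n-1}(\partial M)}>\frac{\H^n(L)}{\H^{n-1}(\partial L)}$ and the supremum exceeds $\frac{\H^n(L)}{\H^{n-1}(\partial L)}$ strictly. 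This is entirely general and needs no regularity of $L$.

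\textbf{The second inequality.} For $K\in\K_C$ (again $u=e_n$) let $\overline{K_t}\subseteq\R^{n-1}$ be the isometric image of the slice $K\cap(\R^{n-1}\times\{t\})$, a closed subset of $C$. By Fubini $\H^n(K)=\int_\R\H^{n-1}(\overline{K_t})\,dt$; by the coarea formula for $x\mapsto x_n$ on the rectifiable set $\partial K$, $\int_\R\H^{n-2}(\partial\overline{K_t})\,dt=\int_{\partial K}|\nabla^{\partial K}x_n|\,d\H^{n-1}\le\H^{n-1}(\partial K)$, since the tangential gradient has length at most $1$. As $\overline{K_t}$ is a closed subset of $C$, the definition of the Cheeger constant gives $\H^{n-1}(\overline{K_t})\le\Cc(C)\,\H^{n-2}(\partial\overline{K_t})=\frac{\H^{n-1}(\Cs(C))}{\H^{n-2}(\partial\Cs(C))}\,\H^{n-2}(\partial\overline{K_t})$ for a.e.\ $t$; integrating and combining the three relations yields $\H^n(K)\le\frac{\H^{n-1}(\Cs(C))}{\H^{n-2}(\partial\Cs(C))}\,\H^{n-1}(\partial K)$, and the bound follows upon taking the supremum over $\K_C$.

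\textbf{Strictness for a suitable $C$.} I would take $C$ to be a concrete non-calibrable body — for $n=3$ a square, and in general the cube $[0,1]^{n-1}\subset u^\perp$ — so that $\Cs(C)\subsetneq C$; fix a direction $\theta$ with $\mu:=\max_{x\in\Cs(C)}\langle x,\theta\rangle<\nu:=\max_{x\in C}\langle x,\theta\rangle$ and $\varepsilon>0$ with $\mu+\varepsilon<\nu$, and argue by contradiction from a sequence $K_j\in\K_C$ with $\frac{\H^n(K_j)}{\H^{n-1}(\partial K_j)}\to\Cc(C)$. The slicing estimate forces the ``deficit'' $d_j:=\int\big(\Cc(C)\,\H^{n-2}(\partial\overline{(K_j)_t})-\H^{n-1}(\overline{(K_j)_t})\big)\,dt$ to be of smaller order than $\Pi_j:=\int\H^{n-2}(\partial\overline{(K_j)_t})\,dt$, while one checks $\Pi_j\to\infty$ and the height of $K_j$ tends to $\infty$. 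On the other hand, since $\bigcup_t\overline{(K_j)_t}=C$, the concave function $t\mapsto\max_{x\in\overline{(K_j)_t}}\langle x,\theta\rangle$ has supremum $\nu$ over the interval of heights of $K_j$, and a chord estimate for concave functions shows that $\{t:\max_{x\in\overline{(K_j)_t}}\langle x,\theta\rangle>\mu+\varepsilon\}$ is an interval whose length is at least $\frac{\nu-\mu-\varepsilon}{\nu}$ times the height of $K_j$. Every slice over such a $t$ reaches the fixed ``corner cap'' $\{x\in C:\langle x,\theta\rangle>\mu+\varepsilon\}$, hence sticks out of $\Cs(C)$ by a fixed amount; a compactness argument (uniqueness of the Cheeger set, together with the standard fact that the volume/perimeter ratio of a convex body is at most half its minimal width and hence controls its inradius from above) shows such a slice has deficit bounded below by some $\delta_0>0$. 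Using in addition that the inradius of the slices is concave in $t$ — so the ``thin'' slices occupy at most two end-intervals — and Minkowski-concavity of the slice family (slices interpolating between a fat near-$\Cs(C)$ slice and one reaching the corner cap remain fat over a proportional $t$-range), one concludes $\int_{\{\,\max\langle\cdot,\theta\rangle>\mu+\varepsilon\,\}}\H^{n-2}(\partial\overline{(K_j)_t})\,dt\ge c\,\Pi_j$ for a fixed $c>0$, whence $d_j\ge\delta_0 c\,\Pi_j$, contradicting $d_j=o(\Pi_j)$.

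\textbf{Main obstacle.} The delicate step is the last one: upgrading ``a definite fraction of the slices leave a neighbourhood of $\Cs(C)$'' to ``a definite fraction of the \emph{boundary area} is carried by such slices'', i.e.\ ruling out the degenerate scenario in which the offending slices are thin slivers of negligible perimeter. Choosing $C$ to be a polytope is convenient because only finitely many ``corner directions'' $\theta$ arise and the bad-slice intervals can be bookkept explicitly, which is why I expect the three-dimensional square (and the cube in general) to be the cleanest witness. (If the slicing argument proves too cumbersome, the same conclusion for $n=3$ should be extractable from the three-dimensional characterisation of the positive answer to the Question: one exhibits a non-calibrable $C\subset\R^3$ for which that characterisation both fails and pins the supremum strictly below $\Cc(C)$.)
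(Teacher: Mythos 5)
Your arguments for the two inequalities in \eqref{ineq: bounded by Cheeger} are correct, and both take routes genuinely different from the paper's. For the left inequality the paper does not symmetrize: it stretches $L$ vertically via $D_\lambda=\operatorname{diag}(1,\ldots,1,\lambda)$ and shows (Lemma~\ref{Lemma integrals}, first for polytopes, then by approximation) that $\lambda\mapsto\H^n(K_\lambda)/\H^{n-1}(\partial K_\lambda)$ is strictly increasing; your vertical symmetrization $M=\{(x,t):|t|\le(g-f)(x)\}$ together with the strict pointwise bound $\sqrt{1+|a-b|^2}<\sqrt{1+|a|^2}+\sqrt{1+|b|^2}$ gives a one-shot competitor and is a perfectly valid alternative (the surface-area formula via graphs plus the lateral piece over $\relbd C$ is standard for convex bodies, including the non-Lipschitz behaviour of $f,g$ near $\relbd C$). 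For the right inequality the paper also slices, but bounds $\int\H^{n-2}(\partial[K]_h)\,\dlat h$ by $\H^{n-1}(\partial K)$ indirectly, through the limit of the dilates $K_\lambda$; your direct coarea estimate $\int_{\partial K}|\nabla^{\partial K}x_n|\,d\H^{n-1}\le\H^{n-1}(\partial K)$ is cleaner and avoids the polytope approximation. The point that each slice is a translate of a closed subset of $C$, hence controlled by $\Cc(C)$, is exactly the paper's argument.

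The third assertion is where your proposal has a genuine gap, and you flag it yourself: the step upgrading ``a definite fraction of the heights $t$ give slices that leave a neighbourhood of $\Cs(C)$'' to ``those slices carry a definite fraction of $\int\H^{n-2}(\partial[K]_h)\,\dlat h$'' is not established. As written, the deficit $\Cc(C)\H^{n-2}(\partial S)-\H^{n-1}(S)$ of a corner-reaching slice $S$ is \emph{not} bounded below by a constant $\delta_0>0$ (a thin sliver near a corner has arbitrarily small perimeter and hence arbitrarily small absolute deficit); what compactness and uniqueness of the Cheeger set actually give is a bound on the \emph{relative} deficit, i.e.\ $\H^{n-1}(S)/\H^{n-2}(\partial S)\le\Cc(C)-\delta_0$, and one must then still show that the corner-reaching interval captures a fixed fraction of the perimeter integral (plausibly via concavity of $p(t)^{1/(n-2)}$, $p(t)=\H^{n-2}(\partial[K]_t)$, but none of this is carried out). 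The paper avoids all of this with a two-line argument of a completely different nature: by the Giannopoulos--Hartzoulaki--Paouris inequality (Theorem~\ref{th: GHP}), $\sup_{K\in\K_C}\H^n(K)/\H^{n-1}(\partial K)\le\frac{2(n-1)}{n}\,\H^{n-1}(C)/\H^{n-2}(\partial C)$, so it suffices to produce an $(n-1)$-dimensional $C$ whose own isoperimetric ratio is smaller than $\frac{n}{2(n-1)}$ times its Cheeger constant. Lemma~\ref{lem-Pyramid} does this with a very flat simplex $C=\conv\{\underline{0},h^2e_1,\ldots,h^2e_{n-2},e_{n-1}\}$, for which $\H^{n-1}(C)/\H^{n-2}(\partial C)<\frac{1}{2(n-1)}$ while $\Cs(C)$ contains an almost-full slab giving ratio close to $\frac12$; since $\frac{2(n-1)}{n}\cdot\frac{1}{(1-\epsilon)(n-1)}=\frac{2}{(1-\epsilon)n}<1$ for $n\ge3$, strictness follows. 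Note also that your intended witness, the cube, is the \emph{hardest} kind of example for this purpose (its isoperimetric ratio is already comparable to its Cheeger constant), whereas the paper's flat simplex makes the gap between the two quantities as large as possible so that the crude factor $\frac{2(n-1)}{n}$ from Theorem~\ref{th: GHP} suffices.
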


If $C$ is a Cheeger set, there is equality in the right-hand side of inequality \eqref{ineq: bounded by Cheeger}. More precisely, we have the following result.

\begin{thmx}\label{thm:equality for Cheeger}
     Let $u\in \sfe$, and let $n \ge 2$. Let further $C\subset u^{\perp}$ be an $(n-1)$-dimensional convex body. If $C$ is a Cheeger set, then
     \begin{equation}\label{equality for Cheeger}
\sup\left\{ \frac{\H^n(K)}{\H^{n-1}(\partial K)}:\ K\in \K_C \right\}= \frac{\H^{n-1}(C)}{\H^{n-2}(\partial C)}.
\end{equation}
\end{thmx}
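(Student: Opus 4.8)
The plan is to obtain Theorem~\ref{thm:equality for Cheeger} as a direct consequence of Theorem~\ref{th: bounded by Cheeger}, exploiting the fact that the hypothesis ``$C$ is a Cheeger set'' means precisely $\Cs(C)=C$ (i.e., $\Cc(C)=\frac{\H^{n-1}(C)}{\H^{n-2}(\partial C)}$, so that $C$ attains its own Cheeger constant and hence equals its unique Cheeger set).

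First I would handle the upper bound. Substituting $\Cs(C)=C$ into the right-hand inequality of \eqref{ineq: bounded by Cheeger} gives immediately
\[
\sup\left\{ \frac{\H^n(K)}{\H^{n-1}(\partial K)}:\ K\in \K_C \right\}\leq \frac{\H^{n-1}(\Cs(C))}{\H^{n-2}(\partial\Cs(C))}=\frac{\H^{n-1}(C)}{\H^{n-2}(\partial C)}.
\]
For the reverse inequality I would test the supremum against the orthogonal cylinders $C_h$ over $C$, which belong to $\K_C$ for every finite height $h>0$. Either the strict lower bound in \eqref{ineq: bounded by Cheeger} applied with $L=C_h$, or the elementary identities $\H^n(C_h)=h\,\H^{n-1}(C)$ and $\H^{n-1}(\partial C_h)=2\,\H^{n-1}(C)+h\,\H^{n-2}(\partial C)$ together with \eqref{e: connection to Cheeger}, yield
\[
\sup\left\{ \frac{\H^n(K)}{\H^{n-1}(\partial K)}:\ K\in \K_C \right\}\geq \sup_{h>0}\frac{\H^n(C_h)}{\H^{n-1}(\partial C_h)}=\lim_{h\to\infty}\frac{\H^n(C_h)}{\H^{n-1}(\partial C_h)}=\frac{\H^{n-1}(C)}{\H^{n-2}(\partial C)}.
\]
Combining the two displays gives \eqref{equality for Cheeger}.

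Given that Theorem~\ref{th: bounded by Cheeger} is already available, there is essentially no obstacle at this point: the Cheeger hypothesis on $C$ is exactly what collapses the upper bound of Theorem~\ref{th: bounded by Cheeger} down to the cylinder value $\frac{\H^{n-1}(C)}{\H^{n-2}(\partial C)}$, which is a lower bound for the supremum for \emph{every} $(n-1)$-dimensional convex body $C$. The genuine difficulty lies entirely in Theorem~\ref{th: bounded by Cheeger} itself --- namely in establishing that the Cheeger quotient of the projection $C$ dominates $\H^n(K)/\H^{n-1}(\partial K)$ for all $K\in\K_C$, presumably through a Fubini-type slicing of $K$ over $C$ and a comparison of the resulting volume and surface-area contributions with a competitor subset of $C$ --- but that work is done before this statement, so the present proof is a short corollary.
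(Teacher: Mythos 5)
Your proposal is correct and follows essentially the same route as the paper: the upper bound comes from the right-hand inequality of Theorem~\ref{th: bounded by Cheeger} with $\Cs(C)=C$, and the lower bound from the cylinders $C_h\in\K_C$ via \eqref{e: connection to Cheeger}. No gaps.
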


In the case $n=3$, we provide a characterization of the positive answer to Question \ref{Q1}, namely, there is a positive answer to Question \ref{Q1} if and only if the set $C\subset u^\perp$ is a Cheeger set, which happens if and only if inequality \eqref{ineq Q2}
is satisfied for this precise $u\in\sfe$.

\begin{thmx}\label{th intro: n=3} Let $u\in\sfe$, $n=3$, and let $C \in \K^n$ be an $(n-1)$-dimensional convex body, such that $C\subset u^\perp$.
    Then, the following statements are equivalent:
    \begin{enumerate}
    \item[$(i)$] Question~\ref{Q1} has an affirmative answer, i.e., \eqref{Question 1} holds for $C$.
    
    \item[$(ii)$] For any $K\in \K_C$, we have 
    \[
    \frac{\H^n(K)}{\H^{n-1}(\partial K)}\leq \frac{\H^{n-1}(C)}{\H^{n-2}(\partial C)}=\frac{\H^{n-1}(P_{u^\perp} (K))}{\H^{n-2}(\partial P_{u^\perp} (K))},
    \] 
    i.e., inequality \eqref{ineq Q2} holds true.

 \item[$(iii)$] $C$ is a Cheeger set.
    \end{enumerate}
    
\end{thmx}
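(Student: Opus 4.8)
The plan is to prove the equivalences by a cyclic argument $(iii)\Rightarrow(ii)\Rightarrow(i)\Rightarrow(iii)$, where the first two implications hold in all dimensions and only the last one genuinely uses $n=3$. The implication $(iii)\Rightarrow(ii)$ is essentially Theorem~\ref{thm:equality for Cheeger}: if $C$ is a Cheeger set then $\Cs(C)=C$, so the upper bound in Theorem~\ref{th: bounded by Cheeger} becomes $\frac{\H^{n-1}(C)}{\H^{n-2}(\partial C)}$, and hence every $K\in\K_C$ satisfies $\frac{\H^n(K)}{\H^{n-1}(\partial K)}\le\frac{\H^{n-1}(C)}{\H^{n-2}(\partial C)}$; the equality with $\frac{\H^{n-1}(P_{u^\perp}(K))}{\H^{n-2}(\partial P_{u^\perp}(K))}$ is just the definition of $\K_C$ together with $P_{u^\perp}(K)=C$. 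The implication $(ii)\Rightarrow(i)$ is immediate from \eqref{e: connection to Cheeger}: the right-hand side of \eqref{Question 1} equals $\frac{\H^{n-1}(C)}{\H^{n-2}(\partial C)}$, which by $(ii)$ is an upper bound for the supremum on the left, while the left-hand side of \eqref{ineq: bounded by Cheeger} in Theorem~\ref{th: bounded by Cheeger} (applied, say, to a tall cylinder $C_h$ and letting $h\to\infty$) shows that the supremum is at least $\frac{\H^{n-1}(C)}{\H^{n-2}(\partial C)}$, so equality holds.

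The real work is in $(i)\Rightarrow(iii)$, and I expect this to be the main obstacle. I would argue by contraposition: assume $C\subset u^\perp\cong\R^2$ is a planar convex body that is \emph{not} a Cheeger set, so $\Cs(C)\subsetneq C$ and $\frac{\H^2(\Cs(C))}{\H^1(\partial\Cs(C))}=\Cc(C)>\frac{\H^2(C)}{\H^1(\partial C)}$. The goal is to construct some $K\in\K_C$ with $\frac{\H^3(K)}{\H^2(\partial K)}>\frac{\H^2(C)}{\H^1(\partial C)}$, which violates \eqref{Question 1}. The natural candidate is a body that is ``thick over $\Cs(C)$ and thin over $C\setminus\Cs(C)$'': for instance take $K$ to be the convex hull of $C$ (sitting in $u^\perp$) together with a translate $\Cs(C)+tu$ of the Cheeger set pushed up by height $t>0$ — or, more flexibly, a body whose horizontal slices interpolate from $C$ at the bottom to a set close to $\Cs(C)$ near the top. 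One then computes $\H^3(K)$ and $\H^2(\partial K)$ as $t$ varies; the point is that the volume grows roughly like $t\cdot\H^2(\Cs(C))$ while the lateral surface area grows roughly like $t\cdot\H^1(\partial\Cs(C))$ (the slanted sides over $C\setminus\Cs(C)$ and the two horizontal caps contributing lower-order or bounded terms once normalised correctly), so the ratio tends to $\frac{\H^2(\Cs(C))}{\H^1(\partial\Cs(C))}=\Cc(C)>\frac{\H^2(C)}{\H^1(\partial C)}$ as $t\to\infty$.

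To make this rigorous I would need two planar facts, both available in the Cheeger-set literature cited in Section~\ref{s:2}: first, that for a planar convex body the Cheeger set $\Cs(C)$ is obtained from $C$ by ``rounding the corners'' with an inner parallel body construction, in particular $\Cs(C)$ is a convex body with $\relint\Cs(C)\ne\emptyset$ whenever $C$ is not itself Cheeger; and second, the relation $\H^1(\partial\Cs(C))=\H^1(\partial C_{-r})+\text{(something)}$ coming from the inner parallel body / the fact that $\Cs(C)=C_{-r}+rB^2$ where $r=\Cc(C)$ and $C_{-r}$ is the inner parallel body at distance $r$ — this identity is what forces the clean asymptotic cancellation above. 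The delicate step is checking that the constructed $K$ is genuinely convex with $P_{u^\perp}(K)=C$ exactly (not a proper subset), which is why one should take the convex hull of $C\times\{0\}$ with a \emph{small} scaled copy of $\Cs(C)$ at height $t$, or truncate appropriately, and verify the projection is unchanged; and then that the error terms in $\H^3(K)$ and $\H^2(\partial K)$ are indeed $O(1)$ or $O(t)$ with the right constants as $t\to\infty$. An alternative, cleaner route for the asymptotics is to use the already-established chain in Theorem~\ref{th: bounded by Cheeger}: since $(i)$ says the supremum equals $\frac{\H^{n-1}(C)}{\H^{n-2}(\partial C)}$, combine this with the upper bound $\le\frac{\H^{n-1}(\Cs(C))}{\H^{n-2}(\partial\Cs(C))}$ — this only gives one inequality — so one still needs the explicit construction to get the \emph{reverse} inequality $\sup\ge\Cc(C)$, which is precisely the hard geometric content and where the dimension restriction $n=3$ enters through the structure theory of planar Cheeger sets. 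I would therefore isolate this construction as a separate lemma (perhaps stated for general $n$ under the hypothesis that $\Cs(C)$ has nonempty interior and is ``reachable'' in a suitable sense), prove it in the plane using the corner-rounding description, and then assemble the three implications.
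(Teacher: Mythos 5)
Your reduction of the problem is the right one: $(iii)\Rightarrow(ii)\Rightarrow(i)$ follow from Theorems~\ref{th: bounded by Cheeger} and \ref{thm:equality for Cheeger} exactly as you say, and the paper likewise isolates the contrapositive of $(i)\Rightarrow(iii)$ as the only real work, proving it with essentially your candidate body: the convex hull of $C$ with a lifted copy of a subset of $C$ having a better area-to-perimeter ratio. However, there is a genuine gap in how you justify that this body violates $(i)$. For $K_t=\conv\bigl(C\cup(\Cs(C)+tu)\bigr)$ the horizontal slices are the full linear interpolations $(1-\tfrac{h}{t})C+\tfrac{h}{t}\Cs(C)$, uniformly distributed over the height; convexity forces the slice at height $h$ to contain $(1-\tfrac{h}{t})C+\tfrac{h}{t}[K_t]_t$, so no body in $\K_C$ can be ``thick over $\Cs(C)$ and thin over $C\setminus\Cs(C)$'' for most of its height. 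Consequently $\H^3(K_t)\sim t\int_0^1\H^2((1-s)C+s\Cs(C))\,\dlat s$, not $t\,\H^2(\Cs(C))$, and the ratio does \emph{not} tend to $\Cc(C)$; it tends to a quotient of two slice integrals. Indeed Proposition~\ref{prop-counter_Cheeger} shows that already for $n=3$ the supremum over $\K_C$ can be strictly smaller than $\frac{\H^{2}(\Cs(C))}{\H^{1}(\partial\Cs(C))}$, so the limit you claim is provably unattainable in general.

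What is actually needed, and what your proposal never identifies, is a lower bound on the isoperimetric-type ratio of the \emph{intermediate} slices: one must know that
\[
\frac{\H^2((1-s)C+sB)}{\H^1\bigl(\partial((1-s)C+sB)\bigr)}\;\ge\;(1-s)\frac{\H^2(C)}{\H^1(\partial C)}+s\frac{\H^2(B)}{\H^1(\partial B)},
\]
which is Lemma~\ref{FGM}, the planar Minkowski-sum inequality. This is precisely where the restriction $n=3$ enters (the analogous inequality for $n$-dimensional summands fails for $n\ge3$), not through the inner-parallel-body description of planar Cheeger sets, which is neither carried out in your sketch nor sufficient to control those slices. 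With Lemma~\ref{FGM} in hand the argument closes much more simply than your asymptotic scheme: take any closed convex $B\subset C$ with $\frac{\H^2(B)}{\H^1(\partial B)}>\frac{\H^2(C)}{\H^1(\partial C)}$ (no structure theory of $\Cs(C)$ is required), set $K=\conv(C\cup(B+u))$ of height $1$, and compare the integrals $\int_0^1\H^2([K]_h)\,\dlat h$ and $\int_0^1\H^1(\partial[K]_h)\,\dlat h$ slice by slice; this contradicts the integral reformulation $(iii)$ of Theorem~\ref{Characterization theorem n} (equivalently Remark~\ref{Remark after Lemma}) without any limit $t\to\infty$ and without estimating lateral surface areas.
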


The content of the latter result for $n=2$ can be already found in the literature \cite{Bonnesen, GHP}, which provides us with a positive answer of Question~\ref{Q1} for every planar convex body. Observe, that any projection of a $2$-dimensional convex body onto a hyperplane $u^\perp$ (a line in the plane) is a segment, which is a Cheeger set in $u^\perp$.

Inequality \eqref{ineq Q2} is connected to the inequality
\begin{equation}\label{ineq connected}
 \frac{\H^n(K+L)}{\H^{n-1}(\partial (K+L))}\geq \frac{\H^n(K)}{\H^{n-1}(\partial K)}+\frac{\H^{n}(L)}{\H^{n-1}(\partial L)},
\end{equation}
where the vectorial or Minkowski sum of $A,B\subset\R^n$ is given by $A+B:=\{ a+b:\ a\in A,\ b\in B \}$. Indeed, taking into consideration
\begin{equation}\label{x}
	\frac{\H^n(K+L)}{\H^{n-1}(\partial (K+L))}\geq \frac{\H^n(K)}{\H^{n-1}(\partial K)}+\frac{\H^{n}(L)}{\H^{n-1}(\partial L)}\geq \frac{\H^n(K)}{\H^{n-1}(\partial K)}  
\end{equation}
and letting $L$ be a segment, i.e., $L=[\underline{0},u]$, for some direction $u\in \sfe$, inequality \eqref{x} yields
\begin{equation}\label{ineq segment}
\frac{\H^n(K)}{\H^{n-1}(\partial K)} \le \frac{\H^n(K+[\underline{0},u])}{\H^{n-1}(\partial (K+[\underline{0},u]))}= \frac{\H^n(K)+\H^{n-1}(P_{u^\perp} (K))}{\H^{n-1}(\partial K)+\H^{n-2}(\partial P_{u^\perp} (K))},   
\end{equation}
which is exactly \eqref{ineq Q2}.

Let $u\in \sfe$ be a fixed direction. The next inequality has the same spirit as \eqref{ineq connected}, where instead of the boundary of a convex body, its projection onto $u^{\perp}$ is considered, namely
\begin{equation}\label{e: 1}
	\frac{\H^n(K+L)}{\H^{n-1}(P_{u^\perp}(K+L))}\geq  \frac{\H^n(K)}{\H^{n-1}(P_{u^\perp}(K))}+ \frac{\H^n(L)}{\H^{n-1}(P_{u^\perp}(L))}.
\end{equation}
We remark that inequality \eqref{e: 1} holds for every convex body in the case of $n=2$, coinciding with \cite[formula (7.196)]{Sch} (see also \cite[Section 50]{Bonnesen}).

Although \eqref{e: 1} fails for certain convex bodies if $n\geq 3$ (see \cite{FMMZ}), a refined version of the Brunn-Minkowski inequality allows us to prove its validity directly in an appropriate set, slightly larger than a canal class. More precisely, \eqref{e: 1} holds for a direction $u$, if the projections $P_{u^\perp}(K)$ and $P_{u^\perp}(L)$ are homothetic, i.e., there exist $\lambda > 0$ and $a \in \R^n$ such that $P_{u^\perp}(K)=\lambda P_{u^\perp}(L)+a$.

\begin{thmx}\label{Equal projec}
Let $u\in \sfe$ and let $n \ge 3$. Let $K,L \in \K^n$ be such that $P_{u^\perp}(K),P_{u^\perp}(L)\subset u^\perp$ are homothetic $(n-1)$-dimensional convex bodies. Then 
\begin{equation}\label{ineq equal projec}
\frac{\H^n(K+L)}{\H^{n-1}(P_{u^\perp}(K+L))}\geq  \frac{\H^n(K)}{\H^{n-1}(P_{u^\perp}(K))}+ \frac{\H^n(L)}{\H^{n-1}(P_{u^\perp}(L))},
\end{equation} i.e.,  inequality \eqref{e: 1} 
holds true for $K$ and $L$.

Moreover, for every $n \ge 3$, there are two convex bodies $K,L\in \K^n_n$ and a direction $u\in \sfe$, such that $\H^{n-1}(P_{u^\perp}(K))=\H^{n-1}(P_{u^\perp}(L)) \ne 0$, for which inequality \eqref{ineq equal projec} fails.

\end{thmx}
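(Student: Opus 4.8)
The plan is to prove the inequality by reducing it, via the homothety assumption, to a one-parameter computation governed by the Brunn–Minkowski inequality together with the equality characterization of the isoperimetric-type ratio for sets with a prescribed projection. Write $u$ for the fixed direction and, after a translation (which changes none of the quantities involved), assume $P_{u^\perp}(L)=M$ and $P_{u^\perp}(K)=\lambda M$ for some $(n-1)$-dimensional convex body $M\subset u^\perp$ and $\lambda>0$; then $P_{u^\perp}(K+L)=(1+\lambda)M$, so that $\H^{n-1}(P_{u^\perp}(K+L))=(1+\lambda)^{n-1}\H^{n-1}(M)$ while $\H^{n-1}(P_{u^\perp}(K))=\lambda^{n-1}\H^{n-1}(M)$ and $\H^{n-1}(P_{u^\perp}(L))=\H^{n-1}(M)$. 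Dividing through by $\H^{n-1}(M)$, the claim \eqref{ineq equal projec} becomes
\[
\frac{\H^n(K+L)}{(1+\lambda)^{n-1}}\ \geq\ \frac{\H^n(K)}{\lambda^{n-1}}\ +\ \H^n(L).
\]

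First I would express each volume as an integral over $M$ of the length of the fibre of $K$, $L$, or $K+L$ in direction $u$. Writing $f,g:M\to[0,\infty)$ for the (projected) fibre-length functions of $L$ and of the rescaled copy $\tfrac1\lambda K$ respectively — concretely, $\H^n(L)=\int_M f$, and $\H^n(K)=\lambda^{n}\int_M g$ after the substitution $x\mapsto\lambda x$ — the key geometric input is that the fibre of $K+L$ over a point of $(1+\lambda)M$ dominates the sum of the corresponding fibres of $K$ and $L$; combined with the substitution $y=(1+\lambda)x$ this gives $\H^n(K+L)\geq (1+\lambda)^{n-1}\int_M\bigl(\lambda g+f\bigr)$. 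Substituting into the displayed inequality, everything cancels and the claim reduces to the trivial identity $\int_M(\lambda g+f)=\lambda\int_M g+\int_M f$. The only subtlety here is justifying the fibre domination for $K+L$ on the full base $(1+\lambda)M$, which follows from convexity: a point $y\in(1+\lambda)M$ can be written as $y=\lambda x+x$ with $x\in M$, and the fibres over $\lambda x$ (inside $K$) and over $x$ (inside $L$) add up inside $K+L$ over $y$; the possible gain comes precisely from the points of the base where the fibres of $K$ and $L$ do not sit "vertically above" their projections, i.e. from the tilt of $K$ or $L$ relative to $u$.

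Thus the only nontrivial step is the strict part, and in fact the above shows that the inequality is simply \emph{true} with plenty of room unless both bodies are straight cylinders — so the bulk of the work is the \emph{second} assertion: exhibiting, for each $n\geq 3$, bodies $K,L\in\K^n_n$ with equal, nondegenerate projections for which \eqref{ineq equal projec} fails. Here I would recycle the examples already invoked in the paper — the construction from \cite[Proposition 4.1]{AAFO} that answers Question~\ref{Q1} in the negative, respectively the bodies of \cite{FGM}/\cite{FMMZ} — since failure of \eqref{ineq Q2} for a body $K$ (equivalently, $P_{u^\perp}(K)$ not being a Cheeger set, by the mechanism behind Theorems~\ref{th: bounded by Cheeger}–\ref{th intro: n=3}) is exactly the obstruction to \eqref{e: 1}: taking $L=[\underline 0,u]$ a segment in the failing direction would be the natural first attempt, except that its projection is a point, not $(n-1)$-dimensional, so instead I would take $L$ to be a thin orthogonal cylinder $C_\varepsilon$ over $P_{u^\perp}(K)$ (a genuine $(n-1)$-dimensional, indeed equal-measure-up-to-rescaling projection after the homothety normalisation), let $\varepsilon\to0$, and check that the right-hand side of \eqref{ineq equal projec} converges to $\frac{\H^n(K)}{\H^{n-1}(P_{u^\perp}(K))}$, which strictly exceeds $\frac{\H^{n-1}(P_{u^\perp}(K))}{\H^{n-2}(\partial P_{u^\perp}(K))}=\lim_{h\to\infty}\frac{\H^n(C_h)}{\H^{n-1}(\partial C_h)}\geq\frac{\H^n(C_h)}{\H^{n-1}(\partial C_h)}$, the last quantity being what the left-hand side degenerates to; the contradiction with \eqref{x} for $C_h$ large then forces \eqref{ineq equal projec} to fail for $\varepsilon$ small. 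I expect the main obstacle to be precisely this limiting/degeneration bookkeeping — making sure the projections stay homothetic (or at worst equal-measure) along the family, and that the boundary terms in \eqref{x} behave as claimed — rather than any deep inequality; the Brunn–Minkowski content of the positive part is, by the reduction above, essentially vacuous, which is itself worth remarking on in the proof.
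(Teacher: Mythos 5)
Your proof of the first assertion is correct and follows essentially the same route as the paper's: after normalising the homothety so that $P_{u^\perp}(K)=\lambda P_{u^\perp}(L)=\lambda M$, both arguments come down to the fact that suitable dilates of $K$ and $L$ lie in a common canal class and that volume is superadditive along the fibres in direction $u$. The paper invokes the linear refinement of the Brunn--Minkowski inequality (Theorem~\ref{th: linear BM}) as a black box, applied to $\frac{1}{1-\lambda}K$ and $\frac{1}{\lambda}L$; you instead prove the relevant special case directly via the fibre-length computation, and your bookkeeping $\H^n(K+L)\ge(1+\lambda)^{n-1}\bigl(\lambda^{-(n-1)}\H^n(K)+\H^n(L)\bigr)$ checks out. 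That is a legitimate, slightly more self-contained variant of the same idea.

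The second assertion is where the proposal breaks down, and the failure is structural rather than a matter of ``limiting bookkeeping''. Your candidate $L$ is a thin cylinder over $P_{u^\perp}(K)$ (possibly rescaled), so $P_{u^\perp}(L)$ is a homothet of $P_{u^\perp}(K)$. But then the pair $(K,L)$ satisfies the hypothesis of the first part of the theorem, which you have just proved, so \eqref{ineq equal projec} \emph{holds} for it; no choice of $\varepsilon$ can make it fail. Any counterexample must have projections of equal $(n-1)$-volume that are \emph{not} homothetic --- that is exactly the content of the ``moreover'' clause. You have also conflated two different inequalities: \eqref{ineq Q2} and \eqref{ineq connected} carry surface areas $\H^{n-1}(\partial\,\cdot)$ in their denominators, whereas \eqref{e: 1} carries projection volumes $\H^{n-1}(P_{u^\perp}(\cdot))$; the Cheeger-set obstruction governs the former, not the latter, and taking $L$ a segment in direction $u$ makes the denominator in \eqref{e: 1} vanish rather than reproducing \eqref{ineq Q2}. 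The paper's counterexample is of a genuinely different nature: with $u=e_1$, it takes a fixed $K$ (built from $\conv\{\underline{0},e_1,e_2,e_1+e_2,e_3\}$) for which $\H^n(K)/\H^{n-1}(P_{e_1^\perp}(K))$ strictly exceeds $\H^{n-1}(P_{e_2^\perp}(K))/\H^{n-2}(P_{e_2^\perp}(P_{e_1^\perp}(K)))$, and for $L$ a box that is very long in the direction $e_2$ \emph{inside} $u^\perp$ and thin in the remaining directions, normalised so that $\H^{n-1}(P_{e_1^\perp}(L))=\H^{n-1}(P_{e_1^\perp}(K))=1$. As the box is stretched, the left-hand side of \eqref{ineq equal projec} tends to the smaller of the two quantities above, while the right-hand side remains at least $\H^n(K)/\H^{n-1}(P_{e_1^\perp}(K))$. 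This idea --- stretching within the hyperplane $u^\perp$ so that the projections keep equal volume but acquire very different shapes --- is the essential ingredient missing from your plan.
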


The paper is organized as follows. In Section \ref{s:2} we introduce all necessary notions, known results, and tools, which will be used throughout the proofs of the main results. In Section \ref{s:3}, we focus on negative answers to Question \ref{Q1} and we prove Theorem \ref{th: bounded by Cheeger}. In Section \ref{s:4}, we focus on the characterization results for a positive answer to Question~\ref{Q1}. Finally, in Section \ref{s:5} we deal with inequality \eqref{e: 1} and prove Theorem \ref{Equal projec}.

\section{Preliminaries and background}\label{s:2}

Our ambient space is the $n$-dimensional Euclidean space $\R^n$, endowed with the standard inner product $\langle \cdot ,\cdot \rangle$ and the associated Euclidean norm $||\cdot||$. We set $B_n$ as the $n$-dimensional unit ball, and $\sfe$ as its boundary, the Euclidean unit sphere of $\R^n$. The origin is denoted by $\underline{0}$, and for $1\leq i\leq n$, we denote by $e_i$ the $i$-th vector of the standard canonical basis of $\R^n$.

We recall that $\K^n$ is the set of all convex and compact nonempty subsets of $\R^n$, and $\K^n_n$ the set of all full-dimensional convex bodies in $\R^n$, i.e., convex bodies with nonempty interior. We note that if $K\in\K^n$ and $\dim K=n$, i.e., if $K\in\K^n_n$, then $\H^n(K)$, the $n$-dimensional (outer) Hausdorff measure of $K$, coincides with the volume or Lebesgue measure of $K$, and $\H^{n-1}(\partial K)$ is its surface area. If $K\in\K^n$ and $\dim K=n-1$, then $\H^{n-1}(K)$ is  
the $(n-1)$-dimensional volume of $K$. 
When the dimension of $K\in\K^n$ is strictly less than $n$, we will consider the relative topology with respect to $\aff(K)$, the affine hull of $K$. We will denote by $\partial K$ the boundary of $K$ relative to $\aff(K)$.

As mentioned in the introduction, a positive answer to Question \ref{Q1} turns out to be connected to the theory of Cheeger sets. 

\begin{defn}
Let $K\in \K^n_n$ be a full-dimensional convex body in $\R^n$. The Cheeger constant of $K$ is defined as
\[
h(K):=\sup\left\{ \frac{\H^n(C)}{\H^{n-1}(\partial C)}:\ C\subset K,\ C \text{ closed},\ \H^{n-1}(\partial C)>0 \right\}.
\]

A closed subset $C\subset K$, for which $h(K)=\frac{\H^n(C)}{\H^{n-1}(\partial C)}$ is called a Cheeger set of $K$. If $h(K)=\frac{\H^n(K)}{\H^{n-1}(\partial K)}$, then $K$ itself is said to be a Cheeger set.
\end{defn}

There is a vast amount of literature concerning Cheeger sets, especially in the plane, where they are, in general, well understood (cf. \cite{Kawohl}). We notice that, in the literature, the definition of the Cheeger constant of an arbitrary set $\Omega\subset \R^n$ seems to differ in the assumption $C\subset \Omega$ being closed. However, if $K\in\K^n$ is a convex body, it makes no difference, as Proposition \ref{Prop Cheeger convex body} shows.

The following proposition contains only the facts about Cheeger sets that will be needed for this paper. We refer the interested reader to the cited work and the references therein.

\begin{proposition}[{\cite[Proposition 5.2]{Parini}}]\label{Prop Cheeger convex body}
Let $\Omega\in\K^n$. There exists a unique Cheeger set $\Cs(\Omega)$ of $\Omega$. Furthermore, 
$\Cs(\Omega)$ is also convex.
\end{proposition}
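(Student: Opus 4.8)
The plan is to run the direct method of the calculus of variations, exploiting the convexity of $\Omega$ to cut the problem down to convex competitors, and then to derive uniqueness from the submodularity of the perimeter together with the regularity of the free boundary. After replacing $\R^n$ by $\aff\Omega$ we may assume $\dim\Omega=n$. The basic observation is that for \emph{every} closed $C\subseteq\Omega$ one has $\conv C\subseteq\Omega$ --- this is where convexity of $\Omega$ is used --- together with $\H^n(\conv C)\ge\H^n(C)$ and $\H^{n-1}(\partial(\conv C))\le\H^{n-1}(\partial C)$, the last being the classical fact that passing to the convex hull does not increase surface area. Hence the supremum defining $\Cc(\Omega)$ is unchanged if it is taken only over full-dimensional convex bodies $C\subseteq\Omega$, and it does not matter whether $C$ is required to be closed. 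Moreover, if $C$ is closed, not convex, and has $\H^n(C)>0$, then $\conv C\setminus C$ is a nonempty relatively open, hence positive-measure, subset of $\conv C$, so the ratio strictly increases; since any Cheeger set has positive ratio and is therefore full-dimensional, this already proves that every Cheeger set of $\Omega$ is convex.

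For existence, take a maximizing sequence $(C_k)$ of convex bodies contained in $\Omega$. Since $\Cc(\Omega)\ge\H^n(\Omega)/\H^{n-1}(\partial\Omega)>0$ and the $C_k$ lie in the bounded set $\Omega$, the isoperimetric inequality rules out $\H^{n-1}(\partial C_k)\to 0$ along any subsequence; by the Blaschke selection theorem a subsequence converges in the Hausdorff metric to some $C_\infty\in\K^n_n$ with $C_\infty\subseteq\Omega$, and since $\H^n$ and $\H^{n-1}(\partial\,\cdot)$ are continuous on $\K^n_n$ with respect to the Hausdorff metric, $C_\infty$ attains $\Cc(\Omega)$ and is a convex Cheeger set.

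Uniqueness is the delicate point. A first reduction is that the Cheeger sets of $\Omega$ form a lattice: if $C_1,C_2$ are Cheeger sets, then combining $\H^n(C_1\cup C_2)+\H^n(C_1\cap C_2)=\H^n(C_1)+\H^n(C_2)$, the submodularity $\H^{n-1}(\partial(C_1\cup C_2))+\H^{n-1}(\partial(C_1\cap C_2))\le\H^{n-1}(\partial C_1)+\H^{n-1}(\partial C_2)$ of the (distributional) perimeter, and the inequality $\H^n(E)\le\Cc(\Omega)\,\H^{n-1}(\partial E)$ valid for every closed $E\subseteq\Omega$ (again via the convex-hull trick), forces both $C_1\cup C_2$ and $C_1\cap C_2$ to be Cheeger sets --- in particular $C_1\cup C_2$ is convex. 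Let $C^{\ast}$ be a Cheeger set of maximal volume; it exists because the Cheeger sets form a Hausdorff-closed, hence compact, subfamily of the convex bodies contained in $\Omega$ and the volume is continuous. For an arbitrary Cheeger set $C$, the set $C\cup C^{\ast}$ is then also a Cheeger set, so maximality forces $\H^n(C\cup C^{\ast})=\H^n(C^{\ast})$, i.e. $\H^n(C\setminus C^{\ast})=0$; as $C\setminus C^{\ast}$ is relatively open in the body $C$ it must be empty, so $C\subseteq C^{\ast}$.

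The remaining step --- excluding $C\subsetneq C^{\ast}$ --- is where the real difficulty lies, since the volume/perimeter bookkeeping above does not separate $C$ from $C^{\ast}$. Here one invokes the Euler--Lagrange condition for the isoperimetric ratio: the free boundary $\partial C\cap\inter\Omega$ of a Cheeger set $C$ is, by the regularity theory for almost-minimizers of the perimeter, a $C^{1,1}$ hypersurface of constant mean curvature $1/\Cc(\Omega)$ (up to the dimensional normalization), and, combined with the convexity of $\Omega$, this identifies any Cheeger set with the single set $\{x\in\Omega:\ x+rB_n\subseteq\Omega\}+rB_n$, where $r>0$ is uniquely determined by a volume identity. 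This is the Alter--Caselles description, reproduced in \cite{Parini}, which yields uniqueness and the convexity of $\Cs(\Omega)$ at one stroke. I expect this regularity-and-geometry step to be the main obstacle; everything preceding it is comparatively soft.
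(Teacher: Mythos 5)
The paper offers no proof of this proposition: it is imported wholesale from \cite{Parini} (Proposition 5.2 there), so your attempt can only be judged on its own terms, and it contains two genuine gaps.

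The foundational one is the ``classical fact'' that $\H^{n-1}(\partial(\conv C))\le\H^{n-1}(\partial C)$ for every closed $C\subseteq\Omega$. This is false. It fails for disconnected sets already in the plane (two tiny disks far apart have small total perimeter, while their convex hull is a long stadium), and for $n\ge 3$ it fails even for connected compact sets: a small ball with thin tubular spikes reaching the vertices of a large simplex has topological boundary of arbitrarily small $\H^{n-1}$-measure, while its convex hull contains the simplex and hence, by monotonicity of surface area for convex bodies, has surface area bounded below. Everything you build on this observation collapses: the reduction of the supremum to convex competitors, the claim that closedness is immaterial, and above all the assertion that this ``already proves that every Cheeger set is convex.'' Convexity of the Cheeger set of a convex body is not a soft consequence of hull replacement; it is part of the hard content of the Alter--Caselles theorem and is obtained through the BV/variational machinery. (A side remark: the inequality $\H^n(E)\le\Cc(\Omega)\,\H^{n-1}(\partial E)$ for closed $E\subseteq\Omega$ needs no convex-hull trick at all --- it is the definition of $\Cc(\Omega)$ as a supremum.)

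The second gap is in the step you yourself flag as the crux. The identification of the Cheeger set with $\{x\in\Omega:\ x+rB_n\subseteq\Omega\}+rB_n$ is the \emph{planar} characterization of Kawohl and Lachand-Robert; for $n\ge 3$ the free boundary is only a constant-mean-curvature hypersurface, need not consist of spherical caps, and no such inner-parallel-body formula holds, so this cannot ``yield uniqueness and convexity at one stroke'' in the dimensions the paper actually uses. Uniqueness in general dimension (Alter--Caselles) is proved by a different route; your submodularity argument producing a maximal Cheeger set $C^{\ast}$ containing all others is a reasonable first half of such a route (modulo the caveat that submodularity is a statement about distributional perimeter, not about $\H^{n-1}$ of topological boundaries, and that $C_1\cap C_2$ may be negligible), but the half that separates $C$ from $C^{\ast}$ is exactly what remains unproved. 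As written, the proposal neither constitutes a proof nor a correct reduction to the cited literature.
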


We remark that, if $\Omega\subset \R^n$ is not convex, examples show that the Cheeger set may not be unique (or even exist), see \cite{Parini}.

The Brunn-Minkowski inequality is one of the most well-known and powerful inequalities within the theory of convex bodies. In essence, it provides the concavity of the $n$-th root of the volume.
\begin{theorem}[{\cite[Theorem 7.1.1]{Sch}}]\label{th: BM}
		Let $K,L\in\K^n$ be two convex bodies. The inequality 
\begin{equation}\label{ineq: BM}
			\H^n\bigl((1-\lambda) K+\lambda L\bigr)^{1/n}\geq (1-\lambda) \H^n(K)^{1/n}+\lambda \H^n(L)^{1/n}
		\end{equation}
	holds true for every $\lambda\in[0,1]$.
  Equality for some $\lambda \in (0,1)$ holds if and only if $K$ and $L$ either lie in parallel hyperplanes or are homothetic.
\end{theorem}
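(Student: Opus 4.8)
The plan is to establish the inequality \eqref{ineq: BM} first by a measure‑transport argument and then to treat the equality characterization, which is where the genuine difficulty lies.

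\emph{Reductions.} Since $\H^n$ is positively homogeneous of degree $n$, it suffices to prove the additive form $\H^n(K+L)^{1/n}\ge \H^n(K)^{1/n}+\H^n(L)^{1/n}$; substituting $(1-\lambda)K$ and $\lambda L$ then recovers \eqref{ineq: BM}. Next I would dispose of the degenerate cases: if $\dim K<n$ then $\H^n(K)=0$, and the additive inequality reduces to $\H^n(K+L)^{1/n}\ge \H^n(L)^{1/n}$, which is immediate from $K+L\supseteq \{x\}+L$ for any $x\in K$ (and symmetrically if $\dim L<n$). Hence one may assume both $K$ and $L$ are full-dimensional with positive volume.

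\emph{The inequality.} For full-dimensional $K,L$ I would invoke the Knothe transport map $T=(T_1,\dots,T_n)\colon K\to L$: a triangular map whose Jacobian $DT$ is upper triangular with positive diagonal entries and constant determinant $\det DT=\H^n(L)/\H^n(K)=:c$ a.e. The auxiliary map $S_\lambda(x)=(1-\lambda)x+\lambda T(x)$ sends $K$ into $(1-\lambda)K+\lambda L$, is triangular and strictly increasing in each successive variable (hence injective), and has diagonal Jacobian entries $(1-\lambda)+\lambda\,\partial_i T_i$. By the area formula and the elementary inequality $\prod_i(a_i+b_i)^{1/n}\ge \prod_i a_i^{1/n}+\prod_i b_i^{1/n}$ (a direct consequence of AM--GM), applied with $a_i=(1-\lambda)$ and $b_i=\lambda\,\partial_i T_i$, I obtain
\[
\H^n\bigl((1-\lambda)K+\lambda L\bigr)\ \ge\ \H^n\bigl(S_\lambda(K)\bigr)\ =\ \int_K \prod_{i=1}^n\bigl((1-\lambda)+\lambda\,\partial_i T_i\bigr)\,dx\ \ge\ \int_K\Bigl((1-\lambda)+\lambda\,(\det DT)^{1/n}\Bigr)^n dx.
\]
Since $\det DT\equiv c$ is constant, the right-hand integral equals $\bigl((1-\lambda)\H^n(K)^{1/n}+\lambda\H^n(L)^{1/n}\bigr)^n$, and taking $n$-th roots yields \eqref{ineq: BM}. (An elementary alternative is the Hadwiger--Ohmann argument: verify the inequality for axis-parallel boxes directly via AM--GM, extend to finite unions of boxes by induction on their number using a hyperplane that splits $K$ and $L$ in a common volume ratio, and pass to arbitrary convex bodies by approximation with dyadic box unions.)

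\emph{The equality case.} This is the \emph{main obstacle}. In the full-dimensional case, equality in \eqref{ineq: BM} for some $\lambda\in(0,1)$ forces equality in the elementary product inequality for a.e.\ $x\in K$; since equality there holds only when $(a_1,\dots,a_n)$ and $(b_1,\dots,b_n)$ are proportional, the constant vector $a_i=(1-\lambda)$ forces all diagonal entries $\partial_i T_i$ to coincide a.e., and hence to equal $c^{1/n}$. Exploiting the triangular structure of the Knothe map, $\partial_1 T_1=c^{1/n}$ makes $T_1$ affine in $x_1$; inductively the equalities $\partial_i T_i=c^{1/n}$ propagate to show that $T$ is the affine map $x\mapsto c^{1/n}x+b$, whence $L=T(K)=c^{1/n}K+b$ is a homothet of $K$. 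The delicate point here, and the reason this step is the hard part, is the low regularity of the Knothe map: making rigorous both the pointwise equality analysis and the deduction that $T$ is globally affine requires the monotone/triangular structure together with a careful measure-theoretic argument justifying the a.e.\ differentiation and its integration. Finally, the degenerate directions are settled by inspection of dimensions: when $\H^n(K)=\H^n(L)=0$, equality means $\H^n\bigl((1-\lambda)K+\lambda L\bigr)=0$, i.e.\ the Minkowski combination is again lower-dimensional, which is equivalent to $\lin(K-K)+\lin(L-L)\ne\R^n$, that is, to $K$ and $L$ possessing a common normal direction and hence lying in parallel hyperplanes; the remaining mixed configurations are checked directly and matched to the ``parallel hyperplanes'' alternative.
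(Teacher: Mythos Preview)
The paper does not prove Theorem~\ref{th: BM}; it is quoted from \cite[Theorem~7.1.1]{Sch} as background, so there is no in-paper argument to compare against.

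On its own merits, your proof of the inequality via the Knothe map is standard and correct. The equality case, however, has a genuine gap that is structural rather than merely regularity-theoretic. From equality in the AM--GM step you correctly obtain that all diagonal entries satisfy $\partial_i T_i=c^{1/n}$ a.e., hence $T_i(x)=c^{1/n}x_i+g_i(x_1,\dots,x_{i-1})$. But this does \emph{not} by itself force $T$ to be a homothety: the off-diagonal functions $g_i$ are left completely unconstrained by that step. Concretely, take $K=[0,1]^2$ and $L=\{(x_1,x_2+ax_1):x\in[0,1]^2\}$ with $a\ne 0$; the Knothe map is the shear $T(x_1,x_2)=(x_1,x_2+ax_1)$, both diagonal entries equal $1$, yet $K$ and $L$ are not homothetic (and indeed Brunn--Minkowski is strict for this pair). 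What is missing is the use of the \emph{other} equality, namely $\H^n(S_\lambda(K))=\H^n((1-\lambda)K+\lambda L)$, which says the diagonal image already exhausts the full Minkowski combination; combining this with convexity of $K$ and $L$ is where the real work lies, and your sketch does not carry it out. The cleaner routes to the equality characterization are via the Brenier map (gradient of a convex function) or via Minkowski's mixed-volume inequality as in \cite{Sch}.

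A small correction on the degenerate side: in the mixed case $\H^n(K)>0=\H^n(L)$, equality forces $L$ to be a single point, which falls under ``homothetic'' (ratio $0$), not under ``parallel hyperplanes'', since a full-dimensional $K$ lies in no hyperplane.
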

We refer to \cite{Gardner} for an exhaustive survey on the Brunn-Minkowski inequality, and to \cite[Chapters 7 and 9]{Sch} for a thorough analysis of the Brunn-Minkowski inequality.

When the convex bodies involved in the above inequality share a common projection onto a hyperplane $u^\perp$, for some $u\in\sfe$, the Brunn-Minkowski inequality admits a refinement, also referred to as a linear version of it. By a linear version of the inequality \eqref{ineq: BM}, we mean an inequality of the form
\begin{equation}\label{linear BM}
	\H^n\bigl((1-\lambda) K+\lambda L\bigr)\geq (1-\lambda)\H^n(K)+ \lambda\H^n(L).
\end{equation}   
Since the function $\alpha \mapsto \alpha^{1/n}$ is concave for $\alpha \ge 0$, we have
\begin{equation*}
	(1-\lambda)\H^n(K)+ \lambda\H^n(L)\geq \left((1-\lambda)\H^n(K)^{1/n}+ \lambda\H^n(L)^{1/n}\right)^n
	\end{equation*}
for every $K,L\in \K^n$ and $\lambda\in [0,1]$.
Thus, if \eqref{linear BM} holds for $K,L\in\K^n$, then \eqref{linear BM} is a refinement of the Brunn-Minkowski inequality \eqref{ineq: BM}. Taking this into account, the main task in this context consists of finding, if it exists, a suitable family of convex bodies for which \eqref{linear BM} holds. Equal $(n-1)$-dimensional 
common orthogonal projections onto a given hyperplane, or, in other words, canal classes, appear to be one useful condition defining such families. In \cite[Section 7.7]{Sch}, several results in this direction are presented. For completeness, we recall the definition of canal class given in the introduction.
	
\begin{defn}[{\cite[Section 7.7]{Sch}}]
Let $u\in \sfe$ be a unit vector and let $C \in \K^n$ be an $(n-1)$-dimensional convex body such that $C\subset u^\perp$. The canal class of $C$, with respect to $u\in\sfe$, denoted by $\K_C$, is the family of convex bodies 
\[
\K_C:=\{ K\in \K^n_n:\ P_{u^\perp}(K)=C \}.
\]
\end{defn}

As stated above, a linear refinement of the Brunn-Minkowski inequality follows, assuming that both convex bodies $K,L$ belong to the same canal class for some $u\in\sfe$ and an $(n-1)$-dimensional $C\in\K^n$, $C\subset u^\perp$. Indeed, a significantly milder assumption, namely, that the $(n-1)$-dimensional volumes of the projections of $K$ and $L$ onto $u^\perp$ coincide, also allows linear refinement.

\begin{theorem}[{\cite[Section 50]{Bonnesen}, \cite[p. 42]{AAGM}\label{th: linear BM}}]
	Let $K,L\in\K^n$ be convex bodies and let $\lambda\in[0,1]$.
 If $\H^{n-1}(P_{u^\perp}(K))=\H^{n-1}(P_{u^\perp}(L))$ for some $u\in \sfe$, then the inequality
 \begin{equation}\label{ineq: linear BM}
\H^n\bigl( (1-\lambda)K+\lambda L\bigr)\geq  (1-\lambda)\H^n(K)+\lambda\H^n(L)
\end{equation}
holds for every $\lambda\in [0,1]$.
\end{theorem}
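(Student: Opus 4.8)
# Proof Proposal for Theorem~\ref{th: linear BM}

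The plan is to reduce the case of equal projection volumes to the case of equal projections treated by the Brunn--Minkowski theory, via a shear-and-Fubini argument. Fix $u \in \sfe$; after a rotation we may assume $u = e_n$, so that $u^\perp$ is identified with $\R^{n-1}$ and projections are projections onto the first $n-1$ coordinates. Write $C_K := P_{u^\perp}(K)$ and $C_L := P_{u^\perp}(L)$, and let $\alpha := \H^{n-1}(C_K) = \H^{n-1}(C_L)$. If $\alpha = 0$ the claim is degenerate (both sides may vanish or one reduces to the trivial inclusion $(1-\lambda)K + \lambda L \supseteq (1-\lambda)K + \lambda\{x\}$ for $x \in L$), so assume $\alpha > 0$, i.e.\ $K, L \in \K^n_n$.

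The first key step is to slice: for a convex body $M$ with projection $C_M$ onto $\R^{n-1}$, and for $x \in \relint C_M$, the fibre $M_x := \{ t \in \R : (x,t) \in M \}$ is a (possibly degenerate) closed interval, and Fubini gives $\H^n(M) = \int_{C_M} \ell_M(x)\,dx$ where $\ell_M(x) := \H^1(M_x)$ is the length of the fibre. The second key step is the additivity of fibre lengths under Minkowski addition in the $u$-direction: since the fibre of $K+L$ above a point $x + y$ with $x \in C_K$, $y \in C_L$ contains $K_x + L_y$, one gets $\ell_{K+L}(x+y) \ge \ell_K(x) + \ell_L(y)$; more carefully, one uses that $(1-\lambda)K + \lambda L$ has, above $(1-\lambda)x + \lambda y$, a fibre containing $(1-\lambda)K_x + \lambda L_y$, so $\ell_{(1-\lambda)K+\lambda L}\big((1-\lambda)x+\lambda y\big) \ge (1-\lambda)\ell_K(x) + \lambda \ell_L(y)$. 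The third step applies Theorem~\ref{th: BM} (Brunn--Minkowski) in dimension $n-1$ to the base: $\H^{n-1}\big(P_{u^\perp}((1-\lambda)K+\lambda L)\big) = \H^{n-1}\big((1-\lambda)C_K + \lambda C_L\big) \ge \big((1-\lambda)\alpha^{1/(n-1)} + \lambda\alpha^{1/(n-1)}\big)^{n-1} = \alpha$, so the base of the sum has volume at least $\alpha$.

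To combine these, I would argue as follows. By a measure-preserving shear in the $e_n$-direction (which affects neither $\H^n$, nor $\H^{n-1}$ of any projection onto $u^\perp$, nor the fibre-length functions $\ell$, and which commutes with Minkowski addition after matching parametrisations) one can reduce to a convenient normalisation; but the cleanest route avoids shears entirely and instead uses a transport/rearrangement between $C_K$ and $C_L$. Concretely, choose a measurable bijection $\varphi: C_K \to C_L$ (up to null sets) that pushes the normalised Lebesgue measure on $C_K$ to that on $C_L$ — since $\H^{n-1}(C_K) = \H^{n-1}(C_L) = \alpha$, this is just a volume-preserving map. Then the convex body $(1-\lambda)K + \lambda L$ contains, for each $x \in C_K$, the segment $\{(1-\lambda)x + \lambda\varphi(x)\} \times \big((1-\lambda)K_x + \lambda L_{\varphi(x)}\big)$, whence, writing $\psi(x) := (1-\lambda)x + \lambda\varphi(x)$ and using that $\psi$ is injective onto a subset of $P_{u^\perp}((1-\lambda)K+\lambda L)$ of the correct measure,
\[
\H^n\big((1-\lambda)K+\lambda L\big) \ \ge\ \int_{C_K} \Big[(1-\lambda)\ell_K(x) + \lambda\, \ell_L(\varphi(x))\Big]\, \big|\det D\psi(x)\big|\, dx.
\]
The technical heart is then to control $|\det D\psi(x)|$ from below in an averaged sense; this is exactly where equality of base volumes — together with the multilinearity/concavity of the determinant along the segment from the identity to $D\varphi$ — is used, via the Brunn--Minkowski computation above, to conclude that the right-hand side is at least $(1-\lambda)\int_{C_K}\ell_K + \lambda\int_{C_L}\ell_L = (1-\lambda)\H^n(K) + \lambda\H^n(L)$.

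The main obstacle is precisely this last point: making the Jacobian bookkeeping rigorous so that the ``averaged determinant'' argument really reproduces the $(n-1)$-dimensional Brunn--Minkowski bound on the base while simultaneously respecting the fibre-length additivity. I expect the clean way to sidestep the transport map is to invoke Theorem~\ref{th: BM} directly on the base and then argue by a scaling/Fubini decomposition — writing $K$ (resp.\ $L$) as the union of thin cylindrical slabs over a fine partition of $C_K$ (resp.\ $C_L$) into pieces of equal area, applying fibre-additivity slab-by-slab, and passing to the limit — but the honest statement is that this is the step requiring care, and the cited sources \cite[Section 50]{Bonnesen}, \cite[p.~42]{AAGM} presumably handle it by exactly such a Fubini-plus-Brunn--Minkowski argument in the base.
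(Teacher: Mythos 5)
The paper does not prove Theorem~\ref{th: linear BM}: it is quoted from \cite[Section 50]{Bonnesen} and \cite[p.~42]{AAGM}, so there is no internal proof to compare against, and your proposal must stand on its own. Its reductions are sound --- the fibre decomposition $\H^n(K)=\int_{C_K}\ell_K$, the fibre additivity $\ell_{(1-\lambda)K+\lambda L}\bigl((1-\lambda)x+\lambda y\bigr)\ge(1-\lambda)\ell_K(x)+\lambda\ell_L(y)$, and the change-of-variables set-up are all correct --- but the step you yourself flag as the ``technical heart'' is a genuine gap, and it cannot be closed with the map you propose. For an arbitrary measurable volume-preserving bijection $\varphi:C_K\to C_L$, the map $\psi=(1-\lambda)\,\mathrm{id}+\lambda\varphi$ need not be injective (so your first displayed inequality, which is a change of variables onto the image, already fails), $D\varphi$ need not exist or be positive semidefinite, and $\det D\psi$ can be far below $1$ on a set of large measure; no ``averaged'' determinant bound rescues the computation, because the weight $(1-\lambda)\ell_K+\lambda\,\ell_L\circ\varphi$ is not constant. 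The same objection defeats the suggested slab decomposition: Brunn--Minkowski applied to equal-area pieces $P_i\subset C_K$, $Q_i\subset C_L$ gives $\H^{n-1}\bigl((1-\lambda)P_i+\lambda Q_i\bigr)\ge\H^{n-1}(P_i)$, but these image sets can overlap heavily, so their measures cannot be added.

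The missing idea is that $\varphi$ must be the \emph{monotone} (Brenier) map: $\varphi=\nabla\Psi$ with $\Psi$ convex, pushing Lebesgue measure on $C_K$ forward to Lebesgue measure on $C_L$ --- and this is exactly where the hypothesis $\H^{n-1}(C_K)=\H^{n-1}(C_L)$ enters, since the two measures must have equal total mass. Then the Monge--Amp\`ere equation gives $\det D^2\Psi=1$ almost everywhere, Minkowski's determinant inequality for positive semidefinite matrices gives
\[
\det\bigl((1-\lambda)I+\lambda D^2\Psi(x)\bigr)^{1/(n-1)}\ \ge\ (1-\lambda)\det(I)^{1/(n-1)}+\lambda\det\bigl(D^2\Psi(x)\bigr)^{1/(n-1)}\ =\ 1,
\]
so $\det D\psi\ge1$ \emph{pointwise}, not merely on average; moreover $\psi$ is injective as the gradient of the strictly convex function $(1-\lambda)\tfrac{|x|^2}{2}+\lambda\Psi$. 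Your displayed bound then yields $(1-\lambda)\H^n(K)+\lambda\H^n(L)$ at once, since $\int_{C_K}\ell_L\circ\varphi=\int_{C_L}\ell_L$ by measure preservation. (The usual regularity caveats of the mass-transport proof of Brunn--Minkowski --- Alexandrov second derivatives and the one-sided area formula --- apply verbatim and must be invoked.) This completion is legitimate, though anachronistic relative to the cited sources, whose classical arguments slice $K$ and $L$ by hyperplanes orthogonal to $u$ and exploit Brunn's concavity of $t\mapsto\H^{n-1}\bigl(K\cap(u^\perp+tu)\bigr)^{1/(n-1)}$ rather than any transport map.
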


Clearly, if $K,L\in\K_C$ for some $C\subset u^\perp\subset\R^n$, and $u\in\sfe$, Theorem \ref{th: linear BM} applies.
We refer also to \cite[Section 7.7]{Sch} and \cite{Bonnesen, Oh} for further comments and also classical aspects, and to \cite{CSGYN, DVLSG, HCYN2017, Yepes} for recent developments about linear refinements of Brunn-Minkowski type inequalities.

As a direct application of the latter theorem, we will be able to prove Theorem \ref{Equal projec}.

The following particular case of \cite[Theorem~1.1]{FGM} will be needed for proving Theorem \ref{th intro: n=3}.

\begin{lemma}[{\cite[Theorem 1.1]{FGM}}]\label{FGM}
Let $K,L \in \K^2$ be $2$-dimensional convex bodies in the plane. Then
\begin{equation}\label{ineq FGM} \nonumber   
	\frac{\H^2(K+L)}{\H^1(\partial(K+L))}\geq \frac{\H^2(K)}{\H^1(\partial K)}+\frac{\H^2(L)}{\H^1(\partial L)}\,.
\end{equation}	
\end{lemma}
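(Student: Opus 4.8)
The plan is to derive the inequality from the linear refinement of the Brunn--Minkowski inequality for bodies with projections of equal measure, i.e.\ from Theorem~\ref{th: linear BM}, which is already available when $n=2$. The first step is to recast the claim in terms of mixed areas. For planar convex bodies $K,L$ one has $\H^1(\partial(K+L))=\H^1(\partial K)+\H^1(\partial L)$ and $\H^2(K+L)=\H^2(K)+2\,\V(K,L)+\H^2(L)$, where $\V(K,L)\ge 0$ denotes the mixed area of $K$ and $L$. Since the perimeters are positive, the asserted inequality is equivalent to
\[
2\,\V(K,L)\,\H^1(\partial K)\,\H^1(\partial L)\ \ge\ \H^2(K)\,\H^1(\partial L)^2+\H^2(L)\,\H^1(\partial K)^2 .
\]
Replacing $L$ by $tL$ for $t>0$ multiplies both sides of this inequality by $t^2$ (as $\V(K,tL)=t\,\V(K,L)$, $\H^2(tL)=t^2\H^2(L)$ and $\H^1(\partial(tL))=t\,\H^1(\partial L)$), so it suffices to prove it when $\H^1(\partial K)=\H^1(\partial L)$, in which case it reduces to $2\,\V(K,L)\ge\H^2(K)+\H^2(L)$.

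To handle this equal-perimeter case I would first produce a direction in which the two projections have equal length. By Cauchy's formula, $\int_{\mathbb{S}^1}\H^1\!\big(P_{u^\perp}(M)\big)\,d\H^1(u)=c\,\H^1(\partial M)$ for a positive constant $c$ independent of the planar convex body $M$, so that
\[
\int_{\mathbb{S}^1}\big[\H^1(P_{u^\perp}(K))-\H^1(P_{u^\perp}(L))\big]\,d\H^1(u)=0 .
\]
The integrand is continuous on $\mathbb{S}^1$ and has vanishing mean, so it is either identically zero or takes both positive and negative values; in either case it has a zero, i.e.\ there is $u_0\in\mathbb{S}^1$ with $\H^1(P_{u_0^\perp}(K))=\H^1(P_{u_0^\perp}(L))$. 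Applying Theorem~\ref{th: linear BM} with this $u_0$ and $\lambda=\tfrac12$ gives $\H^2\!\big(\tfrac12K+\tfrac12L\big)\ge\tfrac12\H^2(K)+\tfrac12\H^2(L)$, whereas expanding the area of a Minkowski sum gives $\H^2\!\big(\tfrac12K+\tfrac12L\big)=\tfrac14\H^2(K+L)=\tfrac14\H^2(K)+\tfrac12\V(K,L)+\tfrac14\H^2(L)$. Comparing the two expressions yields $2\,\V(K,L)\ge\H^2(K)+\H^2(L)$, and undoing the scaling $L\mapsto tL$ with $t=\H^1(\partial K)/\H^1(\partial L)$ and regrouping terms then recovers the original inequality.

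The difficulty I anticipate is conceptual rather than computational: Minkowski's first inequality $\V(K,L)\ge\sqrt{\H^2(K)\,\H^2(L)}$ is by itself too weak here, since it yields only $2\,\V(K,L)\ge 2\sqrt{\H^2(K)\,\H^2(L)}$, which falls short of $\H^2(K)+\H^2(L)$ whenever the two areas differ; the perimeter hypothesis must therefore be exploited essentially, and the mechanism for doing so is the linear Brunn--Minkowski inequality (Theorem~\ref{th: linear BM}). Granting that result, the remaining steps are elementary, the one slightly delicate point being the extraction of a common-projection direction from $\H^1(\partial K)=\H^1(\partial L)$, which the Cauchy formula together with an intermediate-value argument takes care of. If the equality case --- $K$ and $L$ homothetic --- were also wanted, it could be read off by tracking the Brunn--Minkowski equality conditions through the same chain.
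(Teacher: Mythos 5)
The paper does not prove this lemma at all: it is quoted verbatim from \cite[Theorem~1.1]{FGM}, so there is no internal proof to compare against. Your argument is a correct, self-contained derivation, and it is worth recording how it works. The two structural identities you use --- additivity of perimeter under Minkowski sums in the plane, $\H^1(\partial(K+L))=\H^1(\partial K)+\H^1(\partial L)$, and the expansion $\H^2(K+L)=\H^2(K)+2\V(K,L)+\H^2(L)$ --- are standard, and your algebra correctly reduces the claim to $2\V(K,L)\,\H^1(\partial K)\,\H^1(\partial L)\ge \H^2(K)\H^1(\partial L)^2+\H^2(L)\H^1(\partial K)^2$; the homogeneity check under $L\mapsto tL$ (both sides scale like $t^2$) is right, so the reduction to equal perimeters and hence to $2\V(K,L)\ge\H^2(K)+\H^2(L)$ is legitimate. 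The extraction of a direction $u_0$ with $\H^1(P_{u_0^\perp}(K))=\H^1(P_{u_0^\perp}(L))$ from Cauchy's formula and the intermediate value theorem on the connected set $\mathbb{S}^1$ is sound, and feeding this into Theorem~\ref{th: linear BM} with $\lambda=\tfrac12$ does give exactly $2\V(K,L)\ge\H^2(K)+\H^2(L)$ after expanding $\H^2(\tfrac12K+\tfrac12L)=\tfrac14\H^2(K+L)$. Your closing remark is also apt: Minkowski's first inequality alone cannot yield the result, and the perimeter normalisation must enter through a linear (Bonnesen-type) refinement --- which is essentially the mechanism behind the original proof in \cite{FGM} as well. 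What your route buys in the context of this paper is that the lemma becomes a corollary of Theorem~\ref{th: linear BM}, which is already stated and used elsewhere in the text, so no external input beyond Cauchy's formula is needed.
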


Note that this result does not hold in higher dimensions with the respective ratio of $\H^n$ and $\H^{n-1}$, see also \cite[Theorem~1.1]{FGM}.

The following result from \cite{GHP}, which will be used later on, does implicitly (see \cite{FGM}) a counterexample to Question \ref{Q1}.

\begin{theorem}[{\cite[Theorem 4.3]{GHP}}]\label{th: GHP}
Let $n \ge 2$, let $K \in \K^n_n$ be an $n$-dimensional convex body, and let $u \in \sfe$. 
Then
\begin{equation}\label{e:GHP}
\frac{\H^n(K)}{\H^{n-1}(\partial K)} \le \frac{2(n-1)}{n} \frac{\H^{n-1}(P_{u^\perp} (K))}{\H^{n-2}(\partial P_{u^\perp} (K))}.
\end{equation}
\end{theorem}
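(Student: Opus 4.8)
The plan is to reduce \eqref{e:GHP} to a one--dimensional inequality for a concave profile over the projection, and then to an isoperimetric comparison for a nested family of convex bodies. Fix coordinates with $u=e_n$, so that $C=P_{u^\perp}(K)\subset\R^{n-1}$, and put $A:=\H^{n-1}(C)$, $p:=\H^{n-2}(\partial C)$. By a routine approximation I may assume $K$ smooth and strictly convex, both sides of \eqref{e:GHP} being continuous in the Hausdorff metric on $\K^n_n$. I then apply Steiner symmetrization in the direction $u$: it preserves $\H^n(K)$ and the projection $C$ (hence $A$ and $p$), while it does not increase $\H^{n-1}(\partial K)$. Since decreasing the surface area only enlarges the left--hand side of \eqref{e:GHP}, it suffices to prove the inequality after symmetrization. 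Thus I may take
\[
K=\{(x,t): x\in C,\ |t|\le\psi(x)\},
\]
where $\psi\ge 0$ is concave on $C$ (it is half the fibre length, a difference of a concave and a convex function) and $\psi=0$ on $\partial C$.

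For such a body $\H^n(K)=2\int_C\psi\,dx$ and $\H^{n-1}(\partial K)=2\int_C\sqrt{1+|\nabla\psi|^2}\,dx$, so \eqref{e:GHP} becomes
\begin{equation*}
\frac{\int_C\psi\,dx}{\int_C\sqrt{1+|\nabla\psi|^2}\,dx}\le\frac{2(n-1)}{n}\cdot\frac{A}{p}. \tag{$\dagger$}
\end{equation*}
I bound the denominator below by $\int_C|\nabla\psi|\,dx$ via $\sqrt{1+r^2}\ge r$, and I introduce the superlevel sets $C_s:=\{x\in C:\psi(x)\ge s\}$, which are convex and nested with $C_0=C$ and collapse to a point as $s\uparrow M:=\max\psi$. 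The coarea formula gives $\int_C\psi\,dx=\int_0^M\H^{n-1}(C_s)\,ds$ and $\int_C|\nabla\psi|\,dx=\int_0^M\H^{n-2}(\partial C_s)\,ds$, so $(\dagger)$ follows from the isoperimetric comparison
\begin{equation*}
n\,p\int_0^M\H^{n-1}(C_s)\,ds\le 2(n-1)\,A\int_0^M\H^{n-2}(\partial C_s)\,ds. \tag{$\ddagger$}
\end{equation*}

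Inequality $(\ddagger)$ is the core of the argument. The structural input is that, $\psi$ being concave, the family $\{C_s\}$ is Minkowski--concave, $(1-\lambda)C_{s_0}+\lambda C_{s_1}\subseteq C_{(1-\lambda)s_0+\lambda s_1}$; hence $s\mapsto\H^{n-1}(C_s)^{1/(n-1)}$ is concave (Brunn--Minkowski in $\R^{n-1}$), while the perimeters $\H^{n-2}(\partial C_s)$ are monotone in $s$ and are governed by the cone--volume identity $(n-1)\H^{n-1}(C_s)=\int_{\partial C_s}\langle x-x_0,\nu\rangle\,d\H^{n-2}$ about a common apex $x_0$ with $\psi(x_0)=M$. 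The extremal configuration is the cone over $C$ (i.e. $\psi$ affine in the gauge of $C$), for which $(\ddagger)$ holds with equality up to the factor $2$; this pins down the constant $\tfrac{2(n-1)}{n}$. The main obstacle is precisely this constant. A pointwise--in--$s$ estimate $\H^{n-1}(C_s)/\H^{n-2}(\partial C_s)\le\tfrac{2(n-1)}{n}\,A/p$ would close the argument but may fail, since a single level set can be a Cheeger set of $C$ and then its ratio reaches $\Cc(C)\ge A/p$. The resolution is that a nested, Minkowski--concave family cannot attain the Cheeger ratio on more than a negligible range of levels; turning this into the averaged bound $(\ddagger)$, using the concavity of $s\mapsto\H^{n-1}(C_s)^{1/(n-1)}$ together with the monotonicity of $\H^{n-2}(\partial C_s)$, is the technical heart of the proof.

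As a complementary route and a consistency check, the divergence theorem applied to $K$ with the horizontal field $(x,t)\mapsto(x-x_0,0)$ yields $(n-1)\H^n(K)=\int_{\partial K}\langle x-x_0,\nu'\rangle\,d\H^{n-1}$, where $\nu'$ is the component of the outer normal in $u^\perp$; estimating $\langle x-x_0,\nu'\rangle\le|\nu'|\,h_{C-x_0}(\omega)$ with $\omega=\nu'/|\nu'|$ and invoking Minkowski's relation $\int_{\sfe\cap u^\perp}h_{C-x_0}\,dS_{n-2}(C,\cdot)=(n-1)A$ reduces the matter to comparing the shadow measure $|\nu'|\,d\H^{n-1}$ on $\partial K$ with the surface--area measure of $C$, which is exactly where the factor $2$ enters (already for the ball one has $\H^n/\H^{n-1}(\partial\cdot)=1/n$ against the bound $2/n$). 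Finally, the case $n=2$ is classical: $C$ is a segment, $A/p$ equals half the width of $K$ in the direction $u$, and $(\dagger)$ reduces to the elementary estimate $\mathrm{area}/\mathrm{perimeter}\le\mathrm{width}/2$, a sanity check confirming $\tfrac{2(n-1)}{n}\big|_{n=2}=1$.
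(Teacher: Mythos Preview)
The paper does not give its own proof of this statement; Theorem~\ref{th: GHP} is quoted verbatim from \cite[Theorem~4.3]{GHP} and used as a black box. There is therefore no in-paper argument to compare against.

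Evaluating your attempt on its own merits: the reduction is sound up to $(\ddagger)$. Steiner symmetrization in the direction $u$ preserves $\H^n(K)$ and $P_{u^\perp}(K)$ while not increasing $\H^{n-1}(\partial K)$, so it is legitimate to pass to the symmetric body; the surface-area formula for the graph and the coarea reduction are correct. The gap is that you never prove $(\ddagger)$. You list structural facts about the level sets (Minkowski concavity of $s\mapsto C_s$, concavity of $s\mapsto\H^{n-1}(C_s)^{1/(n-1)}$, monotonicity of $\H^{n-2}(\partial C_s)$) and then write that ``turning this into the averaged bound $(\ddagger)$ \ldots\ is the technical heart of the proof.'' That sentence describes the task; it does not accomplish it. None of the ingredients you name, singly or in the combinations you suggest, obviously produces the specific constant $\tfrac{2(n-1)}{n}$, and you give no computation linking them to it. The heuristic that a Minkowski-concave nested family ``cannot attain the Cheeger ratio on more than a negligible range of levels'' is not turned into an inequality; even granting it, it is unclear why the resulting integral bound would carry exactly the factor $\tfrac{2(n-1)}{n}$ rather than, say, $1$ or $(n-1)$.

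The ``complementary route'' has the same defect. The divergence identity $(n-1)\H^n(K)=\int_{\partial K}\langle x-x_0,\nu'\rangle\,d\H^{n-1}$ is correct, but the step ``comparing the shadow measure $|\nu'|\,d\H^{n-1}$ on $\partial K$ with the surface-area measure of $C$, which is exactly where the factor $2$ enters'' is again an announcement, not an argument. In short, both routes stop precisely at the point where the actual inequality with the correct constant must be established.
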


\begin{remark}
In \cite{FGM}, the authors prove that inequality \eqref{e:GHP} in Theorem \ref{th: GHP} is indeed sharp. This implies directly, if $n\geq 3$, the existence of some $\epsilon >0$, of a convex body $K_\epsilon$, and of $u\in\sfe$ for which
\[
\frac{\H^n(K_\epsilon)}{\H^{n-1}(\partial K_\epsilon)} \ge (1-\epsilon)\frac{2(n-1)}{n} \frac{\H^{n-1}(P_{u^\perp} (K_\epsilon))}{\H^{n-2}(\partial P_{u^\perp} (K_\epsilon))}> \frac{\H^{n-1}(P_{u^\perp} (K_\epsilon))}{\H^{n-2}(\partial P_{u^\perp} (K_\epsilon))}.
\]
The latter is the reverse of \eqref{ineq Q2}.
\end{remark}

Next, we provide a result that will be used throughout the rest of the paper. We need the following notation. For a convex body $K\in\K^n$, a unit vector $u\in\sfe$ and $h\in \R$, we denote the section of $K$ orthogonal to $u$ at height $h$ by $[K]_h$, i.e., $[K]_h:=K\cap (h\,u+u^\perp)$.

\begin{lemma}\label{Lemma integrals}
Let $n \ge 2$, and let $u\in \sfe$. Let further $K\in \K^n_n$ be an $n$-dimensional convex body, and let $C=P_{u^{\perp}} (K)$. Then, there exists a family $(K_{\lambda})_{\lambda \ge 1}\subset \K_C$ of convex bodies in the canal class $\K_C$, such that
\begin{equation}\label{eq-lemma3.1}
\frac{\H^n(K)}{\H^{n-1}(\partial K)} < \lim_{\lambda \rightarrow \infty}\frac{\H^n(K_{\lambda})}{\H^{n-1}(\partial K_{\lambda})}=\frac{\int_{-\infty}^{+\infty}\H^{n-1}([K]_h)\dlat h}{\int_{-\infty}^{+\infty}\H^{n-2}(\partial [K]_h)\dlat h}.
\end{equation}
\end{lemma}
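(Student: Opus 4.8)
The plan is to build the family $(K_\lambda)_{\lambda\ge 1}$ by stretching $K$ in the $u$-direction while keeping the projection onto $u^\perp$ fixed. Concretely, write $K$ so that $u=e_n$ and set $K_\lambda := \{(x,\lambda t) : (x,t)\in K\}$, i.e., the image of $K$ under the linear map that multiplies the last coordinate by $\lambda$. Since the projection onto $u^\perp$ is unchanged, each $K_\lambda$ lies in $\K_C$, and for $\lambda\ge 1$ these are genuine $n$-dimensional convex bodies. The section at height $h$ satisfies $[K_\lambda]_h = [K]_{h/\lambda}$ (translated into the appropriate hyperplane), so by Fubini
\begin{equation}\label{eq:vol-stretch}
\H^n(K_\lambda) = \int_{-\infty}^{+\infty}\H^{n-1}([K_\lambda]_h)\,\dlat h = \lambda\int_{-\infty}^{+\infty}\H^{n-1}([K]_s)\,\dlat s = \lambda\,\H^n(K).
\end{equation}

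Next I would compute the asymptotics of the surface area $\H^{n-1}(\partial K_\lambda)$ as $\lambda\to\infty$. Decompose $\partial K_\lambda$ into the ``top'' and ``bottom'' faces — the portions of $\partial K$ where the outer normal has a nonzero $u$-component, which project bijectively onto $C$ from above and below — and the ``lateral'' part, where the outer normal is (almost) orthogonal to $u$. The top and bottom faces together contribute a term that tends to $\H^{n-1}(C)$-order constants (independent of $\lambda$, up to the distortion of the graph map, which is bounded), hence is $O(1)$. The lateral part is where the growth lives: writing the boundary in terms of the support function of the sections $[K]_s$ and using the area formula / coarea formula for the stretched body, the lateral surface area is
\begin{equation*}
\H^{n-1}(\partial K_\lambda)_{\mathrm{lat}} = \int_{-\infty}^{+\infty}\H^{n-2}(\partial[K_\lambda]_h)\sqrt{1+O(\lambda^{-2})}\;\dlat h = \lambda\int_{-\infty}^{+\infty}\H^{n-2}(\partial[K]_s)\,\dlat s\,\bigl(1+o(1)\bigr),
\end{equation*}
because under the stretch the slant of the lateral boundary becomes asymptotically vertical: the metric distortion factor of the lateral boundary, which at scale $\lambda$ behaves like $\sqrt{1 + (\text{horizontal variation})^2/\lambda^2}$, tends to $1$ uniformly on compact height-ranges, and the integrand is supported on the bounded interval of heights attained by $K$. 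Combining with the $O(1)$ contribution of the caps gives
\begin{equation*}
\frac{\H^n(K_\lambda)}{\H^{n-1}(\partial K_\lambda)} = \frac{\lambda\,\H^n(K)}{\lambda\int \H^{n-2}(\partial[K]_s)\,\dlat s\,(1+o(1)) + O(1)} \longrightarrow \frac{\int_{-\infty}^{+\infty}\H^{n-1}([K]_s)\,\dlat s}{\int_{-\infty}^{+\infty}\H^{n-2}(\partial[K]_s)\,\dlat s},
\end{equation*}
using \eqref{eq:vol-stretch} in the numerator. This is the claimed limit.

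For the strict inequality on the left of \eqref{eq-lemma3.1}, I would observe that $\H^n(K) = \int\H^{n-1}([K]_s)\,\dlat s$ while $\H^{n-1}(\partial K) > \int\H^{n-2}(\partial[K]_s)\,\dlat s$: indeed the lateral part of $\partial K$ is the graph of a genuinely slanted function over $\bigcup_h \partial[K]_h$ (it is not vertical, since $K$ is bounded in the $u$-direction, so the slant is strictly positive on a set of positive measure), hence its area strictly exceeds the integral of the $(n-2)$-perimeters of the sections; and the top and bottom caps only add to $\H^{n-1}(\partial K)$. Dividing, $\frac{\H^n(K)}{\H^{n-1}(\partial K)} < \frac{\int\H^{n-1}([K]_s)\,\dlat s}{\int\H^{n-2}(\partial[K]_s)\,\dlat s}$, which is exactly the limit just computed. (Alternatively, since $K = K_1$ is itself a member of the family and $\lambda\mapsto \frac{\H^n(K_\lambda)}{\H^{n-1}(\partial K_\lambda)}$ is easily seen to be strictly increasing for $\lambda\ge 1$ from the formulas above, strictness is immediate.)

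The main obstacle I anticipate is making the surface-area asymptotics rigorous, specifically controlling the lateral boundary. One must be careful that $\partial[K]_h$ varies measurably (indeed Lipschitz-continuously in Hausdorff distance, away from the top and bottom extreme heights) and that the area of the lateral boundary of $K_\lambda$ genuinely factors as $\lambda$ times the height-integral of the section perimeters, up to a factor converging to $1$ — this is essentially a coarea/Fubini statement for the boundary manifold, but the boundary is only Lipschitz, so I would phrase it via the rectifiability of $\partial K$ and the area formula, or approximate $K$ from inside by smooth bodies and pass to the limit, checking that the endpoint caps contribute a bounded amount that washes out after division by $\lambda$. Everything else — the membership $K_\lambda\in\K_C$, the volume identity, the strictness — is routine.
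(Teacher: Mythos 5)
Your construction is the same as the paper's (stretch $K$ by the diagonal map $D_\lambda=\operatorname{diag}(1,\ldots,1,\lambda)$ along $u=e_n$), the volume identity $\H^n(K_\lambda)=\lambda\H^n(K)$ is correct, and the limit you aim for is the right one; but the surface-area accounting contains a genuine error. You decompose $\partial K_\lambda$ into the upper and lower graphs over $C$ (the portion ``where the outer normal has a nonzero $u$-component'') plus a lateral part, and you claim the graphs contribute $O(1)$. That is false whenever the upper or lower boundary of $K$ is not horizontal: the graph of $f$ over $C$ becomes the graph of $\lambda f$, whose area is $\int_C\sqrt{1+\lambda^2\|\nabla f\|^2}\sim\lambda\int_C\|\nabla f\|$, i.e.\ $\Theta(\lambda)$. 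For $K=B_n$ (or a simplex) your lateral part as defined is $\H^{n-1}$-null, so your decomposition would predict $\H^{n-1}(\partial K_\lambda)=O(1)$ and a divergent ratio, whereas in fact $\H^{n-1}(\partial D_\lambda(B_n))\sim\lambda\int\H^{n-2}(\partial[B_n]_h)\,\dlat h$. The linear growth of the slanted graph portions is precisely what produces the denominator $\int\H^{n-2}(\partial[K]_h)\,\dlat h$ (by coarea, $\int_C\|\nabla f\|=\int\H^{n-2}(\{f=t\})\,\dlat t$), so it cannot be discarded. Your displayed formula for the ``lateral'' area, which integrates the full section boundaries $\H^{n-2}(\partial[K_\lambda]_h)$, accidentally encodes the correct answer, but it contradicts your own definition of the lateral part and double counts the graph portions you have already set aside as caps.

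The correct bookkeeping, which the paper carries out after reducing to polytopes, is piece-by-piece rather than caps-versus-sides: a facet $F$ with outer normal $v\notin\lin(e_n)$ satisfies $\H^{n-1}(F)=\sin(\angle(v,e_n))^{-1}\int\H^{n-2}([F]_h)\,\dlat h$; under $D_\lambda$ the slice integral scales exactly by $\lambda$ while the new normal satisfies $\sin(\angle(v^{\lambda},e_n))\to 1$, so $\H^{n-1}(F^\lambda)/\lambda\to\int\H^{n-2}([F]_h)\,\dlat h$, and only the exactly horizontal facets remain bounded and wash out after dividing by $\lambda$. Note also that your correction factor $\sqrt{1+O(\lambda^{-2})}$ is not uniform for a general convex body near horizontal tangencies (there $\sin(\angle(\nu,e_n))^{-1}$ blows up), which is one reason the paper passes through polytope approximation. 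Your strictness argument is fine: $\H^n(K)=\int\H^{n-1}([K]_h)\,\dlat h$ together with $\H^{n-1}(\partial K)>\int\H^{n-2}(\partial[K]_h)\,\dlat h$ gives the left-hand inequality, and your alternative via monotonicity of the ratio in $\lambda$ is exactly the paper's route.
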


\begin{proof}
W.l.o.g., we may assume that $K$ is a polytope. The general claim will be obtained by approximation. Moreover, we may also assume that $u=e_n$. 

Let $F \in \K^n$ be an $(n-1)$-dimensional convex body with (not necessarily normalized) normal vector $v=(v_1,\ldots,v_n)\in\R^n$, such that $(v_1,\ldots,v_{n-1},0) \ne \underline{0}$, i.e., $\operatorname{aff}(F)$ is not parallel to $e_n^\perp$. Then,
\[
\int_{-\infty}^{+\infty} \H^{n-2}([F]_h)\dlat h=\int_{-\infty}^{+\infty} \H^{n-2}\left([P_{(v_1,\ldots,v_{n-1},0)^\perp}(F)]_h\right)\dlat h=\H^{n-1}\left(P_{(v_1,\ldots,v_{n-1},0)^\perp}(F)\right).
\]
The projection $P_{(v_1,\ldots,v_{n-1},0)^\perp}(F)$ satisfies 
\[
\H^{n-1}\left(P_{(v_1,\ldots,v_{n-1},0)^\perp}(F)\right)=\cos(\angle(v,(v_1,\ldots,v_{n-1},0))\, \H^{n-1}(F),
\]
since $v$ is the normal vector to $F$. This yields 
\[
\H^{n-1}\left(P_{(v_1,\ldots,v_{n-1},0)^\perp}(F)\right)=\sin(\angle(v,e_n))\, \H^{n-1}(F),
\]
because $(v_1,\ldots,v_{n-1},0) \perp e_n$. We obtain
\begin{equation}\label{eq-integral_facet}
\int_{-\infty}^{+\infty} \H^{n-2}([F]_h) \dlat h=\sin(\angle(v,e_n))\, \H^{n-1}(F).
\end{equation}

We use now the latter to express the left-hand quotient of \eqref{eq-lemma3.1} depending on the facets of $K$.  Let $\operatorname{lin}(\cdot)$ denote the linear hull (or span), and let $F_1,\ldots,F_k$ and $F_{b}$ and $F_t$ be the facets of $K$ with outer normal vectors $v^1,\ldots,v^k \notin \operatorname{lin}(e_n)$ and $-e_n$ and $e_n$, respectively. Observe that the bottom facet $F_b$ and the top facet $F_t$ might vanish. Then, by \eqref{eq-integral_facet},
\begin{eqnarray*}
    \frac{\H^n(K)}{\H^{n-1}(\partial K)} &=&
    \frac{\H^n(K)}{\H^{n-1}(F_b)+\H^{n-1}(F_t)+\sum_{i=1}^k \H^{n-1}(F_i)}\\
    &=& \frac{\int_{-\infty}^{+\infty} \H^{n-1}([K]_h) \dlat h}{\H^{n-1}(F_b)+\H^{n-1}(F_t)+\sum_{i=1}^k \frac{\int_{-\infty}^{+\infty}\H^{n-2}([F_i]_h)\dlat h}{\sin(\angle(v^i,e_n))}}.
\end{eqnarray*}

We use the diagonal operators $D_\lambda=\operatorname{diag}\underbrace{(1,\ldots,1,\lambda)}_{n-\text{times}}$, $\lambda \ge 1$, to define the convex bodies $K_\lambda=D_\lambda(K)$ by dilating $K$ in the direction $e_n$. Then, the facets of $K$ are transformed into the corresponding facets $F_1^\lambda,\ldots,F_k^\lambda,F_b^\lambda,F_t^\lambda$ with outer normals $v^{1,\lambda},\ldots,v^{k,\lambda},-e_n,e_n$. If $v^i=(v_1^i,\ldots,v_{n-1}^i,v_n^i)$, then
\begin{equation}\label{eq:new_normals}
v^{i,\lambda}=\left(v_1^i,\ldots,v_{n-1}^i,\frac{1}{\lambda}v_n^i\right).
\end{equation}
Observe that the latter yields
\begin{equation}\label{e: sinus}
\lim_{\lambda \to \infty} \sin(\angle(v^{i,\lambda},e_n)) = \sin\left(\frac{\pi}{2}\right)=1.
\end{equation}

Also note that
\begin{equation}\label{eq:normal}
\H^{n-1}(F_i^\lambda) < \lambda\, \H^{n-1}(F_i) \quad\text{ if } \lambda > 1,\; i \in \{1,\ldots,k\} \cup \{b,t\} \text{ and } v^i \not\perp e_n.
\end{equation}
In fact, this is clear if $v^i$ is a multiple of $e_n$ (i.e., $i \in \{b,t\}$), because then $F_i^\lambda$ is a translate of $F_i$. Otherwise we obtain
\[
0=\sin(0) < \sin(\angle(v^i,e_n)) < \sin(\angle(v^{i,\lambda},e_n)) < \sin\left(\frac{\pi}{2}\right)=1.
\]
Using \eqref{eq-integral_facet}, we get the following
\[
\begin{split}
\sin(\angle(v^{i,\lambda},e_n))\,\H^{n-1}(F_i^\lambda) &=
\int_{-\infty}^{+\infty} \H^{n-2}([F_i^\lambda]_h) \dlat h\\
&=\lambda\int_{-\infty}^{+\infty} \H^{n-2}([F_i]_h) \dlat h=\lambda\,\sin(\angle(v^i,e_n))\, \H^{n-1}(F_i)
\end{split},
\]
which yields \eqref{eq:normal}.

Next, we obtain
\begin{eqnarray}
    \frac{\H^n(K_\lambda)}{\H^{n-1}(\partial K_\lambda)} 
    &=& \frac{\int_{-\infty}^{+\infty} \H^{n-1}([K_\lambda]_h) \dlat h}{\H^{n-1}(F_b^\lambda)+\H^{n-1}(F_t^\lambda)+\sum_{i=1}^k \frac{\int_{-\infty}^{+\infty}\H^{n-2}([F^\lambda_i]_h)\dlat h}{\sin(\angle(v^{i,\lambda},e_n))}}\nonumber\\
    &=& \frac{\lambda\int_{-\infty}^{+\infty} \H^{n-1}([K]_h) \dlat h}{\H^{n-1}(F_b)+\H^{n-1}(F_t)+\sum_{i=1}^k \frac{\lambda\int_{-\infty}^{+\infty}\H^{n-2}([F_i]_h)\dlat h}{\sin(\angle(v^{i,\lambda},e_n))}}\nonumber\\
    &=& \frac{\int_{-\infty}^{+\infty} \H^{n-1}([K]_h) \dlat h}{\frac{1}{\lambda}\big(\H^{n-1}(F_b)+\H^{n-1}(F_t)\big)+\sum_{i=1}^k \frac{\int_{-\infty}^{+\infty}\H^{n-2}([F_i]_h)\dlat h}{\sin(\angle(v^{i,\lambda},e_n))}}.
    \label{eq-proof_Lem3.3}
\end{eqnarray}
Using now \eqref{e: sinus}, we get
\[
    \lim_{\lambda \to \infty}\frac{\H^n(K_\lambda)}{\H^{n-1}(\partial K_\lambda)} 
    = \frac{\int_{-\infty}^{+\infty} \H^{n-1}([K]_h) \dlat h}{\sum_{i=1}^k \int_{-\infty}^{+\infty}\H^{n-2}([F_i]_h)\dlat h}= \frac{\int_{-\infty}^{+\infty} \H^{n-1}([K]_h) \dlat h}{\int_{-\infty}^{+\infty}\H^{n-2}(\partial[K]_h)\dlat h},
\]
which is the right-hand equality in \eqref{eq-lemma3.1}.

Since $\sin(\angle(v^{i,\lambda},e_n))$ is strictly monotonically increasing in $\lambda$ by \eqref{eq:new_normals}, 
\eqref{eq-proof_Lem3.3} shows that the quotient $\frac{\H^n(K_\lambda)}{\H^{n-1}(\partial K_\lambda)}$ is strictly monotonically increasing, too. This yields the left-hand inequality in \eqref{eq-lemma3.1}, because $D_1$ is the identity, and thus, $K=K_1$.

Finally, note that the approximation of a general convex body $K$ by polytopes does not affect the strictness of the inequality in \eqref{eq-lemma3.1}. 
For the left-hand strict inequality of \eqref{eq-lemma3.1}, we prove that
\[
\frac{\H^n(K_{\lambda})}{\H^{n-1}(\partial K_\lambda)}> \frac{\H^n(K)}{\H^{n-1}(\partial K)}
\]
for every $\lambda > 1$ as follows. Note that a point $x \in \partial K$ has an outer normal vector perpendicular to $u$ (i.e., $x$ admits a tangent hyperplane which is perpendicular to $u^\perp$) if and only if $x$ belongs to the set $A=\{x \in \partial K:\ P_{u^\perp}(x) \in \partial P_{u^\perp}(K)\}$. We split $\partial K$ into the closed subset $A$ and its complement $B=\partial K \setminus A$, which is open relative to $\partial K$. By construction, $\H^{n-1}(B)\ge \H^{n-1}(P_{u^\perp}(B))=\H^{n-1}(P_{u^\perp}(K))>0$. 
Let $A_\lambda$ and $B_\lambda$ be the respective subsets of $\partial (K_\lambda)$.  Therefore, again by construction, $\H^{n-1}(A_\lambda)=\lambda\H^{n-1}(A)$ and (see \eqref{eq:normal}) $\H^{n-1}(B_\lambda)< \lambda \H^{n-1}(B)$.
Consequently,
\[
\frac{\H^{n}(K_\lambda)}{\H^{n-1}(\partial K_\lambda)}=
\frac{\lambda\H^{n}(K)}{\H^{n-1}(A_\lambda)+\H^{n-1}(B_\lambda)}>
\frac{\lambda\H^{n}(K)}{\lambda\H^{n-1}(A)+\lambda\H^{n-1}(B)}=
\frac{\H^{n}(K)}{\H^{n-1}(\partial K)},
\]
which is the sought claim.
\end{proof}

The following immediate consequence of Lemma \ref{Lemma integrals} will be used later in the paper.
\begin{remark}\label{Remark after Lemma}
Lemma~\ref{Lemma integrals} ensures that
\[
\sup\left\{ \frac{\H^n(K)}{\H^{n-1}(\partial K)}:\ K\in \K_C \right\}=\sup\left\{ \frac{\int_{-\infty}^{+\infty}\H^{n-1}([K]_h)\dlat h}{\int_{-\infty}^{+\infty}\H^{n-2}(\partial [K]_h)\dlat h}:\ K\in \K_C \right\}.
\]
\end{remark}

\section{On the negative answer to Question~\ref{Q1}}\label{s:3}
This section is devoted to an explicit negative answer to Question~\ref{Q1}.

We start providing an explicit counterexample to inequality \eqref{Question 1} in the $n$-dimensional case, $n \ge 3$, based on a convex body introduced in \cite[Section 4]{AAFO}. Given a nonempty set $A\subset \R^n$,  $\cl(A)$ stands for the closure of $A$ and $\conv(A)$ for its convex hull.

\begin{proposition}[Counterexample to inequality \eqref{Question 1}]
For every $n \ge 3$, there exists an $(n-1)$-dimensional convex body $C\subset \R^n$, and a convex body $A\in\K_C$, such that the answer to Question~\ref{Q1} is negative, i.e.,
\[
\frac{\H^n(A)}{\H^{n-1}(\partial A)}>\lim_{h\rightarrow \infty}\frac{\H^n(C_h)}{\H^{n-1}(\partial C_h)}.
\]
\end{proposition}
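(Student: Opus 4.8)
The plan is to construct, for each $n\geq 3$, an explicit convex body whose volume-to-surface ratio beats the cylindrical limit, using the "double cone over a thin slab" construction from \cite[Section 4]{AAFO}. Concretely, I would take $C$ to be the projection onto $u^\perp=e_n^\perp$ of a bipyramid (double cone) $A$ whose base is a flat $(n-1)$-dimensional box that is long in one direction and thin (of width $\varepsilon$) in the remaining $n-2$ directions, with apices placed far out along $u=e_n$. The key point is that such a body is very "needle-like": its volume grows like the product (long length)$\times\varepsilon^{n-2}\times$(apex height), while its surface area is dominated by the two large "roof" facets meeting the long edges, which are only mildly larger than the corresponding cross-sectional integral. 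By contrast, the right-hand side $\frac{\H^{n-1}(C)}{\H^{n-2}(\partial C)}$ of \eqref{Question 1} is controlled by $C$ alone, and for a box of side lengths $\ell$ and $\varepsilon$ (the latter repeated $n-2$ times) the inradius-type quantity $\frac{\H^{n-1}(C)}{\H^{n-2}(\partial C)}$ is of order $\varepsilon$, essentially the half-width of the thin directions.

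The main steps, in order, would be: (1) fix the explicit body — for definiteness, in coordinates $(x_1,\dots,x_n)$, let $C=\{|x_1|\le 1\}\cap\{|x_i|\le\varepsilon,\ i=2,\dots,n-1\}\cap\{x_n=0\}$ and let $A=\conv\bigl(C\cup\{\pm t e_n\}\bigr)$ for suitable $t,\varepsilon>0$; (2) use Lemma~\ref{Lemma integrals} and Remark~\ref{Remark after Lemma}, which reduce the task to comparing the "fractional" ratio $\frac{\int \H^{n-1}([A]_h)\,\dlat h}{\int \H^{n-2}(\partial[A]_h)\,\dlat h}$ against $\frac{\H^{n-1}(C)}{\H^{n-2}(\partial C)}$, since the left-hand side of \eqref{eq-lemma3.1} is strictly dominated by its limit, so it suffices to exhibit any $A\in\K_C$ whose cross-sectional ratio already exceeds the cylindrical limit; (3) compute $\int_{-t}^{t}\H^{n-1}([A]_h)\,\dlat h=\H^n(A)$ exactly (a bipyramid volume, $\frac{2t}{n}\H^{n-1}(C)$) and estimate $\int_{-t}^{t}\H^{n-2}(\partial[A]_h)\,\dlat h$ from above — each section $[A]_h$ is a scaled copy $\frac{t-|h|}{t}C$, so this integral equals $\frac{2}{n-1}\,t\,\H^{n-2}(\partial C)$ after integrating the linear scaling factor; (4) plug in $\H^{n-1}(C)\sim 2\cdot(2\varepsilon)^{n-2}$ and $\H^{n-2}(\partial C)\sim 2(2\varepsilon)^{n-2}+2(n-2)(2\varepsilon)^{n-3}\cdot 2\sim 4(n-2)(2\varepsilon)^{n-3}$ for small $\varepsilon$, obtaining that the cross-sectional ratio of $A$ is $\sim\frac{n-1}{n}\cdot\frac{\H^{n-1}(C)}{\H^{n-2}(\partial C)}$ times a bounded factor — wait, this must be handled carefully: the ratio of the two integrals is $\frac{n-1}{n}\cdot\frac{\H^{n-1}(C)}{\H^{n-2}(\partial C)}$, which is smaller, not larger, than the target. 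This shows the naive symmetric bipyramid does not work and the construction must be genuinely asymmetric or use a different section profile.

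Because of this, the refined plan is to follow \cite[Proposition 4.1]{AAFO} more literally: instead of a bipyramid over $C$, take $A$ to be a cone (single apex) over $C$ that is then "capped" so that the sections $[A]_h$ are \emph{not} shrinking copies of $C$ but rather bodies whose perimeter shrinks faster than their area — for instance sections that become round (balls) rather than boxes as $h$ varies, exploiting that a ball has the smallest perimeter-to-area ratio. The crucial mechanism is: near the apex the sections are small copies of $C$, contributing little, but in the "bulk" one arranges the sections to be nearly the full box $C$ while keeping the lateral facets nearly vertical, so that $\int\H^{n-2}(\partial[A]_h)\,\dlat h$ picks up essentially only the two big roof facets along the $x_1$-direction and loses the contribution of the $2(n-2)$ thin side facets (whose normals become almost perpendicular to $e_n$ after the dilation in Lemma~\ref{Lemma integrals}, so by \eqref{eq-integral_facet} their $\H^{n-2}$-integrals are multiplied by $\sin(\angle(v^i,e_n))\to 0$... no — by \eqref{e: sinus} that sine tends to $1$).

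The honest obstacle, and the heart of the argument, is exactly this bookkeeping: one must show that for the chosen $A$ the quantity $\int\H^{n-2}(\partial[A]_h)\,\dlat h$ is \emph{strictly smaller} than $\H^{n-2}(\partial C)$ times the "effective height" while $\int\H^{n-1}([A]_h)\,\dlat h=\H^n(A)$ is comparable to $\H^{n-1}(C)$ times that same effective height — i.e. that, against the cylinder, the body $A$ saves on perimeter by more than it loses on height-normalization. The cleanest route is to quote directly that the body of \cite[Proposition 4.1]{AAFO} satisfies $\frac{\H^n(A)}{\H^{n-1}(\partial A)}>\frac{\H^{n-1}(C)}{\H^{n-2}(\partial C)}$ with $C=P_{e_n^\perp}(A)$ — the construction there yields a $K$ with a large ratio of $\H^n$ to $\H^{n-1}(\partial K)$ relative to its projection, and one only needs to observe that $P_{e_n^\perp}(A)$ is full-dimensional in $e_n^\perp$ and that $A\in\K^n_n$, so $A\in\K_C$, and then invoke \eqref{e: connection to Cheeger} to identify the right-hand side of the displayed inequality with $\frac{\H^{n-1}(C)}{\H^{n-2}(\partial C)}$. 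Thus the proof reduces to: recall the \cite{AAFO} body, verify its projection is $(n-1)$-dimensional, check the numerical inequality they establish (which is the content we are allowed to cite), and conclude. I expect the main obstacle to be not any single estimate but the need to transcribe the \cite{AAFO} construction in a self-contained way and to re-derive (or carefully cite) the strict inequality for the specific ratio $\H^n/\H^{n-1}(\partial\,\cdot\,)$ rather than the superficially similar but distinct quantity treated there.
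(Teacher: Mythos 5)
Your proposal does not reach a proof. The only candidate you actually compute — the symmetric bipyramid over a thin box — you correctly show \emph{fails} (its cross-sectional ratio is $\frac{n-1}{n}\cdot\frac{\H^{n-1}(C)}{\H^{n-2}(\partial C)}$, on the wrong side of the target). The replacement sketch (a ``capped cone'' whose sections become round) is never made concrete, and it contains an unresolved internal contradiction about whether the relevant sine factors tend to $0$ or to $1$. Your final fallback, namely quoting \cite[Proposition 4.1]{AAFO} for the inequality itself, is not available: as you yourself concede in the last sentence, that paper establishes a different inequality for a different functional, and only the \emph{shape} of the body is borrowed here, not the estimate. So the decisive quantitative step — exhibiting a body in $\K_C$ and verifying that its ratio strictly exceeds the cylindrical limit — is missing.

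The construction the paper uses is of a different type from anything in your proposal: it is not a cone or bipyramid over $C$, but a tall cylinder with a thin corner simplex sliced off. Take $C=\sum_{i=1}^{n-1}[\underline{0},e_i]$ (the unit $(n-1)$-cube), for which the cylindrical limit is $\frac{1}{2(n-1)}$, and set $A^h=\cl\bigl((C+h[\underline{0},e_n])\setminus S\bigr)$ with $S=\conv\{\underline{0},\tfrac{e_1}{3},\ldots,\tfrac{e_{n-1}}{3},he_n\}$; the projection onto $e_n^\perp$ is still $C$. The volume lost is $\frac{h}{3^{n-1}n!}$, i.e.\ linear in $h$ with a very small constant. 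On the boundary, the cut deletes $n-1$ lateral facets of $S$ of total area $\frac{(n-1)h}{3^{n-2}(n-1)!}$ (plus a bottom facet of area $O(1)$) and creates one slanted facet of area $\frac{\sqrt{n-1}}{3^{n-2}(n-1)!}h(1+o(1))$. The whole point is that $\sqrt{n-1}<n-1$ for $n\ge 3$, so the surface area decreases by a positive multiple of $h$ whose coefficient dominates the volume-loss coefficient; for $h$ large this gives $\H^{n-1}(\partial A^h)<2(n-1)\H^n(A^h)$, i.e.\ $\frac{\H^n(A^h)}{\H^{n-1}(\partial A^h)}>\frac{1}{2(n-1)}$. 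That trade-off (many flat facets replaced by one slanted facet, saving surface area at negligible volume cost) is the idea your proposal lacks; without it, or some equally explicit substitute, the argument cannot be completed.
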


\begin{proof}
We consider $u=e_n$ and the $(n-1)$-dimensional unit cube $C=\sum_{i=1}^{n-1}[\underline{0},e_i]$. 

The right-hand side of \eqref{Question 1} amounts to
\[
\lim_{h\rightarrow \infty}\frac{\H^n(C_h)}{\H^{n-1}(\partial C_h)}=
\lim_{h\rightarrow \infty}\frac{\H^n(C+h[\underline{0},e_n])}{\H^{n-1}(\partial (C+h[\underline{0},e_n]))}=\lim_{h\rightarrow \infty}
\frac{h}{2(n-1)h+2}=
\frac{1}{2(n-1)}.
\]

Let $h >0$ and let $S$ be the $n$-dimensional simplex $S=\conv\left\{ \underline{0}, \frac{ e_1}{3},\ldots, \frac{e_{n-1}}{3},he_n \right\}$ with vertices $\underline{0},\frac{e_1}{3},\ldots, \frac{e_{n-1}}{3},he_n$, and corresponding opposite facets $F_0,\ldots,F_n$. We consider 

$$
A^h:=\cl((C+h[\underline{0},e_n])\setminus S).
$$

We observe that, by construction, $P_{e_n^\perp}(A^h)=C$, and thus, $A^h\in \K_C$. 

We aim to show that
\begin{equation}\label{ineq counterexample}
\frac{\H^n(A^h)}{\H^{n-1}(\partial A^h)}>\frac{1}{2(n-1)}
\end{equation}
for some suitable choice of $h=h(n)$ (which will depend on the dimension $n$).

For the volume, we have $\H^n(A^h)=\H^n(C_h)-\H^n(S)=h-\frac{h}{{3}^{n-1}n!}$. To calculate the surface area $\H^{n-1}(\partial A^h)$, we first consider the facets $F_0,\ldots, F_n$ of $S$. We observe that $F_n$ is an $(n-1)$-dimensional simplex with vertices at the origin $\underline{0}$, and at every $e_i/3$ for $1\leq i\leq n-1$. Thus, $\H^{n-1}(F_n)=\frac{1}{3^{n-1}(n-1)!}$. On the other hand, for $1\leq i\leq n-1$, the $(n-1)$-dimensional simplex $F_i$ has vertices at the origin $\underline{0}$, at every $e_j/3$, $1\leq j\le n-1$ for $j \ne i$, and at $he_n$. Thus, $\H^{n-1}(F_i)=\frac{h}{3^{n-2}(n-1)!}$. Finally, $F_0=\conv\left\{\frac{e_1}{3},\ldots,\frac{e_{n-1}}{3},he_n\right\}$ is an $(n-1)$-dimensional simplex with the $(n-2)$-dimensional bottom face $\conv\left\{\frac{e_1}{3},\ldots,\frac{e_{n-1}}{3}\right\}$ and whose height is given 
by the segment $\big[h e_n,\frac{1}{n-1}\left(\frac{e_1}{3}+\ldots+\frac{e_{n-1}}{3}\right)\big]$. Hence,
\[
\begin{split}
\H^{n-1}(F_0)&=\frac{1}{n-1}\H^{n-2}\left(\conv\left\{\frac{e_1}{3},\ldots,\frac{e_{n-1}}{3}\right\}\right)
\H^1\left(\left[he_n, \frac{1}{n-1}\left(\frac{e_1}{3}+\ldots+\frac{e_{n-1}}{3}\right)\right]\right)\\
&=\frac{1}{n-1} \frac{\sqrt{n-1}}{3^{n-2}(n-2)!}\left\|he_n-\frac{1}{n-1}\left(\frac{e_1}{3}+\ldots+\frac{e_{n-1}}{3}\right)\right\|
\\
&=\frac{1}{n-1} \frac{\sqrt{n-1}}{3^{n-2}(n-2)!} \sqrt{h^2+\frac{1}{9(n-1)}}\\&=\frac{\sqrt{n-1}}{3^{n-2}(n-1)!}h(1+\epsilon(h)),
\end{split}
\]
where $\lim_{h \to \infty} \epsilon(h)=0$.
Consequently,
\[
\begin{split}
\H^{n-1}(\partial A^h)&=\displaystyle\H^{n-1}(\partial C_h)-\sum_{i=1}^{n-1}\H^{n-1}(F_i)-\H^{n-1}(F_n)+\H^{n-1}(F_0)\\
&= \displaystyle (2(n-1)h+2)-(n-1)\frac{h}{3^{n-2}(n-1)!}-\frac{1}{3^{n-1}(n-1)!}+\frac{\sqrt{n-1}}{3^{n-2}(n-1)!}h(1+\epsilon(h))\\
&=\displaystyle 2(n-1)\left(h-\frac{h}{3^{n-1}n!}\right)\\
&\quad
+\frac{2(n-1)h
+2\cdot 3^{n-1}n!
-3n(n-1)h
-n
+3n\sqrt{n-1}h(1+\epsilon(h))}{3^{n-1}n!}
\\
&=\displaystyle 2(n-1)\H^n(A^h)+\frac{(2\cdot 3^{n-1}n!-n)-\left(3n^2-5n+2-3n\sqrt{n-1}(1+\epsilon(h))\right)h}{3^{n-1}n!}.
\end{split}
\]
Note that $\lim_{h \to \infty}\left(3n^2-5n+2-3n\sqrt{n-1}(1+\epsilon(h))\right)h=\infty$ for all $n\geq 3$. Hence, picking $h=h(n)$ large enough, we obtain $\H^{n-1}(\partial A^h)<2(n-1)\H^n(A^h)$, which yields \eqref{ineq counterexample}.
\end{proof}

In the following, we deal with the proof of Theorem \ref{th: bounded by Cheeger}. 
We begin proving the next theorem, which deals with the right-hand inequality of \eqref{ineq: bounded by Cheeger}.

\begin{theorem}\label{th: bounded by Cheeger1}
Let $u\in \sfe$ and $n\geq 2$. Let further $C\subset u^{\perp}$ be an $(n-1)$-dimensional convex body. Then,
\begin{equation}\label{ineq: bounded by Cheeger_} 
    \sup\left\{ \frac{\H^n(K)}{\H^{n-1}(\partial K)}:\ K\in \K_C \right\}\leq \frac{\H^{n-1}(\Cs(C))}{\H^{n-2}(\partial\Cs(C))}.
\end{equation}
\end{theorem}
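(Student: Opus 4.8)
The plan is to reduce to the slicing formulation of Lemma~\ref{Lemma integrals} (as recorded in Remark~\ref{Remark after Lemma}) and then apply, slice by slice, the definition of the Cheeger constant of $C$. Set
\[
c_0:=\frac{\H^{n-1}(\Cs(C))}{\H^{n-2}(\partial\Cs(C))};
\]
since $\dim C=n-1$ and $\Cs(C)$ is the Cheeger set of $C$, the number $c_0$ is exactly the Cheeger constant $\Cc(C)=\sup\bigl\{\tfrac{\H^{n-1}(D)}{\H^{n-2}(\partial D)}:\ D\subseteq C\text{ closed},\ \H^{n-2}(\partial D)>0\bigr\}$. By Remark~\ref{Remark after Lemma}, the left-hand side of \eqref{ineq: bounded by Cheeger_} equals $\sup\bigl\{\tfrac{\int_{-\infty}^{+\infty}\H^{n-1}([K]_h)\,\dlat h}{\int_{-\infty}^{+\infty}\H^{n-2}(\partial[K]_h)\,\dlat h}:\ K\in\K_C\bigr\}$, so it suffices to prove that each single quotient appearing here is $\le c_0$.

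Fix $K\in\K_C$. Since $K$ is full-dimensional, $I:=\{\langle x,u\rangle:\ x\in\inter K\}$ is a nonempty bounded open interval, say $I=(a,b)$, and for every $h\in I$ the slice $[K]_h$ has nonempty interior relative to $h\,u+u^\perp$, hence is an $(n-1)$-dimensional convex body. Its orthogonal projection $D_h:=P_{u^\perp}([K]_h)$ is then a closed $(n-1)$-dimensional convex body with $D_h\subseteq P_{u^\perp}(K)=C$, and, since $[K]_h$ is a translate of $D_h$, we have $\H^{n-1}([K]_h)=\H^{n-1}(D_h)$ and $\H^{n-2}(\partial[K]_h)=\H^{n-2}(\partial D_h)>0$. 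Thus $D_h$ is an admissible competitor in the supremum defining $\Cc(C)$, and we obtain
\[
\H^{n-1}([K]_h)=\H^{n-1}(D_h)\le c_0\,\H^{n-2}(\partial D_h)=c_0\,\H^{n-2}(\partial[K]_h)\qquad\text{for all }h\in I.
\]
For $h\notin\cl(I)$ the slice $[K]_h$ is empty, so the inequality holds there trivially; the two remaining heights $h\in\{a,b\}$ form a Lebesgue-null set, so they do not affect the integrals (and in fact the inequality holds there too, the left side being $0$ whenever the slice is lower-dimensional).

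Integrating the last display over $h\in\R$ gives $\int_{-\infty}^{+\infty}\H^{n-1}([K]_h)\,\dlat h\le c_0\int_{-\infty}^{+\infty}\H^{n-2}(\partial[K]_h)\,\dlat h$. The integral on the right is finite (by \eqref{eq-integral_facet}, or because $\H^{n-2}(\partial D_h)\le\H^{n-2}(\partial C)$ by monotonicity of surface area under inclusion of convex bodies and $I$ is bounded) and strictly positive, because $\H^{n-2}(\partial[K]_h)>0$ on the nondegenerate interval $I$. Dividing, and then taking the supremum over $K\in\K_C$, yields \eqref{ineq: bounded by Cheeger_}. The bulk of the work is already contained in Lemma~\ref{Lemma integrals}; the only point here that needs a little care — and the main, modest, obstacle — is the dimension bookkeeping for the slices, namely verifying that $[K]_h$ is genuinely $(n-1)$-dimensional for $h$ in the open fibre interval (so that $D_h$ qualifies as a competitor for $\Cc(C)$ and has positive $(n-2)$-dimensional boundary measure) while the exceptional heights are negligible.
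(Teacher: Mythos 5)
Your argument is correct and follows essentially the same route as the paper's proof: reduce via Lemma~\ref{Lemma integrals} (equivalently Remark~\ref{Remark after Lemma}) to the slicing quotient, observe that each slice $[K]_h$ is a translate of a closed convex subset of $C$ and hence a competitor for the Cheeger constant of $C$, and integrate the resulting pointwise bound. The extra care you take with the endpoint heights and with the finiteness and positivity of the denominator integral is sound but not a departure from the paper's argument.
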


\begin{proof}
By Lemma \ref{Lemma integrals}, it is enough to show that
\begin{equation}\label{eq-proof_ThmA_}
    \frac{\int_{-\infty}^{+\infty} \H^{n-1}([K]_h) \dlat h}{\int_{-\infty}^{+\infty}\H^{n-2}(\partial[K]_h)\dlat h} \le \frac{\H^{n-1}(\Cs(C))}{\H^{n-2}(\partial\Cs(C))}
\end{equation}
holds for every $K \in \K_C$.
In order to prove this, 
let $a=\inf\{h \in \R:\, [K]_h \ne \emptyset\}$ and $b=\sup\{h \in \R:\, [K]_h \ne \emptyset\}$. For $a < h < b$, $[K]_h$ is an $(n-1)$-dimensional convex body.
Since $[K]_h= P_{u^\perp}([K]_h)+ hu$ and 
$P_{u^\perp}([K]_h)\subset P_{u^\perp}(K)=C$, we have that $[K]_h$ is a translate of some convex body $L \subset C$.

Therefore,
\[
    \frac{\H^{n-1}([K]_h)}{\H^{n-2}(\partial[K]_h)} \le \sup\left\{\frac{\H^{n-1}(L)}{\H^{n-2}(\partial L)}:\, L \subset C,\, L \text{ closed},\, \H^{n-2}(\partial L) >0 \right\}= \frac{\H^{n-1}(\Cs(C))}{\H^{n-2}(\partial\Cs(C))}.
\]
Rearranging and integrating, we have
\[    \int_a^b \H^{n-1}([K]_h) \dlat h \le \frac{\H^{n-1}(\Cs(C))}{\H^{n-2}(\partial\Cs(C))} \int_a^b\H^{n-2}(\partial[K]_h)\dlat h,
\]
which is equivalent to \eqref{eq-proof_ThmA_}.
\end{proof}

By Theorem~\ref{th: bounded by Cheeger1} and Remark~\ref{Remark after Lemma}, we get also the following result.

\begin{proposition}\label{Cheeger sufficient} Let $u\in\sfe$, $n \ge 2$, and let $C \in \K^n$ be an $(n-1)$-dimensional convex body, such that $C\subset u^\perp$. If $C$ is a Cheeger set, then
    \[
     \frac{\int_{-\infty}^{+\infty}\H^{n-1}([K]_h)\dlat h}{\int_{-\infty}^{+\infty}\H^{n-2}(\partial [K]_h)\dlat h}\leq \frac{\H^{n-1}(C)}{\H^{n-2}(\partial C)}
    \] 
    for any $n$-dimensional convex body $K$ in the canal class of $C$, i.e., for any $K\in \K_C$.
\end{proposition}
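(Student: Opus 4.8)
The plan is to read Proposition~\ref{Cheeger sufficient} off Theorem~\ref{th: bounded by Cheeger1}, together with the elementary observation that a convex body which happens to be a Cheeger set is its own Cheeger set.

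First I would note that, by Proposition~\ref{Prop Cheeger convex body}, $C$ has a unique Cheeger set $\Cs(C)$, and that the hypothesis ``$C$ is a Cheeger set'' means precisely that $\frac{\H^{n-1}(C)}{\H^{n-2}(\partial C)}$ realizes the supremum $\sup\{\frac{\H^{n-1}(L)}{\H^{n-2}(\partial L)}:L\subset C,\ L\text{ closed},\ \H^{n-2}(\partial L)>0\}$, i.e., $\Cs(C)=C$. Substituting $\Cs(C)=C$ into the right-hand side of \eqref{ineq: bounded by Cheeger_} turns Theorem~\ref{th: bounded by Cheeger1} into
\[
\sup\left\{\frac{\H^n(K)}{\H^{n-1}(\partial K)}:\ K\in\K_C\right\}\le\frac{\H^{n-1}(C)}{\H^{n-2}(\partial C)}.
\]

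Next I would invoke Remark~\ref{Remark after Lemma}, which is the immediate consequence of Lemma~\ref{Lemma integrals} stating that the supremum over $\K_C$ of $\frac{\H^n(K)}{\H^{n-1}(\partial K)}$ equals the supremum over $\K_C$ of the quotient $\frac{\int_{-\infty}^{+\infty}\H^{n-1}([K]_h)\,\dlat h}{\int_{-\infty}^{+\infty}\H^{n-2}(\partial[K]_h)\,\dlat h}$. Combining the two facts, for an arbitrary fixed $K\in\K_C$ its integral quotient is at most the supremum of all such integral quotients, which equals the supremum of the boundary quotients, which is in turn at most $\frac{\H^{n-1}(C)}{\H^{n-2}(\partial C)}$; this is exactly the asserted inequality.

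I do not expect any genuine obstacle here: the whole weight of the statement is already carried by Theorem~\ref{th: bounded by Cheeger1} (hence ultimately by Lemma~\ref{Lemma integrals} together with the defining extremal property of $\Cs(C)$ among closed subsets of $C$), and what remains is only the harmless identification $\Cs(C)=C$. In fact, an even shorter route avoiding Remark~\ref{Remark after Lemma} is available: the intermediate inequality \eqref{eq-proof_ThmA_}, obtained inside the proof of Theorem~\ref{th: bounded by Cheeger1}, already gives the bound for each individual $K\in\K_C$, and specializing it to $\Cs(C)=C$ yields the proposition directly.
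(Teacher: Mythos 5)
Your argument is correct and coincides with the paper's own proof, which derives Proposition~\ref{Cheeger sufficient} precisely by combining Theorem~\ref{th: bounded by Cheeger1} with Remark~\ref{Remark after Lemma} and the identification $\Cs(C)=C$ when $C$ is a Cheeger set. Your observation that inequality \eqref{eq-proof_ThmA_} already yields the bound for each individual $K\in\K_C$ is also accurate, but it does not change the substance of the argument.
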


The following lemma will allow us to prove the last part of Theorem \ref{th: bounded by Cheeger}.

\begin{lemma}\label{lem-Pyramid}
For every integer $n \ge 2$ and every real $0<\epsilon<1$, there exists an $n$-dimensional convex body $C \in \K^n_n$, such that
\[
\frac{\H^n(\Cs(C))}{\H^{n-1}(\partial \Cs(C))} > (1-\epsilon)n \frac{\H^n(C)}{\H^{n-1}(\partial C)}.
\]
\end{lemma}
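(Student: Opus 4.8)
\emph{Proof idea.} The plan is to take $C$ to be a very flat and wide pyramid. Fix a large real $L$ and let $C=C_L\in\K^n_n$ be the pyramid with base $[-L,L]^{n-1}\times\{0\}$ in $e_n^{\perp}$ and apex $e_n$. The decisive feature is that, as $L\to\infty$, each of its $2(n-1)$ lateral facets becomes almost horizontal, so that their total $(n-1)$-measure grows at the same rate as the measure of the base; this abundant ``wasted'' surface drives $\dfrac{\H^n(C_L)}{\H^{n-1}(\partial C_L)}$ down by a factor approaching $n$ compared with what a box-shaped subset achieves. The first step is the elementary record
\[
\H^n(C_L)=\tfrac1n(2L)^{n-1},\qquad
\H^{n-1}(\partial C_L)=(2L)^{n-1}+2(2L)^{n-2}\sqrt{L^2+1},
\]
the second summand being the $2(n-1)$ lateral facets, each a pyramid over an $(n-2)$-cube of edge $2L$ with slant height $\sqrt{L^2+1}$; hence $\dfrac{\H^n(C_L)}{\H^{n-1}(\partial C_L)}=\dfrac{L}{n\bigl(L+\sqrt{L^2+1}\bigr)}\to\dfrac1{2n}$ as $L\to\infty$.

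Next I would test the Cheeger constant against the closed polytope $D_t=D_{t,L}:=C_L\cap\bigl([-tL,tL]^{n-1}\times\R\bigr)\subset C_L$, for a fixed $t\in(0,1)$. Cutting $C_L$ by this slab leaves the full cross-section below height $1-t$ and the tip of $C_L$ above it, so $D_t$ is the box $[-tL,tL]^{n-1}\times[0,1-t]$ topped by a copy of $C_L$ scaled by $t$. Adding up facet volumes (bottom square; $2(n-1)$ box-side faces; the $2(n-1)$ slanted cap faces) gives
\[
\H^n(D_t)=(2tL)^{n-1}\Bigl(1-\tfrac{(n-1)t}{n}\Bigr),
\]
\[
\H^{n-1}(\partial D_t)=(2tL)^{n-1}+2(n-1)(2tL)^{n-2}(1-t)+2t(2tL)^{n-2}\sqrt{L^2+1}.
\]
Dividing numerator and denominator by $(2tL)^{n-2}$ and letting $L\to\infty$ with $t$ fixed gives $\dfrac{\H^n(D_t)}{\H^{n-1}(\partial D_t)}\to\dfrac12\Bigl(1-\dfrac{(n-1)t}{n}\Bigr)$, and therefore
\[
\lim_{L\to\infty}\ \frac{\H^n(D_t)/\H^{n-1}(\partial D_t)}{\H^n(C_L)/\H^{n-1}(\partial C_L)}=n-(n-1)t,
\]
which tends to $n$ as $t\to0^{+}$.

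To finish, given $0<\epsilon<1$ I would first fix $t\in(0,1)$ so small that $n-(n-1)t>(1-\tfrac\epsilon2)n$, and then fix $L$ large enough that the quotient in the last display exceeds $(1-\tfrac\epsilon2)\bigl(n-(n-1)t\bigr)>(1-\epsilon)n$. Since $D_t$ is a closed subset of $C_L$ with $\H^{n-1}(\partial D_t)>0$, the definition of the Cheeger constant yields
\[
\frac{\H^n(\Cs(C_L))}{\H^{n-1}(\partial\Cs(C_L))}=h(C_L)\ \ge\ \frac{\H^n(D_t)}{\H^{n-1}(\partial D_t)}\ >\ (1-\epsilon)\,n\,\frac{\H^n(C_L)}{\H^{n-1}(\partial C_L)},
\]
so $C=C_L$ has the required property. (The apex height played no role, and $n\ge2$ is all that is used.)

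The one genuinely delicate point, which I expect to be the main obstacle, is the choice of body. The obvious first attempt — a thin, tall cone over a ball — fails: for such a cone $\dfrac{\H^n}{\H^{n-1}}$ is comparable to $\dfrac{\rho}{n}$ with inradius $\rho\approx\tfrac12$, while its Cheeger set is essentially a frustum and a frustum of inradius $\rho$ has volume-to-surface ratio only of order $\dfrac{\rho}{n-1}$, so one gains a mere factor $\approx\dfrac2{n-1}$. What makes a flat wide pyramid work is that it realises the equality case of $\H^n(K)\ge\dfrac{\rho(K)}{n}\,\H^{n-1}(\partial K)$ (decompose $K$ into the cones from the centre of an inball to its facets; equality holds precisely when $K$ is circumscribed about that inball), while still containing an almost box-shaped piece that recovers essentially all of its surface efficiency. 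Once $C_L$ and $D_t$ are in place, the rest is routine bookkeeping with pyramid volumes, and no property of $\Cs(\cdot)$ beyond its defining supremum is invoked.
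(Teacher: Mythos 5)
Your proof is correct and follows essentially the same strategy as the paper's: take a very flat, wide pyramid over a base in $e_n^\perp$ with apex $e_n$, and test the Cheeger constant against an almost-box-shaped subset obtained by intersecting with a narrow vertical slab. The only differences are cosmetic — you use a cube base with a proportional cut $tL$ and exact facet computations, while the paper uses a simplex base at scale $h^2$ with a cut at scale $h$ and crude two-sided estimates — and your bookkeeping checks out.
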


\begin{proof}
Let $0<\epsilon<1$. We consider the simplex $C=C(h)=\conv\{\underline{0},h^2e_1,h^2e_2,\ldots,h^2e_{n-1},e_n\}$, where $h$ will be a \textit{large} real. Then $\H^n(C)=\frac{h^{2(n-1)}}{n!}$. The surface area $\H^{n-1}(\partial C)$ of $C$ is larger than twice the surface area $\frac{h^{2(n-1)}}{(n-1)!}$ of its bottom facet, because the area of the top facet $\conv\{h^2e_1,h^2e_2,\ldots,h^2e_{n-1},e_n\}$ decreases under its orthogonal projection onto the bottom facet, as the projection is a non-expanding linear map.

Thus,
\begin{equation}\label{eq-PfPyramid1}
\frac{\H^n(C)}{\H^{n-1}(\partial C)} < \frac{\frac{h^{2(n-1)}}{n!}}{2\frac{h^{2(n-1)}}{(n-1)!}}=\frac{1}{2n}.
\end{equation}

Now we
consider the subset $D=C \cap ([0,h]^{n-1} \times \R)$. Note that, for $h=h(\epsilon,n)$ large enough, we have  $[0,h]^{n-1} \times [0,1-\frac{\epsilon}{2}] \subset D \subset [0,h]^{n-1} \times [0,1]$.
Indeed, observe that the facet $\conv\{h^2e_1,h^2e_2,\ldots,h^2e_{n-1},e_n \}\cap ([0,h]^{n-1} \times \R)$ will approach the top facet of the cylinder $[0,h]^{n-1}\times [0,1]$, when $h$ becomes large. Hence,
\begin{equation}\label{eq-PfPyramid2}
\frac{\H^n(D)}{\H^{n-1}(\partial D)} > \frac{\H^{n}([0,h]^{n-1} \times [0,1-\frac{\epsilon}{2}])}{\H^{n-1}(\partial ([0,h]^{n-1} \times [0,1]))}=\frac{h^{n-1}(1-\frac{\epsilon}{2})}{2h^{n-1}+2(n-1)h^{n-2}}>\frac{1-\epsilon}{2},
\end{equation}
provided $h$ is sufficiently large. Since $D \subset C$, by the definition of the Cheeger set $\Cs(C)$ we get $\frac{\H^n(\Cs(C))}{\H^{n-1}(\partial \Cs(C))} \ge \frac{\H^n(D)}{\H^{n-1}(\partial D)}$. Combining this with \eqref{eq-PfPyramid2} and \eqref{eq-PfPyramid1}, we have \[
\frac{\H^n(\Cs(C))}{\H^{n-1}(\partial \Cs(C))}>\frac{1-\epsilon}{2}>(2n)\frac{1-\epsilon}{2}\frac{\H^n(C)}{\H^{n-1}(\partial C)},
\]
which completes the proof.
\end{proof}

Now we obtain the final ingredient we need for the proof of Theorem \ref{th: bounded by Cheeger}.

\begin{proposition}
\label{prop-counter_Cheeger}
For every $n \ge 3$, there exists an $(n-1)$-dimensional convex body $C \in \K^n$ such that the second inequality in \eqref{ineq: bounded by Cheeger} is strict, i.e.,
\[
\sup\left\{ \frac{\H^n(K)}{\H^{n-1}(\partial K)}:\ K\in \K_C \right\}< \frac{\H^{n-1}(\Cs(C))}{\H^{n-2}(\partial\Cs(C))}.
\]
\end{proposition}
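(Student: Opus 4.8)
The plan is to combine the construction from Lemma~\ref{lem-Pyramid} with the general upper bound of Theorem~\ref{th: GHP}, exploiting that the latter only permits the canal-class supremum to exceed $\frac{\H^{n-1}(C)}{\H^{n-2}(\partial C)}$ by the factor $\frac{2(n-1)}{n}$, whereas the former produces a body whose Cheeger ratio exceeds that same quantity by a factor arbitrarily close to $n-1$, and $\frac{2(n-1)}{n}<n-1$ precisely when $n\ge 3$.

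First I would fix $u=e_n$ and identify $u^\perp$ with $\R^{n-1}$. Since $n\ge 3$, the quantity $1-\frac2n=\frac{n-2}{n}$ is positive; pick any $\epsilon\in\bigl(0,\frac{n-2}{n}\bigr)$. Apply Lemma~\ref{lem-Pyramid} in dimension $n-1\ge 2$ with this $\epsilon$ to obtain an $(n-1)$-dimensional convex body, which after an isometry we may take to lie in $u^\perp$, and call it $C$. All the Hausdorff measures in the lemma are computed relative to $\aff(C)=u^\perp$, so the ratios are intrinsic and transfer verbatim to this embedded copy; thus
\[
\frac{\H^{n-1}(\Cs(C))}{\H^{n-2}(\partial\Cs(C))} > (1-\epsilon)(n-1)\,\frac{\H^{n-1}(C)}{\H^{n-2}(\partial C)}.
\]

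Next I would bound the left-hand side of the desired strict inequality. For every $K\in\K_C$ we have $P_{u^\perp}(K)=C$, so Theorem~\ref{th: GHP} gives $\frac{\H^n(K)}{\H^{n-1}(\partial K)}\le \frac{2(n-1)}{n}\,\frac{\H^{n-1}(C)}{\H^{n-2}(\partial C)}$, a bound independent of $K$; taking the supremum over $\K_C$ preserves it. Since $(1-\epsilon)(n-1)>\frac2n(n-1)=\frac{2(n-1)}{n}$ by the choice of $\epsilon$, we obtain the chain
\[
\sup\left\{ \frac{\H^n(K)}{\H^{n-1}(\partial K)}:\ K\in \K_C \right\}\le \frac{2(n-1)}{n}\,\frac{\H^{n-1}(C)}{\H^{n-2}(\partial C)}< (1-\epsilon)(n-1)\,\frac{\H^{n-1}(C)}{\H^{n-2}(\partial C)}< \frac{\H^{n-1}(\Cs(C))}{\H^{n-2}(\partial\Cs(C))},
\]
which is exactly the claimed strict inequality.

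I expect no serious obstacle remains: the genuine difficulty has already been absorbed into Lemma~\ref{lem-Pyramid} (the pyramid truncated to a near-cylinder), and into Theorem~\ref{th: GHP} (quoted from \cite{GHP}). The only points demanding attention are bookkeeping ones: verifying that the lemma, stated for a full-dimensional body in its own ambient space, applies with all ratios read as intrinsic quantities in $\aff(C)$; and checking that the numerical gap $\frac{2(n-1)}{n}<n-1$ — hence the existence of an admissible $\epsilon$ — is exactly what forces the hypothesis $n\ge 3$ (for $n=2$ both factors equal $1$ and the argument degenerates, consistently with the known positive answer in the plane).
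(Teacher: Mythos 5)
Your proposal is correct and follows essentially the same route as the paper: apply Lemma~\ref{lem-Pyramid} in dimension $n-1$ and combine it with the bound from Theorem~\ref{th: GHP}, noting that the gap between the factors $(1-\epsilon)(n-1)$ and $\frac{2(n-1)}{n}$ is exactly what requires $n\ge 3$. Your condition $\epsilon<\frac{n-2}{n}$ is equivalent to the paper's requirement $\frac{2}{(1-\epsilon)n}<1$, so the two arguments coincide.
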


\begin{proof}\label{thm-GHP}
Let $\epsilon > 0$ be small. Applying Lemma \ref{lem-Pyramid} to the integer $n-1\geq 2$, there is an $(n-1)$-dimensional convex body $C$ such that 
\begin{equation}\label{eq-PfPropCounter1}
\frac{\H^{n-1}(C)}{\H^{n-2}(\partial C)} < \frac{1}{(1-\epsilon)(n-1)} \frac{\H^{n-1}(\Cs(C))}{\H^{n-2}(\partial \Cs(C))}.
\end{equation}
We can assume that $C \in \K^n$ and $C \subset u^\perp$ for some $u \in \sfe$. By Theorem~\ref{th: GHP},
\begin{equation}\label{eq-PfPropCounter2}
\frac{\H^n(K)}{\H^{n-1}(\partial K)} \le \frac{2(n-1)}{n} \frac{\H^{n-1}(P_{u^\perp} (K))}{\H^{n-2}(\partial P_{u^\perp} (K))}= \frac{2(n-1)}{n} \frac{\H^{n-1}(C)}{\H^{n-2}(\partial C)}
\end{equation}
for every $K \in \K_C$. Passing to the supremum on the left-hand side of \eqref{eq-PfPropCounter2} and combining it with \eqref{eq-PfPropCounter1}, we get
\[
\sup\left\{\frac{\H^n(K)}{\H^{n-1}(\partial K)}:\ K \in \K_C\right\} \le
\frac{2(n-1)}{n} \frac{\H^{n-1}(C)}{\H^{n-2}(\partial C)} < \frac{2}{(1-\epsilon) n} \frac{\H^{n-1}(\Cs(C))}{\H^{n-2}(\partial \Cs(C))}.
\]
This yields the claimed strict inequality in \eqref{ineq: bounded by Cheeger}, provided $\epsilon$ is chosen so small that $\frac{2}{(1-\epsilon) n} < 1$.
\end{proof}

\begin{proof}[Proof of Theorem \ref{th: bounded by Cheeger}]
The strict left-hand inequality in \eqref{ineq: bounded by Cheeger} is obtained from inequality \eqref{eq-lemma3.1} in Lemma~\ref{Lemma integrals}, and from Remark~\ref{Remark after Lemma}. We observe that the family of convex bodies $(K_{\lambda})_{\lambda\geq 1}$ provided by Lemma \ref{Lemma integrals} satisfies $K_\lambda \in \K_C$ for all $\lambda\geq 1$. Theorem \ref{th: bounded by Cheeger1} proves the right-hand inequality of \eqref{ineq: bounded by Cheeger}. Finally, Proposition~\ref{prop-counter_Cheeger} provides a proof of the remainder of Theorem~\ref{th: bounded by Cheeger}.
\end{proof}

\section{Aspects of a positive answer to Question \ref{Q1}}\label{s:4}
In this section, we investigate the framework of a possible positive answer to Question \ref{Q1}. Our first observation is the following corollary of Theorem \ref{th: bounded by Cheeger}, which provides a sufficient condition for a positive answer to \eqref{ineq Q2}.

\begin{corollary}\label{th: corollary1} Let $n\geq 2$, and let $u\in \sfe$ be a direction. Let further $C\in \K^n$ be an $(n-1)$-dimensional Cheeger set, such that $C\subset u^{\perp}$. Then, the inequality
    \begin{equation}\label{ineq corollary refinement} 
        \frac{\H^n(K)}{\H^{n-1}(\partial K)}<  \frac{\H^{n-1}(P_{u^{\perp}}(K))}{\H^{n-2}(\partial P_{u^{\perp}}(K))}
    \end{equation}
    holds for every $K\in \K_C$.
\end{corollary}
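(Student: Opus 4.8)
The plan is to derive Corollary~\ref{th: corollary1} as an immediate consequence of the two main results that have already been established, namely Theorem~\ref{th: bounded by Cheeger} and Theorem~\ref{th:equality for Cheeger} (equivalently, Proposition~\ref{Cheeger sufficient}). Fix $n \ge 2$, a direction $u \in \sfe$, and an $(n-1)$-dimensional convex body $C \subset u^\perp$ which is a Cheeger set, so that $\Cs(C)=C$ and hence $\frac{\H^{n-1}(\Cs(C))}{\H^{n-2}(\partial \Cs(C))}=\frac{\H^{n-1}(C)}{\H^{n-2}(\partial C)}$. Let $K \in \K_C$ be arbitrary; by definition of the canal class, $P_{u^\perp}(K)=C$, so the right-hand side of \eqref{ineq corollary refinement} equals $\frac{\H^{n-1}(C)}{\H^{n-2}(\partial C)}$.

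The key step is to chain the two inequalities of \eqref{ineq: bounded by Cheeger}. Since $K \in \K_C$, the strict left-hand inequality of \eqref{ineq: bounded by Cheeger} (applied with $L=K$) gives
\[
\frac{\H^n(K)}{\H^{n-1}(\partial K)} < \sup\left\{ \frac{\H^n(K')}{\H^{n-1}(\partial K')}:\ K'\in \K_C \right\},
\]
and the right-hand inequality of \eqref{ineq: bounded by Cheeger}, together with the assumption $\Cs(C)=C$, bounds this supremum from above by $\frac{\H^{n-1}(C)}{\H^{n-2}(\partial C)}=\frac{\H^{n-1}(P_{u^\perp}(K))}{\H^{n-2}(\partial P_{u^\perp}(K))}$. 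Concatenating these yields \eqref{ineq corollary refinement} with a strict inequality, as claimed. (Alternatively, one may invoke Theorem~\ref{thm:equality for Cheeger}, which gives that the supremum equals $\frac{\H^{n-1}(C)}{\H^{n-2}(\partial C)}$ exactly, and then the strict left-hand inequality of \eqref{ineq: bounded by Cheeger} upgrades the bound on the individual ratio to a strict one.)

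There is essentially no obstacle here: the corollary is a bookkeeping combination of results already proved. The only point requiring a word of care is the \emph{strictness} of \eqref{ineq corollary refinement}, which does not follow from the right-hand inequality of \eqref{ineq: bounded by Cheeger} alone (that is non-strict, and can in fact be an equality when $C=\Cs(C)$) but does follow once one also uses the strict left-hand inequality, i.e., the fact from Lemma~\ref{Lemma integrals} that no member of a canal class attains the supremum of the ratio $\H^n/\H^{n-1}(\partial\,\cdot)$ over that class. I would therefore present the proof as a two-line deduction, taking care to write the left-hand strict inequality first and then the Cheeger-set bound, so that the final inequality inherits the strictness.

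\begin{proof}[Proof of Corollary~\ref{th: corollary1}]
Since $C$ is a Cheeger set, $\Cs(C)=C$, and thus $\frac{\H^{n-1}(\Cs(C))}{\H^{n-2}(\partial \Cs(C))}=\frac{\H^{n-1}(C)}{\H^{n-2}(\partial C)}$. Let $K\in \K_C$. By definition of the canal class, $P_{u^\perp}(K)=C$, so
\[
\frac{\H^{n-1}(P_{u^\perp}(K))}{\H^{n-2}(\partial P_{u^\perp}(K))}=\frac{\H^{n-1}(C)}{\H^{n-2}(\partial C)}.
\]
Applying Theorem~\ref{th: bounded by Cheeger} with $L=K\in \K_C$, we obtain
\[
\frac{\H^n(K)}{\H^{n-1}(\partial K)} < \sup\left\{ \frac{\H^n(K')}{\H^{n-1}(\partial K')}:\ K'\in \K_C \right\} \le \frac{\H^{n-1}(\Cs(C))}{\H^{n-2}(\partial \Cs(C))}=\frac{\H^{n-1}(P_{u^\perp}(K))}{\H^{n-2}(\partial P_{u^\perp}(K))},
\]
which is precisely \eqref{ineq corollary refinement}.
\end{proof}
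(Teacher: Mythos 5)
Your proof is correct and follows exactly the route the paper intends: the corollary is stated there as an immediate consequence of Theorem~\ref{th: bounded by Cheeger} (with no written proof), obtained by chaining the strict left-hand inequality of \eqref{ineq: bounded by Cheeger} with the right-hand Cheeger bound and using that $C$ being a Cheeger set forces $\frac{\H^{n-1}(\Cs(C))}{\H^{n-2}(\partial\Cs(C))}=\frac{\H^{n-1}(C)}{\H^{n-2}(\partial C)}$. Your explicit remark that the strictness comes from the left-hand inequality of Theorem~\ref{th: bounded by Cheeger} (i.e., Lemma~\ref{Lemma integrals}) is the right point of care and matches the paper's logic.
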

Of course, in this case, in which $C$ is a Cheeger set, we also have that 
\[
\sup\left\{ \frac{\H^n(K)}{\H^{n-1}(\partial K)}:\ K\in \K_C \right\}\leq \frac{\H^{n-1}(P_{u^{\perp}}(K))}{\H^{n-2}(\partial P_{u^{\perp}}(K))}. 
\]
Taking this into consideration, we can provide a proof of Theorem \ref{thm:equality for Cheeger}.
\begin{proof}[Proof of Theorem \ref{thm:equality for Cheeger}]
Let $C\subset u^\perp$ be an $(n-1)$-dimensional Cheeger set, $C\in\K^n$. Then, $P_{u^\perp}(K)=C$ is a Cheeger set too. We recall that $C_h$ denotes the cylinder over $C$. Now, Theorem~\ref{th: bounded by Cheeger} together with \eqref{e: connection to Cheeger}, i.e., 
\[
\lim_{h\rightarrow \infty}\frac{\H^n(C_h)}{\H_{n-1}(\partial C_h)}=\frac{\H^{n-1}(C)}{\H^{n-2}(\partial C)},
\]
ensure that
\[
\sup\left\{ \frac{\H^n(K)}{\H^{n-1}(\partial K)}:\ K\in \K_C \right\}= \frac{\H^{n-1}(C)}{\H^{n-2}(\partial C)},
\]
as $C_h\in \K_C$, for every $h>0$.
\end{proof}

Theorem \ref{th: bounded by Cheeger} also allows us to state the following result, whose proof is an immediate consequence of both inequalities proven in Theorem \ref{th: bounded by Cheeger}.

\begin{corollary}\label{th: corollary2}
    Let $K \in \K^n_n$ be an $n$-dimensional convex body, $n \ge 2$. Then,
    \begin{equation}
        \frac{\H^n(K)}{\H^{n-1}(\partial K)}<  \min\left\{\frac{\H^{n-1}(\Cs(P_{u^{\perp}}(K)))}{\H^{n-2}(\partial\Cs( P_{u^{\perp}}(K)))}:\ u \in \sfe\right\}.
    \end{equation}
\end{corollary}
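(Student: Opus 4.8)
The plan is to apply Theorem~\ref{th: bounded by Cheeger} separately for each direction $u\in\sfe$ and then pass to the minimum over $u$. Fix $u\in\sfe$ and put $C:=P_{u^\perp}(K)\subset u^\perp$. Since $K\in\K^n_n$ is full-dimensional, $C$ is an $(n-1)$-dimensional convex body and $K\in\K_C$. Choosing $L=K$ in Theorem~\ref{th: bounded by Cheeger}, its left-hand strict inequality and its right-hand inequality combine into
\[
\frac{\H^n(K)}{\H^{n-1}(\partial K)}\;<\;\sup\left\{\frac{\H^n(K')}{\H^{n-1}(\partial K')}:\ K'\in\K_C\right\}\;\le\;\frac{\H^{n-1}(\Cs(P_{u^\perp}(K)))}{\H^{n-2}(\partial\Cs(P_{u^\perp}(K)))}=\Cc\big(P_{u^\perp}(K)\big),
\]
the last quantity being exactly the Cheeger constant of the $(n-1)$-dimensional body $P_{u^\perp}(K)$. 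Thus the asserted strict inequality already holds with the minimum replaced by any single direction $u$.

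To conclude, I would show that the infimum of $u\mapsto\Cc(P_{u^\perp}(K))$ over the compact sphere $\sfe$ is attained, so that it is a minimum and the pointwise strict inequalities carry over to it. For this it suffices to check that this map is continuous. Continuity rests on two elementary properties of the Cheeger constant of convex bodies: it is monotone under inclusion -- if $C'\subset C$, then $\Cs(C')\subset C'\subset C$ is admissible for the Cheeger problem of $C$, hence $\Cc(C')\le\Cc(C)$ -- and it is positively $1$-homogeneous, $\Cc(\lambda C)=\lambda\,\Cc(C)$ for $\lambda>0$. The map $u\mapsto P_{u^\perp}(K)$ is continuous for the Hausdorff metric once, near a fixed direction, the hyperplanes $u^\perp$ are identified with $\R^{n-1}$ by a continuously varying family of rotations; the value $\Cc(P_{u^\perp}(K))$ is intrinsic, hence independent of this identification, and only local continuity is needed. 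Since every $P_{u^\perp}(K)$ contains a ball of radius $\inr(K)>0$ and lies in a fixed large ball, for $u_j\to u_0$ the body $P_{u_j^\perp}(K)$ is, for $j$ large, sandwiched between a $(1-\delta)$-dilate and a $(1+\delta)$-dilate of $P_{u_0^\perp}(K)$ about an interior point; monotonicity and homogeneity then give $(1-\delta)\,\Cc(P_{u_0^\perp}(K))\le\Cc(P_{u_j^\perp}(K))\le(1+\delta)\,\Cc(P_{u_0^\perp}(K))$, and letting $\delta\downarrow 0$ yields continuity.

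Granting this, the minimum in the statement is attained at some $u_0\in\sfe$, and the inequality of the first step applied at $u_0$ gives $\frac{\H^n(K)}{\H^{n-1}(\partial K)}<\Cc(P_{u_0^\perp}(K))=\min\{\Cc(P_{u^\perp}(K)):\ u\in\sfe\}$, which is the assertion. The one non-formal ingredient is the continuity of $u\mapsto\Cc(P_{u^\perp}(K))$, equivalently the continuity of the Cheeger constant under Hausdorff convergence of uniformly non-degenerate convex bodies; everything else is a direct substitution into Theorem~\ref{th: bounded by Cheeger}. If one is content with $\inf$ in place of $\min$, this step can be bypassed, since the pointwise strict inequalities already yield $\frac{\H^n(K)}{\H^{n-1}(\partial K)}\le\inf_{u\in\sfe}\Cc(P_{u^\perp}(K))$, and the uniform bound $\Cc(P_{u^\perp}(K))\ge\inr(K)/(n-1)$ shows that this infimum is positive.
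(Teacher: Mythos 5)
Your first step is exactly the paper's proof: the corollary is stated there as an ``immediate consequence of both inequalities proven in Theorem~\ref{th: bounded by Cheeger}'', i.e.\ one applies the strict left-hand inequality with $L=K$ and the right-hand bound for each fixed $u$, with $C=P_{u^\perp}(K)$. Where you go beyond the paper is in justifying that the minimum over $\sfe$ is actually attained; this point is genuinely needed for the strict inequality against a $\min$ (pointwise strict inequalities alone would only give $\le$ against an unattained infimum), and your argument --- monotonicity and $1$-homogeneity of the Cheeger constant plus the sandwiching of $P_{u_j^\perp}(K)$ between dilates of $P_{u_0^\perp}(K)$, which works because full-dimensionality of $K$ gives a uniform inball for all projections --- is correct and fills a step the paper leaves implicit.
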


\subsection{On a (partial) positive answer to Question \ref{Q1} in arbitrary dimension}
Now we obtain some equivalent conditions to a positive answer to Question \ref{Q1}. Our main result in this direction is the following theorem.

\begin{theorem}\label{Characterization theorem n}
    Let $u\in\sfe$, $n \ge 2$, and let $C \in \K^n$ be an $(n-1)$-dimensional convex body, such that $C\subset u^\perp$.
    Then, the following statements are equivalent: 
    \begin{enumerate}
    \item[$(i)$] Question~\ref{Q1} has an affirmative answer, i.e., \eqref{Question 1} holds for $C$.
    
    \item[$(ii)$] For any $K\in \K_C$, we have 
    \[
    \frac{\H^n(K)}{\H^{n-1}(\partial K)}\leq \frac{\H^{n-1}(C)}{\H^{n-2}(\partial C)}=\frac{\H^{n-1}(P_{u^{\perp}}(K))}{\H^{n-2}(\partial P_{u^{\perp}}(K))}.
    \] 

    \item[$(iii)$] For any $K\in \K_C$ and $h \in \R$, let $[K]_h:=K \cap (u^{\perp}+hu)$, as above. Then,
    \[
     \frac{\int_{-\infty}^{+\infty}\H^{n-1}([K]_h)\dlat h}{\int_{-\infty}^{+\infty}\H^{n-2}(\partial [K]_h)\dlat h}\leq \frac{\H^{n-1}(C)}{\H^{n-2}(\partial C)}.
    \] 
    \end{enumerate}
    \end{theorem}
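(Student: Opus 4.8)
The plan is to prove the three equivalences by first reducing Question~\ref{Q1} to a clean supremum bound, and then recognizing $(ii)$ and $(iii)$ as two formulations of exactly the same bound.

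For the implication $(i)\Leftrightarrow(ii)$, I would start from \eqref{e: connection to Cheeger}, which rewrites the right-hand side of \eqref{Question 1} as $\frac{\H^{n-1}(C)}{\H^{n-2}(\partial C)}$. Since $C_h=C+h[\underline{0},u]\in\K_C$ for every $h>0$, the elementary identities $\H^n(C_h)=h\,\H^{n-1}(C)$ and $\H^{n-1}(\partial C_h)=2\H^{n-1}(C)+h\,\H^{n-2}(\partial C)$ show that $\frac{\H^n(C_h)}{\H^{n-1}(\partial C_h)}$ increases to $\frac{\H^{n-1}(C)}{\H^{n-2}(\partial C)}$ as $h\to\infty$, so that
\[
\sup\left\{\frac{\H^n(K)}{\H^{n-1}(\partial K)}:\ K\in\K_C\right\}\ \ge\ \frac{\H^{n-1}(C)}{\H^{n-2}(\partial C)}.
\]
Hence $(i)$, which asserts that this supremum equals $\frac{\H^{n-1}(C)}{\H^{n-2}(\partial C)}$, is equivalent to the reverse inequality $\sup\le\frac{\H^{n-1}(C)}{\H^{n-2}(\partial C)}$, i.e. to $\frac{\H^n(K)}{\H^{n-1}(\partial K)}\le\frac{\H^{n-1}(C)}{\H^{n-2}(\partial C)}$ for every $K\in\K_C$; the middle equality in $(ii)$ is automatic because $P_{u^\perp}(K)=C$ by the definition of $\K_C$. (Finiteness of the supremum, should one want it, is guaranteed by Theorem~\ref{th: GHP}, but it is not needed for the equivalence.)

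For $(ii)\Leftrightarrow(iii)$, observe that both statements are ``for every $K\in\K_C$'' bounds, hence each is equivalent to bounding the corresponding supremum over $\K_C$ by $\frac{\H^{n-1}(C)}{\H^{n-2}(\partial C)}$. By Remark~\ref{Remark after Lemma} (which rests on Lemma~\ref{Lemma integrals}),
\[
\sup\left\{\frac{\H^n(K)}{\H^{n-1}(\partial K)}:\ K\in\K_C\right\}=\sup\left\{\frac{\int_{-\infty}^{+\infty}\H^{n-1}([K]_h)\,\dlat h}{\int_{-\infty}^{+\infty}\H^{n-2}(\partial[K]_h)\,\dlat h}:\ K\in\K_C\right\},
\]
so $(ii)$ and $(iii)$ are literally the same assertion, and the chain of equivalences closes.

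I do not expect a genuine obstacle here: the argument is an assembly of results already in place. The only points requiring care are the monotonicity/direction of the cylinder ratios in \eqref{e: connection to Cheeger} — they approach the limit from below, which is precisely why ``$\sup=$ limit'' collapses to ``$\sup\le$ limit'' — and the bookkeeping that $(ii)$ and $(iii)$ are universally quantified over $K\in\K_C$, so that the equality of suprema in Remark~\ref{Remark after Lemma} applies verbatim.
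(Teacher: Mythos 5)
Your proposal is correct and follows essentially the same route as the paper: the identities $\H^n(C_h)=h\,\H^{n-1}(C)$ and $\H^{n-1}(\partial C_h)=2\H^{n-1}(C)+h\,\H^{n-2}(\partial C)$ give \eqref{e: connection to Cheeger} and reduce $(i)$ to the bound in $(ii)$ (the paper argues the two directions separately, you streamline it by noting the supremum is automatically at least the limit), and $(ii)\Leftrightarrow(iii)$ is exactly the paper's appeal to Remark~\ref{Remark after Lemma}. No gaps.
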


\begin{proof}
We start by proving the equivalence $(i)\ \Leftrightarrow\ (ii)$. 

We observe first, that, for the cylinder $C_h$ of height $h$ over $C$, we have $\H^n(C_h)=h\H^{n-1}(C)$ and $\H^{n-1}(\partial C_h)=2\H^{n-1}(C)+h\H^{n-2}(\partial C)$. This means, as we had implicitly in \eqref{e: connection to Cheeger}, that 
    \[
    \lim_{h\rightarrow \infty}\frac{\H^n(C_h)}{\H^{n-1}(\partial C_h)}=\frac{\H^{n-1}(C)}{\H^{n-2}(\partial C)}.
    \]    
Hence, if $(i)$ holds, then we have, for 
    any $K\in \K_C$, that 
    \[
    \frac{\H^n(K)}{\H^{n-1}(\partial K)}\leq \lim_{h\rightarrow \infty}\frac{\H^n(C_h)}{\H^{n-1}(\partial C_h)}=\frac{\H^{n-1}(C)}{\H^{n-2}(\partial C)}.
    \]   
Conversely, if $(ii)$ holds, then 
    \[
    \sup\left\{ \frac{\H^n(K)}{\H^{n-1}(\partial K)}:\ K\in \K_C \right\}\leq \frac{\H^{n-1}(C)}{\H^{n-2}(\partial C)}=\lim_{h\rightarrow \infty}\frac{\H^n(C_h)}{\H^{n-1}(\partial C_h)}.
    \]
Since $C_h\in \K_C$ for any $h>0$, also the reverse inequality holds.
    
Finally, the equivalence $(ii)\ \Leftrightarrow\ (iii)$ is obtained from Remark \ref{Remark after Lemma}.
\end{proof}

\begin{remark}
    The equivalence $(i)\ \Leftrightarrow\ (ii)$ in Theorem \ref{Characterization theorem n} has been implicitly stated in \cite{GHP}. 
\end{remark}

The next corollary provides us with a first use of Cheeger sets within the context of a positive answer to Question~\ref{Q1}.

\begin{corollary}\label{Sufficient Cheeger}
Let $u\in\sfe$, $n \ge 2$, and let $C \in \K^n$ be an $(n-1)$-dimensional convex body, such that $C\subset u^\perp$. If $C$ is a Cheeger set, then the three statements of Theorem \ref{Characterization theorem n} hold.
\end{corollary}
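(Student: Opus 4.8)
The plan is to deduce everything from the equivalences already established in Theorem~\ref{Characterization theorem n}. Since that theorem shows that statements $(i)$, $(ii)$ and $(iii)$ are equivalent for any $(n-1)$-dimensional convex body $C \subset u^\perp$, it suffices, under the extra hypothesis that $C$ is a Cheeger set, to verify a single one of them; the other two then follow automatically. I would aim at statement $(iii)$, the integral inequality, as it is the one most directly amenable to the tools already developed in the paper.

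To establish $(iii)$, I would simply invoke Proposition~\ref{Cheeger sufficient}, which was proved precisely in this setting: if $C \subset u^\perp$ is an $(n-1)$-dimensional Cheeger set, then
\[
\frac{\int_{-\infty}^{+\infty}\H^{n-1}([K]_h)\,\dlat h}{\int_{-\infty}^{+\infty}\H^{n-2}(\partial [K]_h)\,\dlat h}\leq \frac{\H^{n-1}(C)}{\H^{n-2}(\partial C)}
\]
for every $K \in \K_C$. This is literally statement $(iii)$, and hence $(i)$ and $(ii)$ follow from Theorem~\ref{Characterization theorem n}.

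As an alternative route, and a useful consistency check, one can instead argue through $(ii)$ directly: Theorem~\ref{th: bounded by Cheeger1} gives
\[
\sup\left\{ \frac{\H^n(K)}{\H^{n-1}(\partial K)}:\ K\in \K_C \right\}\leq \frac{\H^{n-1}(\Cs(C))}{\H^{n-2}(\partial\Cs(C))},
\]
and since $C$ is a Cheeger set, the uniqueness of the Cheeger set (Proposition~\ref{Prop Cheeger convex body}) forces $\Cs(C)=C$, so the right-hand side equals $\H^{n-1}(C)/\H^{n-2}(\partial C)$. As $P_{u^\perp}(K)=C$ for every $K\in\K_C$, this is exactly the chain of (in)equalities asserted in $(ii)$, which again yields all three statements via Theorem~\ref{Characterization theorem n}.

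There is essentially no obstacle here: the statement is a formal corollary of results already in hand, so the "proof" is a one-line pointer to Proposition~\ref{Cheeger sufficient} (or to Theorem~\ref{th: bounded by Cheeger1}) combined with Theorem~\ref{Characterization theorem n}. The only point deserving a word of care is the identification $\Cs(C)=C$ for a Cheeger set $C$, which is immediate from the definition of $C$ being a Cheeger set together with the uniqueness in Proposition~\ref{Prop Cheeger convex body}.
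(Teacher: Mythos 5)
Your proof is correct and follows exactly the paper's own argument: by the equivalences in Theorem~\ref{Characterization theorem n} it suffices to verify statement $(iii)$, which is precisely Proposition~\ref{Cheeger sufficient}. The alternative route through $(ii)$ via Theorem~\ref{th: bounded by Cheeger1} and $\Cs(C)=C$ is a valid consistency check but adds nothing beyond the paper's one-line proof.
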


\begin{proof}
Using Theorem \ref{Characterization theorem n}, it is enough to prove any of the items in it. Proposition \ref{Cheeger sufficient} ensures that Theorem \ref{Characterization theorem n} (iii) holds true, as $C$ is a Cheeger set, which finishes the proof.
\end{proof}

In the next subsection, we will prove that, for $n=3$, the condition of being a Cheeger set is indeed necessary for any, and thus, all, of the statements in Theorem
\ref{Characterization theorem n}.

\subsection{Characterization of a positive answer for $n=3$:  Theorem \ref{th intro: n=3}}

In this part of the section, we provide necessary and sufficient conditions in dimension $n=3$ for a positive answer to Question \ref{Q1}. We characterize all $2$-dimensional convex bodies $C\subset\R^3$, for which the answer to Question~\ref{Q1} is affirmative. More precisely we prove that the condition of being a Cheeger set, proven to be sufficient in Corollary \ref{th: corollary1}, is also necessary in this ``low dimensional'' case. This allows us to state a fourth statement equivalent to the three ones in Theorem \ref{Characterization theorem n}, if $n=3$. We will shortly discuss the case $n=2$
at the end of the subsection.

\begin{theorem}[Theorem \ref{th intro: n=3}]\label{Characterization theorem n=3}
    Let $u\in\mathbb{S}^2$. Let further $C \in \K^3$ be a $2$-dimensional convex body, such that $C\subset u^\perp$.
    Then, the following four statements are equivalent:
    \begin{enumerate}
    \item[$(i)$] Question~\ref{Q1} has an affirmative answer, i.e., \eqref{Question 1} holds for $C$.
    
    \item[$(ii)$] For any $K\in \K_C$, we have 
    \[
    \frac{\H^3(K)}{\H^{2}(\partial K)}\leq \frac{\H^{2}(C)}{\H^{1}(\partial C)}=\frac{\H^2(P_{u^{\perp}}(K))}{\H^1(\partial P_{u^{\perp}}(K))}.
    \] 

    \item[$(iii)$] For any $K\in \K_C$,
    \[
     \frac{\int_{-\infty}^{+\infty}\H^{2}([K]_h)\dlat h}{\int_{-\infty}^{+\infty}\H^{1}(\partial [K]_h)\dlat h}\leq \frac{\H^{2}(C)}{\H^{1}(\partial C)}.
    \] 
    \item[$(iv)$] $C$ is a Cheeger set.
    \end{enumerate}
    
\end{theorem}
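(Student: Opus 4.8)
The plan is to prove only the implications that are new. By Theorem~\ref{Characterization theorem n} the statements $(i)$, $(ii)$, $(iii)$ are already equivalent for every $n\ge 2$, and by Corollary~\ref{Sufficient Cheeger} (equivalently Corollary~\ref{th: corollary1}) they all follow from $(iv)$. So for $n=3$ the whole theorem reduces to the single implication "$(i)\Rightarrow(iv)$", which I will prove in contrapositive form: if $C\subset u^\perp$ is a $2$-dimensional convex body that is \emph{not} a Cheeger set, then
$\sup\{\H^3(K)/\H^2(\partial K):K\in\K_C\}>\H^2(C)/\H^1(\partial C)$, so that \eqref{Question 1} fails.

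So assume $\Cs(C)\subsetneq C$. Then $\frac{\H^2(\Cs(C))}{\H^1(\partial\Cs(C))}>\frac{\H^2(C)}{\H^1(\partial C)}$ strictly: the Cheeger constant $h(C)$ is attained at $\Cs(C)$ and dominates $\H^2(C)/\H^1(\partial C)$, and equality would force $C=\Cs(C)$ by the uniqueness in Proposition~\ref{Prop Cheeger convex body}. The body I use is the truncated cone
\[
K:=\conv\bigl((\Cs(C)+\underline 0)\cup(C+u)\bigr)\subset\R^3,
\]
which lies in $\K_C$ since $P_{u^\perp}(K)=\conv(\Cs(C)\cup C)=C$, and whose horizontal section at height $s\in[0,1]$ is the Minkowski combination $[K]_s=\bigl(sC+(1-s)\Cs(C)\bigr)+su$, with $P_{u^\perp}([K]_s)\subset C$ for all $s$. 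Because in the plane the perimeter is additive under Minkowski addition and the area is the associated quadratic (mixed-area) form, one computes
\[
\int_0^1\H^2([K]_s)\,\dlat s=\tfrac13\bigl(\H^2(C)+\mathrm{V}+\H^2(\Cs(C))\bigr),\qquad
\int_0^1\H^1(\partial[K]_s)\,\dlat s=\tfrac12\bigl(\H^1(\partial C)+\H^1(\partial\Cs(C))\bigr),
\]
where $\mathrm{V}:=\mathrm{V}(C,\Cs(C))$ is the mixed area. By Remark~\ref{Remark after Lemma} it therefore suffices to prove the purely planar inequality
\[
\frac23\cdot\frac{\H^2(C)+\mathrm{V}(C,\Cs(C))+\H^2(\Cs(C))}{\H^1(\partial C)+\H^1(\partial\Cs(C))}>\frac{\H^2(C)}{\H^1(\partial C)}.
\]

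To evaluate the mixed term I use the structure of planar Cheeger sets: $\Cs(C)=(C\ominus rB_2)\oplus rB_2$ with $r=h(C)$ and $\H^2(C\ominus rB_2)=\pi r^2$. Writing $C_{-r}:=C\ominus rB_2$, bilinearity of the mixed area gives $\mathrm{V}(C,\Cs(C))=\mathrm{V}(C,C_{-r})+\tfrac r2\H^1(\partial C)$; and since $h_{C_{-r}}\le h_C-r$, with equality exactly in the $r$-regular directions, one gets $\mathrm{V}(C,C_{-r})=\H^2(C)-\tfrac r2\H^1(\partial C)-\tfrac12\Delta$, where $\Delta:=\int_{\sfe\cap u^\perp}\bigl(h_C-h_{\Cs(C)}\bigr)\,dS_C\ge 0$ and $\Delta=0$ iff $C=\Cs(C)$. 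Hence $\mathrm{V}(C,\Cs(C))=\H^2(C)-\tfrac12\Delta$. Substituting this, using $\frac{\H^2(\Cs(C))}{\H^1(\partial\Cs(C))}=r\ge\frac{\H^2(C)}{\H^1(\partial C)}$, and using the bound $\H^2(C)-\H^2(\Cs(C))\ge\Delta\bigl(1-\Delta/(4\H^2(C))\bigr)$ that comes from Minkowski's inequality $\mathrm{V}(C,\Cs(C))^2\ge\H^2(C)\H^2(\Cs(C))$, the inequality to be proved becomes an elementary one in the five quantities $\H^2(C),\H^2(\Cs(C)),\Delta,r,\H^1(\partial C)$, which is strict precisely when $\Cs(C)\ne C$. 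At this last step one also invokes Lemma~\ref{FGM}, applied to the Minkowski decomposition $\Cs(C)=C_{-r}\oplus rB_2$, which yields the supplementary estimate $\H^1(\partial C_{-r})\ge 2\pi r$, i.e.\ $\H^1(\partial\Cs(C))\ge 4\pi r$; this is exactly the point where $n=3$ is used essentially — the horizontal sections are then \emph{planar}, so Lemma~\ref{FGM} is available, and the argument does not carry over to $n\ge 4$.

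The main obstacle is precisely this final, non-formal planar inequality: it encodes the fact that the Cheeger deficit $C\setminus\Cs(C)$ is "perimeter heavy" (measured by $\Delta$ and by $r>\H^2(C)/\H^1(\partial C)$), and the estimates above must be arranged to hold both in the near-Cheeger regime — where $\Delta$ is small and a first-order expansion shows the inequality is equivalent to $\Cs(C)\ne C$ — and in the degenerate regime, e.g.\ for very flat $C$, where the left-hand side tends to $\tfrac43\cdot\frac{\H^2(C)}{\H^1(\partial C)}$, in accordance with the sharp bound of Theorem~\ref{th: GHP}. Finally, for $n=2$ there is nothing to prove: $P_{u^\perp}(K)$ is a segment, every segment is its own Cheeger set, and a positive answer to Question~\ref{Q1} for planar bodies is classical (cf.\ \cite{Bonnesen,GHP}).
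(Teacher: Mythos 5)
Your reduction is the right one: by Theorem~\ref{Characterization theorem n} and Corollary~\ref{Sufficient Cheeger} everything comes down to showing that if $C$ is not a Cheeger set then $(iii)$ fails, and your witness body $K=\conv\bigl(\Cs(C)\cup(C+u)\bigr)$, whose sections are the Minkowski interpolants $s\,C+(1-s)\Cs(C)$, is essentially the one the paper uses (the paper takes $\conv(C\cup(B+u))$ for \emph{any} $B\subset C$ with a better ratio, which even avoids invoking existence of the Cheeger set). The gap is in how you finish. After reducing to
\[
\frac23\cdot\frac{\H^2(C)+\V(C,\Cs(C))+\H^2(\Cs(C))}{\H^1(\partial C)+\H^1(\partial\Cs(C))}>\frac{\H^2(C)}{\H^1(\partial C)},
\]
you never actually prove this: the claim that it ``becomes an elementary inequality in five quantities, which is strict precisely when $\Cs(C)\ne C$'' is asserted, not established. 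Worse, the machinery you set up to attack it is partly broken. Your quantity $\Delta=\int(h_C-h_{\Cs(C)})\,dS_C$ does \emph{not} detect $\Cs(C)\ne C$: for a polygon (e.g.\ a long rectangle) the Cheeger set touches every facet, so $h_C=h_{\Cs(C)}$ on the support of the surface area measure $S_C$ and $\Delta=0$ even though $\Cs(C)\subsetneq C$; hence $\V(C,\Cs(C))=\H^2(C)$ there and any strictness argument based on ``$\Delta>0$'' collapses. The bound $\H^2(C)-\H^2(\Cs(C))\ge\Delta\bigl(1-\Delta/(4\H^2(C))\bigr)$ is likewise unjustified, and the appeal to Lemma~\ref{FGM} for $\H^1(\partial\Cs(C))\ge4\pi r$ (which is fine in itself) addresses none of this.

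The repair is short and is exactly the paper's argument: apply Lemma~\ref{FGM} to each slice rather than to the parallel-body decomposition of $\Cs(C)$. With $f(s)=\H^2\bigl(sC+(1-s)\Cs(C)\bigr)$ and $g(s)=\H^1\bigl(\partial(sC+(1-s)\Cs(C))\bigr)$, Lemma~\ref{FGM} and homogeneity give
\[
\frac{f(s)}{g(s)}\;\ge\;s\,\frac{\H^2(C)}{\H^1(\partial C)}+(1-s)\,\frac{\H^2(\Cs(C))}{\H^1(\partial\Cs(C))}\;>\;\frac{\H^2(C)}{\H^1(\partial C)}\qquad\text{for }s\in[0,1),
\]
using that $\H^2(\Cs(C))/\H^1(\partial\Cs(C))>\H^2(C)/\H^1(\partial C)$ strictly (which you justified correctly via uniqueness in Proposition~\ref{Prop Cheeger convex body}). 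Then $\int_0^1 f\,\dlat s>\tfrac{\H^2(C)}{\H^1(\partial C)}\int_0^1 g\,\dlat s$, contradicting $(iii)$ via Remark~\ref{Remark after Lemma}. This entirely bypasses the mixed-area bookkeeping and the planar Cheeger structure theory; Lemma~\ref{FGM} being a genuinely two-dimensional statement is also the precise point where $n=3$ enters, as you correctly observed.
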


\begin{proof}
From Theorem \ref{Characterization theorem n}, we directly obtain the equivalences of $(i),(ii)$ and $(iii)$. Further, Proposition \ref{Cheeger sufficient} yields that $(iv)$ implies $(iii)$. Thus, it is enough to prove that $(iii)$ implies $(iv)$.  We proceed by contradiction.

 We start assuming that $(iv)$ fails. Then $C$ is not a Cheeger set, i.e., there is a $2$-dimensional convex body $B\subset C$, such that 
    \begin{equation}\label{contradiction1}
    \frac{\H^2(B)}{\H^1(\partial B)}>\frac{\H^2(C)}{\H^1(\partial C)}.
    \end{equation}
    We consider now the convex body $K:=\conv(C \cup (B+u))=\bigcup_{h=0}^1 (1-h)C+h(B+u)\in \K_C$. Note that $[K]_h=((1-h)C+hB)+hu$, so that the map $h \mapsto [K]_h$ is continuous on $[0,1]$ in the Hausdorff metric. We consider the two continuous functions $f(h):=\H^2([K]_h)$ and $g(h):=\H^1(\partial [K]_h)$ defined on $[0,1]$.
    Lemma~\ref{FGM} yields
    \[
    \begin{split}
    \frac{f(h)}{g(h)}&=\frac{\H^2((1-h)C+hB)}{\H^1(\partial((1-h)C+hB))}
    \geq \frac{\H^2((1-h)C)}{\H^1(\partial ((1-h)C))}+\frac{\H^2(hB)}{\H^1(\partial (hB))}\\
    &=(1-h)\frac{\H^2(C)}{\H^1(\partial C)}+h\frac{\H^2(B)}{\H^1(\partial B)}.
    \end{split}
    \]
    By \eqref{contradiction1}, we have
    \[
\frac{f(h)}{g(h)}>\frac{\H^2(C)}{\H^1( \partial C)}
    \]
    for any $h\in (0,1]$, which implies 
    \[
\frac{\int_{-\infty}^{+\infty}\H^{2}([K]_h)\dlat h}{\int_{-\infty}^{+\infty}\H^{1}(\partial [K]_h)\dlat h}
    =\frac{\int_0^1 f(h)\dlat h}{\int_0^1 g(h)\dlat h}>\frac{\H^2(C)}{\H^1(\partial C)},
    \]
    contrary to $(iii)$.
\end{proof}

The following direct consequence of the equivalence $(ii)\ \Leftrightarrow (iv)$ in Theorem~\ref{Characterization theorem n=3} sharpens Corollary~\ref{th: corollary1} in the $3$-dimensional case. 
Moreover, it gives \textit{the appropriate} answer to Question~\ref{Q1} in the $3$-dimensional case. 
\begin{corollary}\label{cor: dim 3 Cheeger charact}
Let $u\in \s^2$, and let $C\in \K^3$ be a $2$-dimensional convex body, such that $C\subset u^{\perp}$. Then, the inequality
    \[ 
        \frac{\H^3(K)}{\H^2(\partial K)}\leq  \frac{\H^2(P_{u^{\perp}}(K))}{\H^{1}(\partial P_{u^{\perp}}(K))}=\frac{\H^2(C)}{\H^1(\partial C)}
    \]
    holds for every $K\in \K_C$ if and only if $C$ is a $2$-dimensional Cheeger set. 
\end{corollary}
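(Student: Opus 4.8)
The plan is to observe that this corollary is nothing but a convenient reformulation of the equivalence $(ii)\Leftrightarrow(iv)$ already established in Theorem~\ref{Characterization theorem n=3}. Indeed, for $K\in\K_C$ one has $P_{u^\perp}(K)=C$ by the very definition of the canal class, so the equality $\frac{\H^2(P_{u^\perp}(K))}{\H^1(\partial P_{u^\perp}(K))}=\frac{\H^2(C)}{\H^1(\partial C)}$ is automatic, and the displayed chain of (in)equalities in the corollary, quantified over all $K\in\K_C$, is precisely statement $(ii)$ of that theorem. Likewise, ``$C$ is a $2$-dimensional Cheeger set'' is statement $(iv)$. Hence the asserted ``if and only if'' is exactly $(ii)\Leftrightarrow(iv)$, and there is nothing to prove beyond invoking Theorem~\ref{Characterization theorem n=3}.

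For completeness I would spell out the two directions separately. If $C$ is a Cheeger set, then Corollary~\ref{th: corollary1} (equivalently, Proposition~\ref{Cheeger sufficient} together with Theorem~\ref{Characterization theorem n}) already gives the inequality — in fact strictly — for every $K\in\K_C$, which in particular yields the non-strict inequality claimed. Conversely, if $\frac{\H^3(K)}{\H^2(\partial K)}\leq\frac{\H^2(C)}{\H^1(\partial C)}$ holds for every $K\in\K_C$, then statement $(ii)$ of Theorem~\ref{Characterization theorem n=3} holds, and the implications $(ii)\Rightarrow(iii)\Rightarrow(iv)$ proved there force $C$ to be a Cheeger set.

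I do not expect a genuine obstacle here: all the substantive work is already carried out inside Theorem~\ref{Characterization theorem n=3}, whose only nontrivial step is the implication $(iii)\Rightarrow(iv)$. That step is the real content, and it is the place where dimension $n=3$ is used: assuming $C$ is not a Cheeger set, one picks a planar convex body $B\subset C$ with $\frac{\H^2(B)}{\H^1(\partial B)}>\frac{\H^2(C)}{\H^1(\partial C)}$, forms $K=\conv(C\cup(B+u))\in\K_C$, and applies the planar inequality of Lemma~\ref{FGM} to each slice $[K]_h=((1-h)C+hB)+hu$ to conclude that $K$ violates $(iii)$. Since this is already in place, the present corollary follows at once as an unwinding of the definitions.
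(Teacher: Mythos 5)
Your proposal is correct and matches the paper exactly: the corollary is stated there as a direct consequence of the equivalence $(ii)\Leftrightarrow(iv)$ in Theorem~\ref{Characterization theorem n=3}, with no further argument needed since $P_{u^\perp}(K)=C$ for $K\in\K_C$ makes the displayed statement literally coincide with $(ii)$. Your additional remarks on the two directions and on where dimension $3$ enters are accurate but not required.
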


For dimension $n=2$, the analog of Theorem~\ref{Characterization theorem n=3} is satisfied in so far as all four claims are true. In fact, (iv) is satisfied, because every $1$-dimensional convex body is a segment, and in turn a Cheeger set. Then Corollary~\ref{Sufficient Cheeger} implies the analogues of (i), (ii) and (iii).

\section{Inequality \eqref{e: 1} under projection constraints}\label{s:5}

The proof of Theorem~\ref{Equal projec} is a fairly direct application of the linear refinement in Theorem~\ref{th: linear BM} of the Brunn-Minkowski inequality under the assumption of projection constraints. 

\begin{proof}[Proof of Theorem~\ref{Equal projec}]
Let $K,L\in \K^n$ be convex bodies, and let $u\in\sfe$ be such that the projections  $P_{u^\perp}(K)$ and $P_{u^\perp}(L)$ are $(n-1)$-dimensional homothetic convex bodies. It is not difficult to see that there are homothetic copies of $K$ and $L$ having coincident projections onto $u^\perp$. W.l.o.g., $P_{u^\perp}(K)=\mu P_{u^\perp}(L)$ for some $\mu>0$. Then,
\begin{equation}
\begin{array}{ccccc}
P_{u^\perp}(K)&=&(1-\lambda) (P_{u^\perp}(K)+P_{u^\perp}(L))&=&(1-\lambda)P_{u^\perp}(K+L),\\ 
P_{u^\perp}(L) &=& \lambda (P_{u^\perp}(K)+P_{u^\perp}(L)) &=&\lambda P_{u^\perp}(K+L)\,
\end{array}
\label{eq:pfE}
\end{equation}
for $0 < \lambda:=\frac{1}{1+\mu} < 1$. Thus, $\frac{1}{1-\lambda}K$ and $\frac{1}{\lambda}L$ belong to the same canal class in direction $u$, as $P_{u^\perp}\left(\frac{1}{1-\lambda}\,K\right)=P_{u^\perp}(K+L)=P_{u^\perp}\left(\frac{1}{\lambda}\,L\right)$. 

Theorem~\ref{th: linear BM} yields
\[
\begin{split}
\frac{\H^n(K+L)}{\H^{n-1}(P_{u^\perp}(K+L))} &=\frac{\H^n\left( (1-\lambda) \frac{1}{1-\lambda}K+ \lambda \frac{1}{\lambda}L\right)}{\H^{n-1}(P_{u^\perp}(K+L))} \\
&\ge (1-\lambda) \frac{\H^n\left(\frac{1}{1-\lambda}K\right)}{\H^{n-1}(P_{u^\perp}(K+L))}+\lambda \frac{\H^n\left( \frac{1}{\lambda}L\right)}{\H^{n-1}(P_{u^\perp}(K+L))}\\
&=(1-\lambda) \frac{\H^n\left(\frac{1}{1-\lambda}K\right)}{\H^{n-1}\left(\frac{1}{1-\lambda}P_{u^\perp}(K)\right)}+\lambda \frac{\H^n\left( \frac{1}{\lambda}L\right)}{\H^{n-1}\left(\frac{1}{\lambda}P_{u^\perp}(L)\right)}\\
&=\frac{\H^n(K)}{\H^{n-1}(P_{u^\perp}(K))}+ \frac{\H^n(L)}{\H^{n-1}(P_{u^\perp}(L))},
\end{split}
\]
where we also used \eqref{eq:pfE} in the second equality. This finishes the proof of inequality~\eqref{ineq equal projec}.

The remainder of Theorem~\ref{Equal projec} is given separately in the following proposition.
\end{proof}

We observe that, in the previous proof, we applied Theorem \ref{th: linear BM} using the stronger assumption of equal projections of (particular homothets of) $K$ and $L$ onto $u^\perp$, instead of equal volumes of the projections of $K$ and $L$ onto $u^\perp$. It is natural to wonder whether inequality~\eqref{ineq equal projec} may also hold under the milder assumption that $\H^{n-1}(P_{u^\perp} (K))=\H^{n-1}(P_{u^\perp} (L))$. The next proposition shows that we cannot weaken the assumption to equal volume of the projections.

\begin{proposition}\label{th: counterexample equal volume}
Let $n\geq 3$. There exist two convex bodies $K,L\in \K^n_n$, and $u\in \sfe$, such that $\H^{n-1}(P_{u^\perp}(K))=\H^{n-1}(P_{u^\perp}(L)) \ne 0$, but inequality~\eqref{ineq equal projec} does not hold.
\end{proposition}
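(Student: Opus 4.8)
The plan is to obtain the statement from the failure of \eqref{ineq equal projec} established for every $n\ge 3$ in \cite{FMMZ}, by a linear normalisation that forces the two projections to have equal $(n-1)$-volume while leaving the reversed inequality intact. Fix $u\in\sfe$ and let $\tilde K,\tilde L\in\K^n_n$ be such that
\[
\frac{\H^n(\tilde K+\tilde L)}{\H^{n-1}(P_{u^\perp}(\tilde K+\tilde L))}<\frac{\H^n(\tilde K)}{\H^{n-1}(P_{u^\perp}(\tilde K))}+\frac{\H^n(\tilde L)}{\H^{n-1}(P_{u^\perp}(\tilde L))},
\]
and set $\alpha:=\H^{n-1}(P_{u^\perp}(\tilde K))>0$, $\beta:=\H^{n-1}(P_{u^\perp}(\tilde L))>0$. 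If $\alpha=\beta$ there is nothing more to do, so assume $\alpha\neq\beta$.

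For $s>0$ let $D_s\colon\R^n\to\R^n$ be the linear map that multiplies every vector of $u^\perp$ by $s$ and fixes $u$ — the dilation anisotropic in the hyperplane $u^\perp$ — and put $K_s:=D_s\tilde K\in\K^n_n$. Since $\H^n(D_sM)=s^{n-1}\H^n(M)$ and $\H^{n-1}(P_{u^\perp}(D_sM))=s^{n-1}\H^{n-1}(P_{u^\perp}(M))$ for every $M\in\K^n_n$ (as $P_{u^\perp}\circ D_s$ equals the scaling of $u^\perp$ by $s$ composed with $P_{u^\perp}$), the quotient $\H^n(M)/\H^{n-1}(P_{u^\perp}(M))$ is invariant under $M\mapsto D_sM$; hence the right-hand side of \eqref{ineq equal projec} for the pair $(K_s,\tilde L)$ equals $\H^n(\tilde K)/\H^{n-1}(P_{u^\perp}(\tilde K))+\H^n(\tilde L)/\H^{n-1}(P_{u^\perp}(\tilde L))$ for every $s$. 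On the other hand $\H^{n-1}(P_{u^\perp}(K_s))=s^{n-1}\alpha$ sweeps continuously over $(0,\infty)$, so there is a unique $s_0>0$ with $s_0^{n-1}\alpha=\beta$. For this $s_0$ the bodies $K:=K_{s_0}$ and $L:=\tilde L$ satisfy $\H^{n-1}(P_{u^\perp}(K))=\H^{n-1}(P_{u^\perp}(L))\neq 0$, the required constraint.

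What remains is to verify that \eqref{ineq equal projec} still fails for $(K_{s_0},\tilde L)$, i.e.\ that $\H^n(K_{s_0}+\tilde L)/\H^{n-1}(P_{u^\perp}(K_{s_0}+\tilde L))$ is strictly smaller than the (unchanged) right-hand side. The function $s\mapsto \H^n(K_s+\tilde L)/\H^{n-1}(P_{u^\perp}(K_s+\tilde L))$ is continuous and, at $s=1$, equals $\H^n(\tilde K+\tilde L)/\H^{n-1}(P_{u^\perp}(\tilde K+\tilde L))$, which lies strictly below the right-hand side by the choice of $(\tilde K,\tilde L)$; thus the strict inequality persists for all $s$ near $1$, and one only needs to follow it along $s\in[1,s_0]$ (or $[s_0,1]$). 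I expect this to be the main obstacle: the dilation $D_{s_0}$ does not act simply on the Minkowski sum $\tilde K+\tilde L$, so bounding $\H^n(K_{s_0}+\tilde L)=\H^n(D_{s_0}\tilde K+\tilde L)$ from above — say through its mixed-volume expansion in powers of $D_{s_0}\tilde K$ and $\tilde L$ — appears to require the concrete geometry of the example of \cite{FMMZ}, rather than the general inequalities at our disposal (Theorem~\ref{th: linear BM} and Brunn--Minkowski), both of which only push \eqref{ineq equal projec} towards holding. (If one prefers to work in a single low dimension, it suffices to produce the pair in $\R^3$: replacing $K,L\subset\R^3$ by $K\times[0,1]^{n-3}$ and $L\times[0,1]^{n-3}$ in $\R^n$, with the same $u$, multiplies both $\H^n(\cdot)$ and $\H^{n-1}(P_{u^\perp}(\cdot))$ by the same constant, commutes with Minkowski sums, and therefore transports both the equal-projection condition and the failure of \eqref{ineq equal projec} from $\R^3$ to $\R^n$.)
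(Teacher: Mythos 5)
Your reduction has a genuine gap at exactly the point you flag. The anisotropic dilation $D_s$ does leave both summands' ratios $\H^n(\cdot)/\H^{n-1}(P_{u^\perp}(\cdot))$ invariant, and choosing $s_0$ with $s_0^{n-1}\alpha=\beta$ does equalise the projection volumes; but nothing in your argument shows that the reversed inequality survives the passage from $s=1$ to $s=s_0$. Continuity of $s\mapsto \H^n(D_s\tilde K+\tilde L)/\H^{n-1}(P_{u^\perp}(D_s\tilde K+\tilde L))$ only guarantees that the strict inequality persists on some neighbourhood of $s=1$, whereas $s_0=(\beta/\alpha)^{1/(n-1)}$ is dictated by the particular example and may lie far outside that neighbourhood; there is no monotonicity or convexity available (Brunn--Minkowski and Theorem~\ref{th: linear BM} push the wrong way, as you note), so the inequality could well flip before $s$ reaches $s_0$. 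In addition, the starting pair $(\tilde K,\tilde L)$ is imported from \cite{FMMZ} without being constructed, so the one quantity that would let you control $s_0$ --- the ratio $\beta/\alpha$ --- is not at your disposal. As written, the proof is therefore incomplete; completing it would require either verifying $\alpha=\beta$ for a concrete counterexample or estimating $\H^n(D_{s_0}\tilde K+\tilde L)$ directly, which is the hard part.

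For comparison, the paper avoids the normalisation problem by building the pair from scratch so that the projection volumes are equal by design. It first produces (Lemma~\ref{lem:Pf  counterexample equal volume 1}) a body $K_n$ with
\[
\frac{\H^n(K_n)}{\H^{n-1}(P_{e_1^\perp}(K_n))} > \frac{\H^{n-1}(P_{e_2^\perp}(K_n))}{\H^{n-2}(P_{e_2^\perp}(P_{e_1^\perp}(K_n)))},
\]
and then takes $L_h=[\underline{0},h^{n-2}e_2]+[\underline{0},\tfrac1h e_3]+\dots+[\underline{0},\tfrac1h e_n]$, whose projection onto $e_1^\perp$ has volume $h^{n-2}\cdot h^{-(n-2)}=1$ for every $h$, so the equal-projection constraint holds automatically after normalising $K_n$. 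Lemma~\ref{lem:Pf  counterexample equal volume 2} then shows that adding this long thin zonotope drives the left-hand side of \eqref{ineq equal projec} down to the right-hand quantity of the display above, which is strictly below even the $K_n$-summand of the right-hand side of \eqref{ineq equal projec}; a small thickening makes $L_h$ full-dimensional. Your final remark about transporting a $3$-dimensional example to $\R^n$ via products with cubes is sound (the extra factor $2^{n-3}$ in the sum cancels in the relevant ratio), but it does not repair the missing step above.
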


Proposition~\ref{th: counterexample equal volume} is prepared by two lemmas.

\begin{lemma}\label{lem:Pf  counterexample equal volume 1}
For every $n \ge 3$, there is an $n$-dimensional body $K_n \in\K^n_n$ such that 
\begin{equation}\label{eq:pfL1 counterexample equal volume}
\frac{\H^n(K_n)}{\H^{n-1}(P_{e_1^\perp}(K_n))} > \frac{\H^{n-1}(P_{e_2^\perp}(K_n))}{\H^{n-2}(P_{e_2^\perp}(P_{e_1^\perp}(K_n)))}.
\end{equation}
\end{lemma}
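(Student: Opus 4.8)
The plan is to exhibit an explicit, highly anisotropic body whose $x_1$-direction is "long" and whose $x_2$-direction is "short", so that dividing the volume by the $(n-1)$-dimensional area of the projection onto $e_1^\perp$ (which kills the long direction) yields a large number, while the right-hand side, which involves only the "slim" cross-sectional directions (after projecting away both $e_1$ and $e_2$), stays bounded. A natural candidate is a box-like body, say $K_n = [0,M]\times[0,1]\times Q$, where $Q\subset\R^{n-2}$ is a fixed $(n-2)$-dimensional convex body (e.g. the unit cube $[0,1]^{n-2}$) and $M>0$ is a large parameter to be chosen. For such a product body every projection is again a product of the corresponding factors, so all the quantities in \eqref{eq:pfL1 counterexample equal volume} can be computed in closed form.

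First I would record the elementary identities for this $K_n$: $\H^n(K_n)=M\cdot 1\cdot\H^{n-2}(Q)$; the projection onto $e_1^\perp$ is $[0,1]\times Q$, so $\H^{n-1}(P_{e_1^\perp}(K_n))=\H^{n-2}(Q)$; the projection onto $e_2^\perp$ is $[0,M]\times Q$, so $\H^{n-1}(P_{e_2^\perp}(K_n))=M\,\H^{n-2}(Q)$; and $P_{e_2^\perp}(P_{e_1^\perp}(K_n))=Q$, so $\H^{n-2}$ of it is just $\H^{n-2}(Q)$. Substituting, the left-hand side of \eqref{eq:pfL1 counterexample equal volume} equals $M$, while the right-hand side equals $M\,\H^{n-2}(Q)/\H^{n-2}(Q)=M$ — so a pure box gives equality, not strict inequality. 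Hence the box must be perturbed: the point is that $\H^{n-2}(\partial Q)$ should appear nowhere, but we need the RHS denominator $\H^{n-2}(P_{e_2^\perp}(P_{e_1^\perp}(K_n)))$ to be genuinely larger than what a naive bound suggests, or equivalently we should shrink $\H^{n-1}(P_{e_1^\perp}(K_n))$ relative to the volume. The fix is to make the body "bulge" in the $x_1$-direction only in a thin slab, e.g. take $K_n=\conv\big(([0,M]\times\{0\}\times Q)\cup([0,M]\times\{1\}\times (tQ))\big)$ for a small dilation factor $t<1$, or more simply replace the second factor's interaction: let $K_n$ be a thin cone or frustum in the $(x_1,x_2)$-plane times $Q$. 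Concretely I would try $K_n = T\times Q$ where $T\subset\R^2$ is the triangle $\conv\{(0,0),(M,0),(0,1)\}$: then $\H^n(K_n)=\tfrac{M}{2}\H^{n-2}(Q)$, $P_{e_1^\perp}(K_n)=[0,1]\times Q$ has $\H^{n-1}=\H^{n-2}(Q)$, $P_{e_2^\perp}(K_n)=[0,M]\times Q$ has $\H^{n-1}=M\,\H^{n-2}(Q)$, and $P_{e_2^\perp}(P_{e_1^\perp}(K_n))=Q$; LHS $=M/2$, RHS $=M$, which is the wrong direction. So instead the long direction must be the one that survives in the RHS numerator: interchange roles, i.e. make the body long in $x_2$ and short in $x_1$. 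Take $K_n=\conv\{(0,0),(1,0),(0,M)\}\times Q$, so $\H^n(K_n)=\tfrac{M}{2}\H^{n-2}(Q)$, $P_{e_1^\perp}(K_n)=[0,M]\times Q$ with $\H^{n-1}=M\,\H^{n-2}(Q)$, giving LHS $=\tfrac12$; and $P_{e_2^\perp}(K_n)=[0,1]\times Q$ with $\H^{n-1}=\H^{n-2}(Q)$, $P_{e_2^\perp}(P_{e_1^\perp}(K_n))=Q$, giving RHS $=\H^{n-2}(Q)/\H^{n-2}(Q)=1$. Still wrong.

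The correct mechanism, I now see, is that the RHS is of "Cheeger/isoperimetric ratio" shape in the lower dimension $n-2$ after the double projection, so it is controlled by $\H^{n-2}(Q)$ against a *diameter-like* quantity; by choosing $Q$ to be itself very anisotropic (long in one direction, thin in the rest) we can force that ratio small while the LHS, involving $\H^{n-1}(P_{e_2^\perp}(K_n))$ which retains a long direction, stays order one or larger. So the actual construction: fix $N$ large, let $Q=[0,N]\times[0,1]^{n-3}\subset\R^{n-2}$ (for $n=3$ take $Q=[0,N]\subset\R^1$), and let $K_n=[0,1]\times[0,1]\times Q\subset\R^n$ — a box long only in the last surviving coordinate. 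Then $\H^n(K_n)=N$, $\H^{n-1}(P_{e_1^\perp}(K_n))=N$ (it is $[0,1]\times Q$), so LHS $=1$. Meanwhile $P_{e_2^\perp}(K_n)=[0,1]\times Q$, $\H^{n-1}=N$, and $P_{e_2^\perp}(P_{e_1^\perp}(K_n))=Q$ with $\H^{n-2}(Q)=N$, giving RHS $=N/N=1$ again. The stubborn equality signals that for products the two sides literally always agree, so I must break the product structure between the first two coordinates and the rest. The honest plan, then, is: (1) start from a long-in-$e_1$ slab-type body so that the LHS ratio, $\H^n/\H^{n-1}(P_{e_1^\perp})$, is essentially half the $e_1$-extent (large); (2) simultaneously arrange the $e_2$-extent to be small, so the denominator of the RHS, $\H^{n-2}(P_{e_2^\perp}P_{e_1^\perp}(K_n))$, is comparable to the full "area from above" of the $(n-2)$-dimensional shadow — in particular bounded below by a fixed constant independent of the large parameter; (3) and ensure the RHS numerator $\H^{n-1}(P_{e_2^\perp}(K_n))$ grows only *sub-linearly* in the large parameter, by tilting the body so that its $e_1$-elongated part becomes "stacked" in the $e_2$-direction after projection rather than spread out. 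The cleanest realization is a thin spiralling/tilted prism: take the parallelepiped $K_n$ spanned by the long edge vector $M e_1 + e_2$, the short edge $\delta e_2$, and unit edges $e_3,\dots,e_n$. Then $\H^n(K_n)=M\delta$; $P_{e_1^\perp}(K_n)$ is the parallelepiped spanned by $e_2$ (from the $Me_1+e_2$ edge), $\delta e_2$, $e_3,\dots,e_n$ — which is degenerate, forcing a genuinely $2$-dimensional tilt in the $e_1$-slab, e.g. add a tiny $e_1$-width; doing the bookkeeping, $\H^{n-1}(P_{e_1^\perp}(K_n))\approx \delta$, so LHS $\approx M$, while $P_{e_2^\perp}(K_n)$ collapses the long edge to $Me_1$, area $\approx M\cdot(\text{small})$, and the double projection has $\H^{n-2}\approx 1$, giving RHS $\approx M\cdot(\text{small})/1 \ll M$.

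The main obstacle, as the above trial-and-error shows, is precisely that for any *product* body the two sides of \eqref{eq:pfL1 counterexample equal volume} coincide, so the whole content of the lemma is to find a non-product tilt that decouples them. I would therefore commit to the tilted-parallelepiped family $K_n(M,\delta)=\{\,\sum_{i=1}^n t_i w_i : t_i\in[0,1]\,\}$ with $w_1=Me_1+e_2$, $w_2=\delta e_1$, $w_i=e_i$ for $3\le i\le n$, compute all three relevant measures via the matrix-of-edges determinant formula and via the fact that projecting a parallelepiped onto a coordinate hyperplane replaces the edge matrix by its submatrix with the corresponding row deleted (then its volume is $\sqrt{\det(\cdot\,^T\cdot)}$ of the remaining $n\times(n-1)$ matrix), and finally choose $\delta=\delta(M)\to 0$ slowly enough (e.g. $\delta = M^{-1/2}$) so that LHS $\to\infty$ while RHS stays bounded, which makes \eqref{eq:pfL1 counterexample equal volume} hold for all sufficiently large $M$. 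The determinant/Gram computation for the single perturbed column is the only real calculation, and it is elementary linear algebra; everything else is substitution.
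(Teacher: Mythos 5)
Your proposal never arrives at a working example, and the family you finally commit to provably cannot work. For any parallelepiped (indeed any zonotope) $Z=\sum_i[0,w_i]$, projections commute with Minkowski sums, so $P_{u^\perp}(Z)=\sum_i[0,P_{u^\perp}(w_i)]$; writing $W$ for the edge matrix, $D_{1,i}$, $D_{2,j}$, $D_{12,ij}$ for the minors obtained by deleting the indicated rows and columns, one has $\H^n(Z)=|\det W|$, $\H^{n-1}(P_{e_k^\perp}(Z))=\sum_i|D_{k,i}|$ and $\H^{n-2}(P_{e_2^\perp}(P_{e_1^\perp}(Z)))=\sum_{i<j}|D_{12,ij}|$. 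The Jacobi (Desnanot--Jacobi) identity $\det W\cdot D_{12,ij}=\pm\bigl(D_{1,i}D_{2,j}-D_{1,j}D_{2,i}\bigr)$ together with the triangle inequality gives
\[
\H^n(Z)\,\H^{n-2}\bigl(P_{e_2^\perp}(P_{e_1^\perp}(Z))\bigr)\le\sum_{i\ne j}|D_{1,i}||D_{2,j}|\le \H^{n-1}\bigl(P_{e_1^\perp}(Z)\bigr)\,\H^{n-1}\bigl(P_{e_2^\perp}(Z)\bigr),
\]
i.e.\ the inequality \eqref{eq:pfL1 counterexample equal volume} goes the \emph{wrong} way for every parallelepiped, no matter how you tilt it or choose $\delta(M)$. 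Your concrete bookkeeping also contains errors that mask this: the projection of the parallelepiped spanned by $Me_1+e_2,\ \delta e_2,\ e_3,\dots,e_n$ onto $e_1^\perp$ is not degenerate but the box $[0,(1+\delta)e_2]+[0,e_3]+\dots+[0,e_n]$ of volume $1+\delta$, so the left-hand side is $M\delta/(1+\delta)$, not $\approx M$; and for your final choice ($w_2=\delta e_1$) one gets LHS $=\delta$ and RHS $=M+\delta$. So the "only real calculation" you defer is exactly the one that refutes the plan.

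What is missing is a body that is genuinely non-zonotopal in the $(e_1,e_2)$-interaction. The paper takes the pyramid $K_3=\conv\{\underline{0},e_1,e_2,e_1+e_2,e_3\}$ over the unit square with apex $e_3$: then $\H^3(K_3)=1/3$, both coordinate projections $P_{e_1^\perp}(K_3)$ and $P_{e_2^\perp}(K_3)$ are triangles of area $1/2$, and the double projection is a unit segment, giving $2/3>1/2$; for $n\ge4$ one multiplies by a cube in the remaining coordinates (a product over $\{e_3,\dots,e_n\}$, which is harmless, unlike a product over the $(x_1,x_2)$-plane, which you correctly identified as hopeless). Note also that no large parameter is needed: a single fixed body suffices.
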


\begin{proof}
We start with $n=3$. Let $K_3=\conv\{\underline{0},e_1,e_2,e_1+e_2,e_3\}$. It is not difficult to verify that $K_3=\conv\{\underline{0},e_1,e_2,e_3\}\cup \conv\{e_1,e_2,e_1+e_2,e_3\}$ is a union of two simplices with disjoint interiors, and that the projection of $K_3$ onto $e_1^\perp$ (resp. onto $e_2^\perp$) is the two-dimensional simplex $\conv\{\underline{0},e_2,e_3\}$ (resp. $\conv\{\underline{0},e_1,e_3\}$). Therefore, direct computation provides
\[
\begin{array}{ccccc}
\H^3(K_3)&=&\H^3(\conv\{\underline{0},e_1,e_2,e_3\})+\H^3(\conv\{e_1,e_2,e_1+e_2,e_3\}) &=&1/3,\\
\H^2(P_{e_1^\perp}(K_3))& =&\H^2(\conv\{\underline{0},e_2,e_3\})&=&1/2,\\
\H^2(P_{e_2^\perp}(K_3))& =&\H^2(\conv\{\underline{0},e_1,e_3\})&=&1/2,\\
\H^1(P_{e_2^\perp}(P_{e_1^\perp}(K_3)))& =&\H^1(\conv\{\underline{0},e_3\})&=&1.
\end{array}
\]
This yields \eqref{eq:pfL1 counterexample equal volume} for $n=3$.

For $n \ge 4$, it is enough to put $K_n=K_3 \times \{0\}^{n-3} + \{0\}^3 \times [0,1]^{n-3}$. Then \eqref{eq:pfL1 counterexample equal volume} also applies to $K_n$, because $\H^n(K_n)=\H^3(K_3)$, $\H^{n-1}(P_{e_1^\perp}(K_n))=\H^{2}(P_{e_1^\perp}(K_3))$, $\H^{n-1}(P_{e_2^\perp}(K_n))=\H^2(P_{e_2^\perp}(K_3))$, and $\H^{n-2}(P_{e_2^\perp}(P_{e_1^\perp}(K_n)))=\H^1(P_{e_2^\perp}(P_{e_1^\perp}(K_3)))$.
\end{proof}

\begin{lemma}\label{lem:Pf  counterexample equal volume 2}
Let $n\geq 3$, let $K \in \K^n_n$ be an $n$-dimensional convex body, and, for $h > 0$, let $L_h=\left[\underline{0},h^{n-2} e_2\right] + \left[\underline{0},\frac{1}{h} e_3\right] + \ldots + \left[\underline{0},\frac{1}{h} e_n\right]$. Then
\[
\lim_{h \to \infty} \frac{\H^n(K+L_h)}{\H^{n-1}(P_{e_1^\perp}(K+L_h))}=\frac{\H^{n-1}(P_{e_2^\perp}(K))}{\H^{n-2}(P_{e_2^\perp}(P_{e_1^\perp}(K)))}.
\]
\end{lemma}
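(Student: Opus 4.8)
The plan is to write the box as $L_h = a\,[\underline{0},e_2] + b\sum_{i=3}^{n}[\underline{0},e_i]$ with $a = h^{n-2}\to\infty$ and $b = 1/h\to 0$, and to reduce everything to the elementary segment-addition identity: if $B$ is a compact convex subset of a $k$-dimensional linear subspace $V$ and $v\in V$ is a unit vector, then
\[
\H^k\bigl(B+t[\underline{0},v]\bigr) = \H^k(B) + t\,\H^{k-1}\bigl(P_{v^\perp}(B)\bigr)\qquad (t\ge 0),
\]
which follows from Fubini along the lines $y+\R v$, $y\in v^\perp$: the support $P_{v^\perp}(\cdot)$ of the fibration is unchanged, and over each point of $P_{v^\perp}(B)$ the fibre of $B$ simply grows in length by $t$.

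First I would treat the numerator. Setting $K_b := K + b\sum_{i=3}^{n}[\underline{0},e_i]$, one has $K+L_h = K_b + a[\underline{0},e_2]$, so the identity with $k=n$, $v=e_2$ gives $\H^n(K+L_h) = \H^n(K_b) + a\,\H^{n-1}\bigl(P_{e_2^\perp}(K_b)\bigr)$. As $P_{e_2^\perp}$ is linear and the segments $[\underline{0},be_i]$ with $i\ge 3$ lie in $e_2^\perp$, we get $P_{e_2^\perp}(K_b) = P_{e_2^\perp}(K) + b\sum_{i=3}^{n}[\underline{0},e_i]$, which converges to $P_{e_2^\perp}(K)$ in the Hausdorff metric as $b\to 0$; hence $\H^{n-1}(P_{e_2^\perp}(K_b))\to\H^{n-1}(P_{e_2^\perp}(K))$, while $\H^n(K_b)$ stays bounded. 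For the denominator, since $L_h\subset e_1^\perp$ we have $P_{e_1^\perp}(K+L_h) = P_{e_1^\perp}(K)+L_h$, an $(n-1)$-dimensional body inside $e_1^\perp$; with $Q_b := P_{e_1^\perp}(K) + b\sum_{i=3}^{n}[\underline{0},e_i]\subset e_1^\perp$ this equals $Q_b + a[\underline{0},e_2]$, and the same identity applied inside $e_1^\perp$ (with $k=n-1$, $v=e_2$, using that orthogonal projection within $e_1^\perp$ onto $e_1^\perp\cap e_2^\perp$ agrees with $P_{e_2^\perp}$ on subsets of $e_1^\perp$) gives $\H^{n-1}\bigl(P_{e_1^\perp}(K+L_h)\bigr) = \H^{n-1}(Q_b) + a\,\H^{n-2}\bigl(P_{e_2^\perp}(Q_b)\bigr)$, where $P_{e_2^\perp}(Q_b) = P_{e_2^\perp}(P_{e_1^\perp}(K)) + b\sum_{i=3}^{n}[\underline{0},e_i]\to P_{e_2^\perp}(P_{e_1^\perp}(K))$ and $\H^{n-1}(Q_b)$ stays bounded.

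To conclude, I would divide numerator and denominator by $a = h^{n-2}$ and let $h\to\infty$ (so $a\to\infty$, $b\to 0$): the terms $\H^n(K_b)/a$ and $\H^{n-1}(Q_b)/a$ tend to $0$ while the remaining terms converge, which yields the claimed value $\H^{n-1}(P_{e_2^\perp}(K))/\H^{n-2}(P_{e_2^\perp}(P_{e_1^\perp}(K)))$. The part requiring care — and the main, though mild, obstacle — is the bookkeeping of which projection lives in which subspace, together with checking that the limiting denominator is positive and that $\H^{n-2}(P_{e_2^\perp}(Q_b))$ is bounded away from $0$ uniformly in $b$: since $K\in\K^n_n$, the set $P_{e_1^\perp}(K)$ is full-dimensional in $e_1^\perp$, hence $P_{e_2^\perp}(P_{e_1^\perp}(K))$ is $(n-2)$-dimensional, and $P_{e_2^\perp}(Q_b)\supseteq P_{e_2^\perp}(P_{e_1^\perp}(K))$ for every $b>0$. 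Convergence of volumes under Hausdorff convergence of convex bodies is used throughout without further comment.
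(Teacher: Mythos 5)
Your proof is correct, and it reaches the same asymptotic conclusion as the paper by the same underlying strategy (the ratio is dominated by the long $e_2$-direction of the box $L_h$), but the technical implementation differs in a worthwhile way. The paper does not use the exact segment-addition identity $\H^k(B+t[\underline{0},v])=\H^k(B)+t\,\H^{k-1}(P_{v^\perp}(B))$; instead it sandwiches $K+L_h$ (and its projection onto $e_1^\perp$) between two explicit cylinders, namely $P_{e_2^\perp}(K)+[be_2,(h^{n-2}+a)e_2]\subset K+L_h\subset\bigl(P_{e_2^\perp}(K)+\tfrac1hQ\bigr)+[ae_2,(h^{n-2}+b)e_2]$ with $a=\min_{x\in K}\langle x,e_2\rangle$, $b=\max_{x\in K}\langle x,e_2\rangle$, and then squeezes the quotient. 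Your route replaces the two-sided inclusions by exact equalities $\H^n(K+L_h)=\H^n(K_b)+h^{n-2}\H^{n-1}(P_{e_2^\perp}(K_b))$ and its analogue inside $e_1^\perp$, so the limit drops out by dividing by $h^{n-2}$ and invoking continuity of volume under Hausdorff convergence as $b=1/h\to0$; this is arguably cleaner and makes the role of each term transparent, at the cost of having to justify the Fubini identity and to be careful (as you are) that the projection in the denominator is the one internal to $e_1^\perp$, which coincides with $P_{e_2^\perp}$ there because $e_1\perp e_2$. Your closing observations — that $\H^n(K_b)$ and $\H^{n-1}(Q_b)$ stay bounded for $h\ge1$ and that $\H^{n-2}(P_{e_2^\perp}(P_{e_1^\perp}(K)))>0$ since $K$ is full-dimensional — are exactly the points needed to make the division by $h^{n-2}$ legitimate, so there is no gap.
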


\begin{proof}
We put $Q=[\underline{0},e_3]+\ldots+[\underline{0},e_n]$, so that $L_h=\frac{1}{h}Q+\left[\underline{0},h^{n-2}e_2\right]$.

Let $a=\min\{\langle x,e_2 \rangle:\ x \in K\}$ and $b=\max\{\langle x,e_2 \rangle:\ x \in K\}$. Then, 
for $h$ large enough, we have 
\[
\begin{split}
K+L_h & \left\{
\begin{array}{l}
\subset \left(P_{e_2^\perp}(K)+\frac{1}{h}Q\right)+\left[ae_2,(h^{n-2}+b)e_2\right],\\
\supset P_{e_2^\perp}(K)+\left[be_2,(h^{n-2}+a)e_2\right],
\end{array}\right.\\
P_{e_1^\perp}(K+L_h) & \left\{
\begin{array}{l}
\subset \left(P_{e_1^\perp}(P_{e_2^\perp}(K))+\frac{1}{h}Q\right)+\left[ae_2,(h^{n-2}+b)e_2\right],\\
\supset P_{e_1^\perp}(P_{e_2^\perp}(K))+\left[be_2,(h^{n-2}+a)e_2\right].
\end{array}\right.
\end{split}
\]
Thus, we have
\[
\begin{split}
\H^n(K+L_h) & \left\{
\begin{array}{l}
\le \H^{n-1}\left(P_{e_2^\perp}(K)+\frac{1}{h}Q\right) \left((h^{n-2}+b)-a\right),\\
\ge \H^{n-1}\left(P_{e_2^\perp}(K)\right)\left((h^{n-2}+a)-b\right),
\end{array}\right.\\
\H^{n-1}\left(P_{e_1^\perp}(K+L_h)\right) & \left\{
\begin{array}{l}
\le \H^{n-2}\left(P_{e_1^\perp}(P_{e_2^\perp}(K))+\frac{1}{h}Q\right)\left((h^{n-2}+b)-a\right),\\
\ge \H^{n-2}\left(P_{e_1^\perp}(P_{e_2^\perp}(K))\right)\left((h^{n-2}+a)-b\right),
\end{array}\right.
\end{split}
\]
for $h$ large enough. In turn, we have
\[
\frac{\H^n(K+L_h)}{\H^{n-1}\left(P_{e_1^\perp}(K+L_h)\right)} \left\{
\begin{array}{l}
\le \frac{\H^{n-1}\left(P_{e_2^\perp}(K)+\frac{1}{h}Q\right) \left(h^{n-2}+(b-a)\right)}{\H^{n-2}\left(P_{e_1^\perp}(P_{e_2^\perp}(K))\right)\left(h^{n-2}-(b-a)\right)},\\
\ge \frac{\H^{n-1}\left(P_{e_2^\perp}(K)\right)\left(h^{n-2}-(b-a)\right)}{\H^{n-2}\left(P_{e_1^\perp}(P_{e_2^\perp}(K))+\frac{1}{h}Q\right)\left(h^{n-2}+(b-a)\right)}.
\end{array}\right.
\]
Passing to the limit as $h \to \infty$ completes the proof.
\end{proof}

\begin{proof}[Proof of Proposition~\ref{th: counterexample equal volume}]
We take $u=e_1$ and $K=K_n$, as in Lemma~\ref{lem:Pf  counterexample equal volume 1}. W.l.o.g. we assume $\H^{n-1}(P_{e_1^\perp}(K_n))=1$. Then we use the convex body
$L_h$ from Lemma~\ref{lem:Pf  counterexample equal volume 2} to get
\[
\lim_{h \to \infty} \frac{\H^n(K_n+L_h)}{\H^{n-1}(P_{e_1^\perp}(K_n+L_h))}=\frac{\H^{n-1}(P_{e_2^\perp}(K_n))}{\H^{n-2}(P_{e_2^\perp}(P_{e_1^\perp}(K_n)))}
<\frac{\H^n(K_n)}{\H^{n-1}(P_{e_1^\perp}(K_n))}
.
\]
Note also that, by construction, $\H^{n-1}(P_{e_1^\perp}(L_h))=1$.

We observe that $L_h$ is $(n-1)$-dimensional. Since we require to find two full-dimensional convex bodies, let $\tilde{L}_h:=\frac{1}{h}[\underline{0},e_1]+L_h$, which is a full-dimensional convex body for every $h$, such that $\H^{n-1}(P_{e_1^\perp}(\tilde{L}_h))=\H^{n-1}(P_{e_1^\perp}(L_h))=1$.

Note that we have
\[
\lim_{h \to \infty} \frac{\H^n(K_n+\tilde{L}_h)}{\H^{n-1}(P_{e_1^\perp}(K_n+\tilde{L}_h))}=\lim_{h \to \infty} \frac{\H^n(K_n+L_h)}{\H^{n-1}(P_{e_1^\perp}(K_n+L_h))},
\]
which yields that \eqref{ineq equal projec} fails for large $h$.
\end{proof}

\end{document}